\theoremstyle{plain}
\newtheorem{assumption}{Assumption}
\newtheorem{theorem}{Theorem}[section]
\newtheorem{lemma}[theorem]{Lemma}
\newtheorem{corollary}[theorem]{Corollary}
\theoremstyle{remark}
\newtheorem{definition}[theorem]{Definition}
\newtheorem{example}{Example}
\newtheorem*{remark}{Remark}
\DeclarePairedDelimiter\abs\lvert\rvert
\DeclarePairedDelimiter\norm\lVert\rVert
\newcommand{\NN}{\mathbb{N}}
\newcommand{\ZZ}{\mathbb{Z}}
\newcommand{\Ii}{\mathbb{I}}
\newcommand{\Zd}{{\ZZ^d}}
\newcommand{\vZ}{{v\in \Zd}}
\newcommand{\R}{\mathbb{R}}
\newcommand{\Rd}{{\R^d}}
\newcommand{\PP}{\mathbb{P}}
\newcommand{\EE}{\mathbb{E}}
\newcommand{\dd}{\mathrm{d}}
\newcommand{\Jz}{{J_z^{n}}}
\newcommand{\Hz}{{H_z^{n}}}
\newcommand{\dnm}{D_{n}^{-}}
\newcommand{\dnp}{D_{n}^{+}}
\newcommand{\Bb}{\mathcal{B}}
\newcommand{\I}[1]{1_{#1}}
\newcommand{\Pn}{{P_{n}}}
\newcommand{\Qn}{{Q_{n}}}
\newcommand{\Av}{{A_v^{n}}}
\newcommand{\Avm}{{A_v^{(m)}}}
\newcommand{\An}{{A_0^{n}}}
\newcommand{\Anm}{{A_0^{(m)}}}
\newcommand{\Bvm}{{B_v^{(m)}}}
\newcommand{\Bm}{{B^{(m)}}}
\newcommand{\bb}[1]{\boldsymbol{#1}}
\newcommand{\ii}{\mathrm{int}\,}
\newcommand{\cl}{\mathrm{cl}\,}
\newcommand{\vd}{\overset{v}{\to}}
\newcommand{\tJ}{\tilde J_z^{n,k}}
\newcommand{\CC}{\mathcal{C}}
\newcommand{\xu}{\overline{\bb x}_p}
\newcommand{\xl}{\underline{\bb x}_p}
\begin{document}

\begin{frontmatter}
\title{Extremes of regularly varying stochastic volatility fields}
\runtitle{Extremes of regularly varying stochastic volatility fields}

\begin{aug}
\author[A]{\fnms{Mads}~\snm{Stehr}\ead[label=e1]{mast.fi@cbs.dk}}
\and
\author[A]{\fnms{Anders}~\snm{R{\o}nn--Nielsen}\ead[label=e2]{aro.fi@cbs.dk}}
\address[A]{Department of Finance, Center for Statistics, Copenhagen Business School\printead[presep={,\ }]{e1,e2}}
\end{aug}

\begin{abstract}
We consider a stationary stochastic volatility field $Y_vZ_v$ with $v\in\mathbb{Z}^d$, where $Z$ is regularly varying and $Y$ has lighter tails and is independent of $Z$. We make---relative to existing literature---very general assumptions on the dependence structure of both fields. In particular this allows $Y$ to be non-ergodic, in contrast to the typical assumption that it is i.i.d., and $Z$ to be given by an infinite moving average.

Considering the stochastic volatility field on a (rather general) sequence of increasing index sets, we show the existence and form of a $Y$-dependent extremal functional generalizing the classical extremal index. More precisely, conditioned on the field $Y$, the extremal functional shows exactly how the extremal clustering of the (conditional) stochastic volatility field is given in terms of the extremal clustering of the regularly varying field $Z$ and the realization of $Y$.

Secondly, we construct two different cluster counting processes on a fixed, full-dimensional set with boundary of Lebesgue measure zero: By means of a coordinate-dependent upscaling of subsets, we systematically count the number of relevant clusters with an extreme observation. We show that both cluster processes converge to a Poisson point process with intensity given in terms of the extremal functional.
\end{abstract}

\begin{keyword}[class=MSC]
\kwd[Primary ]{60G60}
\kwd{60G70}
\kwd[; secondary ]{60G10}
\kwd{60G55}
\kwd{91-10}
\end{keyword}

\begin{keyword}
\kwd{Stochastic volatility models}
\kwd{extreme value theory}
\kwd{stationary random fields}
\kwd{regular variation}
\kwd{extremal clustering}
\kwd{extremal index}
\kwd{cluster point process}
\end{keyword}

\end{frontmatter}


\section{Introduction}
\label{sec:intro}

In this paper we consider, and study the extremal behavior of, a very general stochastic volatility field
\begin{equation}\label{eq:introfield}
	X_v = Y_v Z_v,
	\qquad v\in \Zd ,\qquad d\in\NN,
\end{equation}
which, in the case of $d=1$ plays an important role in econometrics and mathematical finance; see e.g. \cite{FinTimeSeries2009,HandbookVolMod2012} and references therein. Among many other applications they are used for option pricing, insurance mathematics and time series modeling. Allowing for a non-constant volatility can be seen as a very relevant generalization of the classical Black--Scholes setting, since many empirical studies have shown the necessity of this extension: Observed features of implied volatility such as the volatility smile can only be explained by a non-constant volatility; see e.g. \cite{Shephard2009}.

A main assumption in stochastic volatility models is that $(Y_v)$ and $(Z_v)$ are independent. This is in contrast to the popular one-dimensional, i.e. $d=1$, GARCH model, where one of the processes, say, $Z_v$ will feed into future values $Y_{v+k}$, $k\geq 1$ of the other. As both $(Z_v)$ and $(Y_v)$ will in practice be unobserved, the choice between GARCH models or stochastic volatility models will often depend on the varying technical difficulties due the desired application, e.g. estimation and prediction. As we consider a potentially multi-dimensional setting, only the stochastic volatility models have a natural generalization.

So far, most literature on the extremal behavior of stochastic volatility models has focused on the one-dimensional case $d=1$ and furthermore assumed that the volatility process, say $(Z_v)$, is a stationary $\log$-linear Gaussian process, and that the independent noise process $(Y_v)$ consists of i.i.d. random variables with either light- or heavy-tailed marginal distribution; see e.g. \cite{BreidtDavis1998,DavisMikosch2001,DavisMikosch2009,Kulik2011} and references therein. In particular, these scenarios imply that the stochastic volatility process $(X_v)$ does not exhibit extremal clustering no-matter the dependence structure of the underlying $\log$-Gaussian volatility. 
 
A, in some sense, more general set-up than the above is found in \cite{MikoschRezapour2013} where the authors allow the volatility process to have power law tails and then require that the independent noise process has a lighter tail. More specifically, they assume that $(Z_v)$ is a stationary, non-negative and regularly varying process of some index $\alpha>0$, and that the marginal distribution of the i.i.d. noise process $(Y_v)$ satisfy $\EE \abs{Y}^{\gamma}< \infty$ for some $\gamma>\alpha$. Based on the general result \cite[Theorem~2.7]{DavisHsing1995} concerning extremes of stationary processes, they show under standard mixing conditions of the volatility process that extremal clustering of the stochastic volatility process $(X_v)$ is in fact essentially inherited by its volatility process. 

The contribution of the present paper in the literature of stochastic volatility models is twofold. 
First, we consider the extremal behavior of the field $(X_v)$ on the $d$-dimensional discrete index set $D_n\subseteq \Zd$, which is allowed to expand in a very general way. Results on extremes of stationary random fields are to the best of the authors' knowledge mostly formulated under the assumption of $(D_n)$ being a sequence of increasing boxes (\cite{Jakubowski2019,Ling2019,Pereira2006,Soja2019}). However, there are recent papers in which extremes are considered under a more general geometric regime; see for instance \cite{Wintenberger2022,Pereira2017,RNielsenStehr2022}.
In this paper we simply assume that $D_n\subseteq \Zd$ can be approximated from the in- and outside by unions of certain slowly increasing boxes, where the approximations asymptotically have the same size as $D_n$; please see Assumption~\ref{ass:geometry} for details. 

Secondly, and perhaps most importantly, we allow for very general volatility and noise fields in the stochastic volatility model \eqref{eq:introfield}. We assume that the fields $(Y_v)_\vZ$ and $(Z_v)_\vZ$ are independent and stationary, however, in contrast to existing literature, we do not require any of them to be i.i.d. It is important to emphasize that we even allow the field $(Y_v)$ to be non-ergodic, which makes long and infinite memory possible within the framework; see the considerations on ergodicity, long memory and mixing in \cite[Chapters~2 \& 5]{Samorodnitsky2016}.
We require that $(Z_v)$ can be sufficiently approximated by a sequence of random fields $(Z_v^t)$, $t\in\NN$, all of which are assumed to be strongly mixing, regularly varying of index $\alpha>0$, and satisfy an anti-clustering condition in the spirit of e.g. \cite{DavisHsing1995,MikoschRezapour2013}. We also assume that $(Y_v)$ has lighter tails than $(Z_v)$ by requiring that $\abs{Y}^{\gamma}<\infty$ for some $\gamma>\alpha$; see Assumption~\ref{ass:randomfield} for details. Note that we do not assume any of the fields to be non-negative---also in contrast to existing literature. This allow us to model both of the mentioned scenarios from the literature where the volatility field is light-tailed (the volatility being $(Y_v)$) and heavy-tailed (the volatility being $(Z_v)$), respectively. In particular, the stochastic volatility field may in fact show extremal clustering inherited by the noise. This special case not present in the literature can for instance be obtained by having a stationary $\log$-Gaussian volatility field $(Y_v)$ and a regularly varying noise $(Z_v)$.

A stationary random field $(\xi_v)_{v\in\ZZ^d}$, that is regularly varying with index $\alpha$ (the precise definition of regular variation for random fields is given in Section~\ref{sec:prelim}), is said to have extremal index $\theta$ if for a sequence $(a_n)$ of norming constants chosen such that $\abs{D_n}^{-1} \sim \PP(\xi_0>a_n)$ as $n\to\infty$, it holds that
\[
\lim_{n\to\infty}\PP(\max_{v\in D_n}\xi_v\leq a_n x)=\exp(-x^{-\alpha}\theta)
\]
for all $x>0$. Here, $\abs{D_n}$ denotes the cardinality of $D_n$ and $\sim$ denotes asymptotic equivalence. See e.g. \cite{Embrecths1997,Leadbetter1983} for the general definition of the extremal index, beyond the regularly varying case.

Under the assumptions mentioned above on the index sets and fields, our first main result, Theorem~\ref{thm:maxtail}, states the existence (and form) of an \emph{extremal functional} $\eta:\R^{\Rd} \to \R$ satisfying
\begin{equation}\label{eq:intro1}
	\lim_{n\to\infty}
	\PP\bigl(
	\max_{v\in D_n} Y_v Z_v \le a_n x	
	 \mid (Y_v)=(y_v) \bigr)
	= \exp \Bigl( -x^{-\alpha}\eta((y_v)) \Bigr)
\end{equation}
for all $x>0$, where, again, $(a_n)$ is a sequence chosen such that $\abs{D_n}^{-1} \sim \PP(Z_0>a_n)$ as $n\to\infty$.
The term \emph{extremal functional} comes from the fact that, up to a finite and non-vanishing constant, the functional $\eta(\cdot)$ generalizes the classical extremal index. In fact we show that if $(Y_v)$ is just ergodic, $\eta(\cdot)=\eta$ is a constant related to the extremal index $\theta_X$ of the stochastic volatility field $(X_v)$ by
\[
	\theta_X = \frac{\eta}{\lim_{n\to\infty} \abs{D_n}\, \PP(X_0>a_n)} .
\]
We furthermore demonstrate how the extremal functional can be derived using the spectral distribution of $(Z_v)$ and the distribution of $(Y_v)$. 

It is well known in the literature that a regularly varying random field satisfying certain mixing and anti-clustering conditions has a well defined extremal index that furthermore is determined by its spectral distribution; see the seminal paper \cite{DavisHsing1995} for the one-dimensional regularly varying case and \cite{RNielsenStehr2022,Soja2019} for conditions ensuring an extremal index for random fields that are not necessarily regularly varying. In \cite{MikoschRezapour2013} a one-dimensional stochastic volatility model of the form $X_t=Y_tZ_t$ is studied, where $(Z_t)$ is regularly varying and $(Y_t)$ is i.i.d. Assuming that $(Z_t)$ is on the form of an exponential AR(1)-process, a finite moving average or given by a stochastic recurrence equation, it is shown
(under further model-specific assumptions)
that $(X_t)$ satisfies relevant mixing and anti-clustering conditions that ensures the existence of the extremal index. Furthermore, more concrete model-specific formulas for determining the index are given.

The present paper supplies a substantial generalization of the results from \cite{MikoschRezapour2013}: The i.i.d.-requirement on the $(Y_v)$-process is reduced to that of stationarity, and the field $(Z_v)$ simply needs to be suitably approximated by mixing fields obeying to an anti-clustering condition.

Only requiring stationarity of $(Y_v)$ instead of, e.g., ergodicity makes it possible for $(X_v)$ to have long or even infinite memory. This is reflected in the result, where we find a possibly random $(Y_v)$-dependent extremal index, when conditioning on $(Y_v)$. That means that the long/infinite memory coming from $(Y_v)$ in this modeling framework enters in a clear and specified way in the extremal behavior. Another example from the litterature, where the interplay between long memory and extremal theory is studied, can be found in \cite{ChenSamorodnitsky2020}. Here a stable random field with long memory is constructed, and a functional extremal theorem is derived with a limit that is not necessarily the Fr\'echet distribution.

We make assumptions on $(Z_v)$ requiring that it can be approximated by fields exhibiting a sufficient mixing and anti-clustering behavior. Although this set of assumptions at first may look elaborate and specific, it is in fact very general. It covers easily the important example, where $(Z_v)$ is given as an infinite moving average: The field can be approximated by finite moving averages, that clearly satisfies requirements on the dependence structure. On the other hand, the set of assumptions also makes solutions to stochastic recurrence equations, as e.g. the volatility process of a GARCH(1,1) model, possible to handle. Here $(Z_v)$ will in itself meet the necessary requirements regarding mixing and anti-clustering behavior.

As mentioned above, a usual proof technique when showing extremal results for stochastic volatility processes is to show that the full product process $(X_v)=(Y_vZ_v)$ satisfies certain mixing and anti-clustering conditions. The approach in the present paper will be rather different, and this is the reason for the increased generality of the results: We consider $(X_v)$ \emph{conditioned} on $(Y_v)=(y_v)$ and then use a spatial version of the Birkhoff--Khinchin theorem to handle the $(y_v)$-dependence. This makes it possible to only require (approximate) mixing and anti-clustering of $(Z_v)$.

A second set of results (see Section~\ref{sec:mainresultscluster}) concern the asymptotic distribution of a range of cluster counting processes. Here, we assume that $C\subseteq \Rd$ is a fixed, full-dimensional set with boundary of Lebesgue measure zero. We then consider the sequence of index sets $(D_n)$ given by
\begin{equation}\label{fml:multipliedset}
	D_n = (\bb c_n  C) \cap \Zd ,
\end{equation}
where $\bb c_n$ is a $d$-dimensional vector with each element tending to $\infty$, and $\bb c_n C$ denotes the coordinate wise multiplication of $C$ with the elements of $\bb c_n$. First we construct two cluster counting processes on the fixed set $C$ as follows: Let $\Phi_n \subseteq D_n$ denote the (random) set of indices $v\in D_n$ where $X_v=Y_v Z_v$ has an exceedance over the threshold $a_n x$ for fixed $x>0$, i.e.
\[
	\Phi_n = \{v \in D_n\::\: Y_vZ_v>a_n x\} .
\]
Then, according to some \emph{cluster rule}, we make a partition of $\Phi_n$ into a (random) number $\Gamma_n\in\NN \cup \{0\}$ of clusters $\CC_1^n, \dots, \CC_{\Gamma_n}^n$. For any measurable set $A\subseteq C$ we then construct the cluster counting processes by counting the number of clusters $\CC_i^n$ intersecting $\bb c_n A$, that is
\begin{equation}\label{eq:intro2}
	N_n(A) = \sum_{i=1}^{\Gamma_n} 
	\I{\{\CC_i^n \cap \bb c_n A \neq \emptyset\}} .
\end{equation}
The first cluster rule is a spatial version of the classical one-dimensional definition, e.g. seen in \cite{Hsing1988} and \cite{Leadbetter1983a}: We divide $\Zd$ into disjoint (increasing) 
boxes and count a box as a cluster if there is an $a_n x$-exceedance which is within the box and within $D_n$. Typically, for $d=1$, such a counting process is defined on the unit interval $(0,1]$ via a rescaling of the indices, where our geometric
construction in \eqref{fml:multipliedset} allow us to define it on a much more general set $C$. The second cluster counting process, similar to that of \cite{RNielsenStehr2022}, is relatively new in the context of extreme value theory. The process is based on the more intuitive rule that a cluster comprises of the indices in $\Phi_n$ which are relatively close to one another. 
Being close in this setting corresponds to being within a box of the same size as in the first cluster process. For both cluster rules we show that the counting process, conditioned on $(Y_v)=(y_v)$ converges (in the vague topology) towards a homogeneous Poisson process on $C$ with intensity $x^{-\alpha} \eta((y_v))$, where $\eta(\cdot)$ is the extremal functional appearing in \eqref{eq:intro1}; see \cite[Chapter~4]{Kallenberg2017} for details on vague convergence.

Related to the cluster counting process in \eqref{eq:intro2} we define a similar process defined on the original scale in $\Zd$,
\begin{equation*}
	L_n(A) = \sum_{i=1}^{\Gamma_n} 
	\I{\{\CC_i^n \cap A \neq \emptyset\}} 
\end{equation*}
for all $A \subseteq \Zd$. Utilizing the convergence above we show (for either cluster rule) the following joint convergence: 
If $(B_n^1),\dots,(B_n^G)$ are disjoint sequences of subsets of $D_n\subseteq\Zd$ each satisfying a slightly modified geometric assumption (see Assumption~\ref{ass:geometry2}), then, conditioned on $(Y_v)=(y_v)$,
\[
	\bigl( L_n(B_n^1),\dots, L_n(B_n^G) \bigr)
	\stackrel{\mathcal{D}}{\to} \bigl(L^1,\ldots,L^G\bigr) ,
\]
where $(L^1,\dots,L^G)$ are independent random variables each being Poisson distributed with parameter $x^{-\alpha}\eta((y_v)) \lim_{n\to\infty} \abs{B_n^g}/\abs{D_n}$ (provided that the limit exists). Here $\stackrel{\mathcal{D}}{\to}$ denotes weak convergence.

The paper is organized as follows. In Section~\ref{sec:prelim} we introduce some notation and relevant concepts needed to understand the assumptions and results of the paper. In Section~\ref{sec:mainresults} we formally introduce the stochastic volatility field and the geometric structures applied in the paper before we present results related to the existence and form of the extremal functional. Section~\ref{sec:mainresultscluster} is devoted to the results on convergence of various cluster counting processes, and Sections~\ref{sec:proofmaxresults}--\ref{sec:garch} are dedicated to proofs. In addition, the paper contains an appendix with various technical results.

\section{Preliminaries}
\label{sec:prelim}

Before presenting the first set of main results in the next section, we introduce relevant concepts and notation.

We let $\abs{{}\cdot{}}$ be a general size-measure understood as follows: $\abs{v}$ is the Euclidean norm of a single one- or multidimensional point $v$, $\abs{A}$ is Lebesgue measure of a full-dimensional set $A\subseteq \Rd$, and $\abs{A}$ is the number of points in a discrete set $A\subseteq \Zd$. Moreover, we let $\norm{{}\cdot{}}$ be the $\max$ norm associated to the Euclidean norm, that is, for a finite set $A \subseteq \Rd$ and a sequence of real numbers $(x_v)_{v\in A}$, 
\[
	\norm{(x_v)_{v\in A}} = \max_{v\in A} \,\abs{x_v} .
\]

For a number $y\in\R$ we let $y_+=y \I{\{y\ge 0\}}$ and $y_-=\abs{y} \I{\{y< 0\}}$ denote the positive and negative part, respectively. Moreover, for $\gamma>0$ we write $y_+^\gamma = (y_+)^\gamma$ and $y_-^\gamma = (y_-)^\gamma$.

We will use the following notation for a coordinate-wise scaling of a set $A \subseteq \Rd$: With $\bb{r}$ (in bold) denoting a vector of $d$ elements $\bb{r}=(r_1,\dots,r_d)$ we write
\[
	\bb{r} A = \{ (r_1 a_1,\dots,r_d a_d) \in \Rd\::\: (a_1,\dots,a_d)\in A\} . 
\]
That is, $\bb{r} A$ is a compact notation for the linear transformation of $A$ induced by the diagonal matrix with entries $r_1,\dots,r_d$. For such a vector $\bb r$, we let $\bb r^{-1}=1/\bb r$ be the vector of reciprocals, and
we let $\bb r^*\in\R$ denote the product of its entries, i.e. $\bb r^* = r_1\cdot\ldots\cdot r_d$. In particular we obtain for any full-dimensional set $A$ that $\bb{r}A$ has Lebesgue measure
\begin{equation*}
	\abs{\bb{r}A} =   \abs{\bb r^*}\, \abs{A} =  \abs[\Big]{\prod_{\ell=1}^d r_\ell}\, \abs{A}  .
\end{equation*}
Moreover, for two such vectors $\bb r$ and $\bb s$, we write $\bb r \le \bb s$ if $r_\ell \le s_\ell$ for each $\ell = 1,\dots,d$. Lastly, when considering a sequence $(\bb r_n)_{n\in\NN}$ of vectors $\bb r_n = (r_{n,1},\dots,r_{n,d})$, we write $\bb r_n \to \infty$ if $r_{n,\ell}\to \infty$ as $n\to\infty$ for all $\ell$. Similarly, $\bb r_n = o(\bb s_n)$ for two sequences of vectors if $r_{n,\ell}/s_{n,\ell} \to 0$ for each $\ell$.

For any $\vZ$ and $m\in\NN$ we define
\begin{align*}
	\Bvm 
	= \bigl(v+[-m,m]^d \bigr)\cap\Zd
	= \{z \in \Zd \::\: \norm{z-v} \le m \}
\end{align*}
and, in particular, we use the notation $\Bm = B^{(m)}_0$ where $0$ denotes the origin of $\Rd$. We say that a random field $(R_v)_{v\in\Zd}$ is regularly varying with index $\alpha>0$ if all finite subsets are regularly varying. This means that for all $m\in\NN$, defining $\bb{R}^m=(R_v)_{v\in B^{(m)}}$, there exists a random vector $\bb{\Theta}^{m}=(\Theta^m_v)_{v\in \Bm}$ with values in
\[
	\mathbb{S}^{\abs{\Bm}-1}
	=\{\bb{x}/\norm{\bb{x}}\::\: \bb{x}\in \R^{\Bm}\setminus\{\bb 0\}\},
\]
such that
\begin{equation}\label{eq:regvarmulti}
	\frac{\PP(\norm{\bb{R}^m}>xt,\bb{R}^m/\norm{\bb{R}^m}\in{}\cdot{} )}
	{\PP(\norm{\bb{R}^m}>t)}\stackrel{v}{\to}x^{-\alpha}\PP(\bb{\Theta}^{m}\in{}\cdot{})
\end{equation}
as $t\to\infty$. Here, $\stackrel{v}{\to}$ denotes \emph{vague convergence}; details on this type of convergence can be found in \cite[Chapter~4]{Kallenberg2017}. Typically, the distribution of $\bb{\Theta}^{m}$ is referred to as \emph{the spectral measure} of the field $\bb R^m$, and we will correspondingly refer to $\bb{\Theta}^{m}$ as the \emph{spectral field}.

In \cite{Wu2020} it is demonstrated that regular variation of a random field can equivalently be formulated as a property of the entire field and not just based on the family of finite dimensional fields. That would lead to a spectral random field $\bb{\Theta}=(\Theta_v)_{v\in\Zd}$, similar but not identical to the finite dimensional counterparts $\bb{\Theta}^m$. However, in our framework, where it assumed that $(Z_v)$ can be approximated by fields exhibiting less dependence, the extremal behavior of the approximating random fields will typically be determined from finitely many random variables. Therefore, the formulation using finite dimensional distributions is more suitable.

Lastly, we need the concept of strong mixing (also known as $\alpha$-mixing) of a random field $(R_v)_{v\in\Zd}$. We define it essentially as in \cite{Doukhan1994} however based on a coordinate-dependent distance (the vector $\bb h$ below). We refer to \cite{Doukhan1994} for general definitions, examples and relations between strong mixing and other mixing definitions. Let for all $A\subset\Zd$  the set $\sigma_R(A)=\sigma(R_v\::\: v\in A\}$ be the $\sigma$-field generated by all variables with an index in~$A$. Furthermore, for $\bb{h}=(h_1,\ldots,h_d)\in (0,\infty)^d$ we say that sets $A,B\in \Zd$ are $\bb{h}$-separated, if
\[
	\abs{b_\ell-a_\ell} > h_\ell ,\qquad \ell=1,\ldots,d
\]
for all points $(a_1,\ldots,a_d)\in A$ and $(b_1,\ldots,b_d)\in B$.

\begin{definition}[Strong mixing]
A random field $(R_v)_{\vZ}$ is said to be strong mixing with mixing parameters $(\alpha( \bb h))_{ \bb h \in (0,\infty)^d}$ in $(0,\infty)$, if for all $A,B\subset\Zd$, where $A$ and $B$ are $\bb h$-separated, it holds that
\[
	\sup_{\mathcal A \in \sigma_R(A),\mathcal B \in \sigma_R(B)}
	\abs[\big]{
	\PP(\mathcal A\cap \mathcal B)-\PP(\mathcal A)\PP(\mathcal B)
	}
	\le \alpha(\bb h),
\]
and $\alpha(\bb h)\to 0$, whenever $\bb h\to\infty$. 
\end{definition}

\section{Results on the extremal functional}
\label{sec:mainresults}

In this section we formally present the underlying assumptions of the paper after
which we can present the first set of main results.

For the sequence $(D_n)$ of index sets under consideration, we assume that they increase and can be sufficiently approximated by a union of slowly increasing boxes. To formally define this, let $\bb t_n=(t_{n,1},\ldots,t_{n,d})$ be a sequence in $\NN^d$ of scaling vectors, and let $x_n=(x_{n,1},\ldots,x_{n,d})$ with $x_{n,\ell}\le t_{n,\ell}$ for $\ell=1,\ldots,d$ be a sequence in $(0,\infty)^d$ of translation (or shift) vectors. For each $n\in\NN$, we then define the system of boxes $(J_z^{\bb t_n,x_n})_{z\in\Zd}$ by
\[
	J_z^{\bb t_n,x_n}
	=
	\Bigl( x_n + \bb t_n \bigl(z+[0,1)^d \bigr) \Bigr)
	\cap \Zd.
\]
For a sequence $(D_n)_{n\in\NN}$ of sets in $\Zd$ we define the approximations
\[
	D_{n,\bb t_n,x_n}^-
	=
	\bigcup_{J^{\bb t_n,x_n}_z\subseteq D_n}
	J^{\bb t_n,x_n}_z
	\qquad\text{and}\qquad 
	D_{n,\bb t_n,x_n}^+
	=
	\bigcup_{J^{\bb t_n,x_n}_z\cap D_n\neq \emptyset}
	J^{\bb t_n,x_n}_z,
\]
which obviously satisfy
\[
	D_{n,\bb t_n,x_n}^-
	\subseteq D_n
	\subseteq D_{n,\bb t_n,x_n}^+.
\]
As the choice of scalings $(\bb t_n)$ and translations $(x_n)$ will be given and fixed by Assumption~\ref{ass:geometry} below, we will write $J_z^n$, $D_n^-$ and $D_n^+$ for ease of notation.

Furthermore, we say that a sequence $(D_n)$ is regular if there exists an increasing sequence of boxes $D_n'$, i.e. on the form $\bigtimes_\ell[a_\ell,b_\ell]\cap \Zd$,
such that for all $n$ it holds that $D_n\subseteq D_n'$ and $\abs{D_n'}\leq c\,\abs{D_n}$ uniformly in $n$ for some finite constant $c$.
For a regular sequence $(D_n)$, we write $D_n\to\Zd$, if the corresponding sequence $(D_n')$ satisfies
\[
	\bigcup_{n=1}^\infty D_n'=\Zd.
\]
Note in particular that $\abs{D_n}\to\infty$ for a regular sequence $(D_n)$ satisfying $D_n\to\Zd$ as $n\to\infty$.

\begin{assumption}\label{ass:geometry}
The sequence of index sets $(D_n)_{n\in\NN}$ in $\Zd$ is regular with $D_n~\to~\Zd$, 
and there exist $(\bb t_n)$ and $(x_n)$ with $\bb t_n \to \infty$ and $\bb t_n^* = o(\abs{D_n})$,
such that
\begin{equation}\label{eq:geometricassumption}
	\frac{\abs{\dnp}-\abs{\dnm}}{\abs{D_n}} \to 0
\end{equation} 
as $n\to\infty$.
\end{assumption}
Assumption~\ref{ass:geometry} can be seen as a relaxation of the geometrical assumption on the sequence of index sets in \cite[Section~2]{RNielsenStehr2022}, where a certain type of convexity is required. We let $\preceq$ be an arbitrary translation invariant total order on $\Zd$ (e.g. the lexicographical order) and define
\[
\Av=\{z\in v+(\bb{t}_n[-1,1]^d\cap\Zd)\,:\,v\prec z\}
\] 
and
\[
\Avm=\{z\in v+([-m,m]^d\cap\Zd)\,:\,v\prec z\},
\]
see \cite{RNielsenStehr2022} for similar definitions.

\begin{assumption}\label{ass:randomfield}
We say that the stochastic volatility field $(Y_vZ_v)_\vZ$ satisfies Assumption~\ref{ass:randomfield} if all of the following conditions are satisfied:

The field $(Z_v)_\vZ$ is stationary and it satisfies the tail-balance condition
\begin{equation}\label{eq:tailbalance}
	\lim_{x\to \infty}	
	\frac{\PP(Z_0 < -x)}{\PP(Z_0>x)}
	= \frac{1-p}{p}
\end{equation}
for some $p \in (0,1]$. Furthermore, it can be approximated by a sequence of fields $(Z_v^t)_\vZ$, $t\in\NN$, with the following properties (some of which are relative to the geometry given by Assumption~\ref{ass:geometry}):
\begin{enumerate}[label=\normalfont(\roman*)]
	\item \label{eq:fieldass1}
	$(Z_v^t)_\vZ$ is stationary and regularly varying with index $ \alpha>0$,
	\item \label{eq:fieldass2}
	$(Z_v^t)_{\vZ}$ is strongly mixing with mixing parameters $(\alpha^t(\bb{h}))_{\bb{h}\in(0,\infty)^d}$ such that there exists a  sequence $(\bb \gamma_n^t)$ with $\bb \gamma_n^t\to\infty$ and $\bb \gamma_n^t=o(\bb t_n)$, where
\[
	\alpha^t(\bb \gamma_n^t)\, \abs{D_n}/\bb t_n^* \to 0,
\]
	\item \label{eq:fieldass3}
	$(Z_v^t)_{v\in \Zd}$ satisfies the anti-clustering condition 
\[
	\lim_{m\to\infty}\limsup_{n\to\infty}
	\PP\Bigl(
	\max_{\An\setminus \Anm} \abs{Z_v^t} > s a_n\mid \abs{Z_0^t}>sa_n \Bigr)=0
\]
for all $s>0$, where $\abs{D_n}^{-1}\sim \PP(Z_0^t>a_n)$,
	\item \label{eq:fieldass4}
	there exist $C_t^+>0$ and $C_t^->0$ such that
	\begin{equation}\label{eq:approx1}
		\lim_{x\to \infty} \frac{\PP(Z_0^t>x)}{\PP(Z_0>x)} = C_t^+
		\qquad\text{and}\qquad
		\lim_{x\to \infty} \frac{\PP(Z_0^t<-x)}{\PP(Z_0<-x)} = C_t^-,
	\end{equation}
	where $C_t^+$ and $C_t^-$ additionally satisfy	
	\begin{equation}\label{eq:C+C-}
		\lim_{t\to \infty} C_t^+ = \lim_{t\to \infty} C_t^-=1 ,
	\end{equation}
		\item \label{eq:fieldass5} and lastly
	\begin{equation}\label{eq:approx2}
	\begin{aligned}
		\liminf_{t\to\infty}\liminf_{x\to\infty} 
		\frac{
		\PP( Z_0^t>x, Z_0 > x)}{\PP(Z_0>x)} 
		=
		\liminf_{t\to\infty}\liminf_{x\to\infty} 
		 \frac{
		\PP( Z_0^t<-x, Z_0<-x)}{\PP(Z_0<-x)} 
		= 1 .
	\end{aligned}
	\end{equation}
\end{enumerate}

The field $(Y_v)_\vZ$ is stationary and independent of $(Z_v)$ and all the approximating fields $(Z_v^t)$, $t\in\NN$. Moreover
\begin{equation}\label{eq:gammaY}
		\exists\ \gamma > \alpha:\quad \EE \abs{Y_0}^\gamma < \infty .
\end{equation}
\end{assumption}

Note that \ref{eq:fieldass1} and \ref{eq:fieldass3} above imply that the right tail of $Z_v$, $\vZ$, is (marginally) regularly varying with index $\alpha$. In particular, there exists a sequence of norming constants $(a_n)_{n\in\NN}$ such that
\[
	\abs{D_n} \PP(Z_0>a_n x) \to x^{-\alpha}
\]
for all $x>0$ as $n\to\infty$. Since $Z_0$ is regularly varying this is equivalent to the asymptotic equality $\abs{D_n}^{-1} \sim \PP(Z_0>a_n)$. This particular sequence $(a_n)$ will be used in the results below, and with this choice of sequence, \ref{eq:fieldass3} will still be satisfied for each $(Z_v^t)$ because of \ref{eq:fieldass4}.

There are many cases in which certain elements of the assumption are obviously satisfied. For example, if the field $(Z_v)$ itself satisfies the regular variation, mixing, and anti-clustering required by $(Z_v^t)$, one can choose $Z_v=Z_v^t$ for all $\vZ$ and $t\in\NN$. Furthermore, if the approximating fields $(Z_v^t)$ are $m_t$-dependent for some $m_t$, that is, if the families $\{Z_v^t\::\ v\in A\}$ and $\{Z_v^t\::\ v\in B\}$ are independent for all $m_t$-separated sets $A,B$, then  
\ref{eq:fieldass2} and \ref{eq:fieldass3} are trivially satisfied. For instance, if $(Z_v)$ is so-called $\max$-$t$-approximable as introduced in \cite{Jakubowski2019}, the approximating fields are $2t$-dependent and furthermore \ref{eq:fieldass4}--\ref{eq:fieldass5} are satisfied.

\begin{example}\label{ex:movingaverage}
Let $(\xi_v)_{v\in \Zd}$ be an i.i.d. field of regularly varying random variables of index $\alpha>0$ which satisfy
\[
	\lim_{x\to \infty}	
	\frac{\PP(\xi_0 < -x)}{\PP(\xi_0>x)}
	= \frac{1-p_\xi}{p_\xi}
\]
for some $p_\xi \in (0,1]$. For a field of constants $(\psi_u)_{u\in\Zd}$ satisfying
\[
	\sum_{u \in \Zd} \abs{\psi_u}^{\delta} < \infty
\]
for some $\delta \in (0,\alpha) \cap (0,1]$, the moving average field $(Z_v)_\vZ$ given by
\begin{equation}\label{eq:movingaverage}
	Z_v = \sum_{u\in\Zd} \psi_u \xi_{v-u}
\end{equation}
satisfies Assumption~\ref{ass:randomfield} with approximating field $(Z_v^t)$ given as
\begin{equation*}
	Z_v^t = \sum_{\norm{u}\le t} \psi_u \xi_{v-u} 
\end{equation*}
for all $t\in\NN$.
That $(Z^t_v)$ satisfies \ref{eq:fieldass2} and \ref{eq:fieldass3} in Assumption~\ref{ass:randomfield} follows since it is $2t$-dependent. This example is described in details in Section~\ref{sec:movingaverage}.
\end{example}

\begin{example}\label{ex:garch}
In a one-dimensional setting, i.e. $d=1$, we consider the process $(Z_v)_{v\in \ZZ}$ defined recursively by
\begin{equation}\label{eq:garchdef}
Z_v^2=\alpha_0+Z_{v-1}^2(\alpha_1\xi_{v-1}^2+\beta_1),
\end{equation}
where $(\xi_v)_{v\in\ZZ}$ is an i.i.d. sequence with mean 0 and variance 1, and $\alpha_0$, $\alpha_1$ and $\beta_1$ are positive constants with $\alpha_1+\beta_1<1$. Though we apply $(Z_v)_{v \in \ZZ}$ directly in the modeling, this process is often used as the volatility process for a GARCH(1,1) process. That is, the process $(\tilde{X})_{v\in\ZZ}$ defined as
\[
	\tilde{X}_v=\xi_v Z_v.
\]
As opposed to our model (\ref{eq:introfield}), in the GARCH(1,1) process there is dependence between the factors defining $\tilde{X}_v$. 

Defining $A=\alpha_1\xi_1^2+\beta_1$, we assume that
\begin{enumerate}
\item  $\xi_1$ has a strictly positive density on $\R$,
\item $\EE\log A<0$,
\item there exists $\kappa_0$ such that $\EE A^{\kappa_0}>1$ and $\EE A^{\kappa_0}\log (A_+)<\infty$,
\item the solution (the solution exists; c.f. \cite[Remark~2.5]{Basrak2002}) $\alpha$ to the equation $\EE A^\alpha=1$ is not an even integer.
\end{enumerate}
Then, with results that are based on a result in \cite{Kesten1973}, it follows from \cite[Theorem~2.4, Remark~2.5, Theorem 3.1 and Corollary 3.5]{Basrak2002} that there exists a strictly stationary causal solution to the equation (\ref{eq:garchdef}), such that  $(Z_v)_{v\in\ZZ}$ is a regularly varying process with index $2\alpha$ that is strongly mixing (in fact with geometrically decaying rate). Furthermore, due to \cite[Lemma~4.3]{MikoschRezapour2013} it satisfies the anti-clustering condition (iii) from Assumption~\ref{ass:randomfield}. This example is described in further details in Section~\ref{sec:garch}. More generality but less clarity could have been obtained by simply requiring that $(Z_v)$ satisfies the conditions given in \cite{Kesten1973} and is furthermore strongly mixing.

More generally, a stochastic process $(Z_v)_{v\in\ZZ}$ given as the solution to a stochastic recurrence equation on the form
\[
	Z_v=A_vZ_{v-1}+B_v,
\]
where $((A_v,B_v))_{v\in\ZZ}$ form an i.i.d. sequence of non-negative variables, will be stationary, regularly varying, strongly mixing and satisfy the anti-clustering condition, if it satisfies the conditions of \cite{Kesten1973} and is strongly mixing with geometric rate; see \cite[Theorem~4.4]{MikoschRezapour2013}.
\end{example}

In the definition of the functional $\eta$ in Theorem~\ref{thm:maxtail} below, we use notation relating to the process $\bb{Y}=(Y_v)_{v\in\Zd}$ and its distribution. In particular, $\EE^{\bb{Y}}({}\cdot{}\mid \hat{\Ii})$ denotes the conditional expectation on the value space of all real-valued fields $(x_v)_{v\in\Zd}$ equipped with the natural Borel $\sigma$-algebra and the probability measure $\bb{Y}(\PP)$, conditioned on the translation (translations in all directions) invariant $\sigma$-algebra $\hat{\Ii}$. Additionally, $\hat{\bb{Y}}=(\hat{Y}_v)_{v\in\Zd}$ denotes the identity map. Details are given in Appendix~\ref{app:ergodictheorem}.

\begin{theorem}\label{thm:maxtail}
Let $(D_n)_{n\in\NN}$ be a sequence of sets in $\Zd$ satisfying Assumption~\ref{ass:geometry}. Let $(Y_vZ_v)_\vZ$ be a stochastic volatility field satisfying Assumption~\ref{ass:randomfield}, and choose the norming constants $(a_n)_{n\in\NN}$ such that $\abs{D_n}^{-1} \sim \PP(Z_0>a_n)$. Then, for almost all realizations $(y_v)$ of $(Y_v)$, it holds that
\begin{equation}\label{eq:main1}
	\lim_{n\to\infty}
	\PP\bigl(
	\max_{v\in D_n} Y_v Z_v \le a_n x	
	 \mid (Y_v)=(y_v) \bigr)
	= \exp \Bigl( -x^{-\alpha}\eta((y_v)) \Bigr)
\end{equation}
for all $x>0$. Here the functional $\eta:\R^{\Rd} \to [0,\infty)$ is given (and is well defined) by the limit
\[
	\eta((y_v))=\lim_{t\to\infty}\lim_{K\to\infty}\lim_{m\to\infty}\eta^{t,K,m}((y_v))
\]
with
\begin{equation*}
\begin{aligned}
	& \eta^{t,K,m}((y_v)) \\ & \quad 
	=\frac{\EE^{\bb{Y}}\Bigl[\EE\Bigl(\bigl((Y_0\I{\{\abs{Y_0}\le K\}}\Theta^{t,m}_0)_+^\alpha
	-(\underset{v\in A^{(m)}_0}{\max}Y_v\I{\{\abs{Y_v}\le K\}}\Theta^{t,m}_v)_+^\alpha\bigr)_+\mid\bb{Y}=\hat{\bb{Y}}\Big)\mid\hat{\Ii}\Bigr]((y_v))}{\EE(\Theta_0^{t,m})_+^\alpha},
\end{aligned}
\end{equation*}
where  $(\Theta^{t,m}_v)_{v\in B^{(m)}}$ represents the spectral field of $(Z^t_v)_{v\in B^{(m)}}$.
\end{theorem}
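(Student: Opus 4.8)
The plan is to establish \eqref{eq:main1} by a sequence of approximations, working throughout conditionally on $(Y_v) = (y_v)$ for a fixed realization in a set of full $\bb Y(\PP)$-measure. First I would reduce to the approximating fields: using conditions \ref{eq:fieldass4}--\ref{eq:fieldass5} of Assumption~\ref{ass:randomfield}, together with the independence of $\bb Y$ and all the $(Z_v^t)$, I would show that for each fixed $x>0$,
\[
	\limsup_{t\to\infty}\Bigl|\PP\bigl(\max_{v\in D_n} Y_v Z_v \le a_n x \mid (Y_v)=(y_v)\bigr)
	- \PP\bigl(\max_{v\in D_n} Y_v Z_v^t \le a_n x \mid (Y_v)=(y_v)\bigr)\Bigr|
\]
is negligible after taking $n\to\infty$; the point is that $\{Z_0 > a_n x\}$ and $\{Z_0^t > a_n x\}$ overlap in probability up to a factor tending to $1$ as $t\to\infty$, and the sign-balance \eqref{eq:tailbalance} combined with \eqref{eq:approx1} controls the negative tails. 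Separately I would truncate the multiplier, replacing $Y_v$ by $Y_v \I{\{\abs{Y_v}\le K\}}$: since $\EE\abs{Y_0}^\gamma < \infty$ with $\gamma > \alpha$, a union bound over $v \in D_n$ shows $\PP(\max_{v\in D_n} \abs{Y_v}\I{\{\abs{Y_v}>K\}}\,\abs{Z_v^t} > a_n x \mid (Y_v)=(y_v))$ vanishes as $n\to\infty$ and then $K\to\infty$, for a.e.\ $(y_v)$ — here one uses that the truncated moment of the tail of $Y$ is $o(1)$ in $K$ while the marginal tail of $Z_v^t$ is regularly varying of index $\alpha$, balanced against $\abs{D_n} \PP(Z_0^t > a_n) \to C_t^+$.

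Next, with $t$ and $K$ fixed, I would run the classical blocking argument of \cite{DavisHsing1995}-type on the mixing field $(Z_v^t)$, adapted to the general index sets via Assumption~\ref{ass:geometry}. Using the boxes $J_z^n$ of side $\bb t_n$ and the separating gaps of width $\bb\gamma_n^t = o(\bb t_n)$ from \ref{eq:fieldass2}, strong mixing lets me replace the probability of no exceedance over $D_n$ by a product over the (disjointly clipped) blocks inside $\dnp$; the error is of order $(\abs{D_n}/\bb t_n^*)\,\alpha^t(\bb\gamma_n^t) \to 0$, and the discrepancy between $\dnm$ and $\dnp$ is controlled by \eqref{eq:geometricassumption}. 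Since $\bb t_n^* = o(\abs{D_n})$, the number of blocks tends to infinity, so $\prod(1 - q_n) \approx \exp(-\sum q_n)$ where $q_n$ is the per-block exceedance probability given $(y_v)$. The exceedance probability in a single block, by the anti-clustering condition \ref{eq:fieldass3} and regular variation \ref{eq:fieldass1} of $(Z_v^t)$, asymptotically equals $\abs{D_n}^{-1} x^{-\alpha}$ times the local cluster functional
\[
	\frac{\EE\bigl[\bigl((Y_0\I{\{\abs{Y_0}\le K\}}\Theta^{t,m}_0)_+^\alpha - (\max_{v\in A^{(m)}_0} Y_v\I{\{\abs{Y_v}\le K\}}\Theta^{t,m}_v)_+^\alpha\bigr)_+ \mid \bb Y = (y_v)\bigr]}{\EE(\Theta_0^{t,m})_+^\alpha},
\]
which is the standard ``first exceedance in the block'' telescoping identity expressed through the spectral field, now carrying the frozen multiplier $y_v$; letting $m\to\infty$ recovers the full anti-clustering limit. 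Summing over the $\approx \abs{D_n}/\bb t_n^*$ blocks and using spatial stationarity of $(Z_v^t)$ together with the Birkhoff--Khinchin (ergodic) theorem of Appendix~\ref{app:ergodictheorem} to pass from the spatial average of $y$-dependent block contributions to $\EE^{\bb Y}(\,\cdot\mid\hat\Ii)$ evaluated at $(y_v)$, I obtain $\sum q_n \to x^{-\alpha}\eta^{t,K,m}((y_v))$ for a.e.\ $(y_v)$. Undoing the truncations ($K\to\infty$, then $m$ already sent, then $t\to\infty$) in the order dictated by the statement gives the claimed limit, provided the iterated limit defining $\eta$ exists — which I would verify en route by monotonicity in $K$ and $m$ (the truncated cluster functional is monotone) and by \eqref{eq:C+C-} for the $t$-limit.

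The main obstacle I anticipate is the interchange of the spatial ergodic averaging with the extremal blocking limit: the per-block contribution depends on $(y_v)$ through finitely many coordinates (a block of size $\sim\bb t_n$, then shrunk to the $A_0^{(m)}$-neighborhood), so one needs a version of the ergodic theorem along the scaled boxes $D_n'$ that is uniform enough to be combined with the $n\to\infty$ extremal estimates — in particular, handling the fact that the window over which $y$ is averaged is itself growing with $n$, and that the exceptional null set where Birkhoff fails must be chosen \emph{before} fixing $x$, $t$, $K$, $m$ (handled by taking a countable dense set of parameters and using monotonicity/continuity). A secondary technical point is making the block-clipping near $\partial D_n$ compatible with both the mixing gaps and the geometric approximation \eqref{eq:geometricassumption} simultaneously; this is where the regularity of $(D_n)$ (domination by boxes $D_n'$ with $\abs{D_n'}\le c\abs{D_n}$) is essential.
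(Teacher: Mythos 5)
Your proposal follows essentially the same route as the paper: approximate $(Z_v)$ by $(Z_v^t)$ via conditions \ref{eq:fieldass4}--\ref{eq:fieldass5}, truncate the multiplier at level $K$ using $\EE\abs{Y_0}^\gamma<\infty$, run a blocking/mixing argument on the boxes $J_z^n$ with gaps $\bb\gamma_n^t$ to get the exponential representation, localize via anti-clustering to the $A_v^{(m)}$-neighborhoods, identify the local limit through the spectral field, and handle the frozen $(y_v)$-dependence with the spatial Birkhoff--Khinchin theorem before undoing the truncations in the order $m$, $K$, $t$ (with existence of the iterated limit secured by monotonicity and \eqref{eq:C+C-}). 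The technical obstacles you flag — the interplay between the growing averaging window and the ergodic limit, choosing the null set over a countable parameter grid, and reconciling the block-clipping with \eqref{eq:geometricassumption} — are exactly the points the paper's Appendices~\ref{app:ergodictheorem}--\ref{app:Geometry} are devoted to.
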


Corollary~\ref{cor:maxtail2} below demonstrates that the extremal functional $\eta$ satisfies properties similar to properties met by the classical extremal index under sufficient mixing and anti-clustering conditions. See e.g. the discussions in \cite[Section~5]{Jakubowski2019}, \cite[Section~4]{Soja2019} and \cite[Section~4]{Wu2020} that all addresses extremal indices for random fields.

\begin{corollary}\label{cor:maxtail2}
Let the assumptions of Theorem~\ref{thm:maxtail} be satisfied. Then the functional $\eta:\R^{\Rd}\to [0,\infty)$ can equivalently be found as
\begin{equation*}
\begin{aligned}
	\eta((y_v)) & = \lim_{n\to\infty}  
	\sum_{\Jz \subseteq D_n}\PP\bigl(\max_{v \in \Jz} y_v Z_v > a_n\bigr) \\ &
	= \lim_{n\to\infty}
	\sum_{\Jz \cap D_n \neq \emptyset}\PP\bigl(\max_{v \in \Jz} y_v Z_v > a_n\bigr)  \\ &
	= \lim_{n\to\infty}
	\sum_{v\in D_n}\PP\bigl(\max_{u \in \Av} y_u Z_u \le a_n < y_v Z_v  \bigr)
	\end{aligned}
\end{equation*}
as $n\to\infty$.
\end{corollary}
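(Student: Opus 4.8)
The aim is to show that, for almost every realization $(y_v)$ of $(Y_v)$, all three sums converge to the common limit $\eta((y_v))$; I take as the starting point the blocking argument underlying the proof of Theorem~\ref{thm:maxtail}. There one replaces $D_n$ by its inner/outer box approximations $\dnm,\dnp$ at negligible cost (Assumption~\ref{ass:geometry}), factorizes the conditional maximum over the boxes $\Jz$ using the mixing of the approximating fields, and Taylor-expands, obtaining
\[
	\PP\bigl(\max_{v\in D_n}Y_vZ_v\le a_nx\mid (Y_v)=(y_v)\bigr)
	=\exp\Bigl(-\textstyle\sum_{\Jz\subseteq D_n}\PP\bigl(\max_{v\in\Jz}y_vZ_v>a_nx\bigr)+o(1)\Bigr)
\]
together with $\lim_{n}\sum_{\Jz\subseteq D_n}\PP(\max_{v\in\Jz}y_vZ_v>a_nx)=x^{-\alpha}\eta((y_v))$ a.s. Putting $x=1$ gives the first equality, so it remains to relate the second and third sums to the first.

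\emph{First $=$ second.} The difference of the two sums equals $\sum_{\Jz\cap D_n\neq\emptyset,\ \Jz\not\subseteq D_n}\PP(\max_{v\in\Jz}y_vZ_v>a_n)$, and since the tiles $\Jz$ partition $\Zd$ each such box lies in $\dnp\setminus\dnm$; bounding $\PP(\max_{v\in\Jz}y_vZ_v>a_n)\le\sum_{v\in\Jz}\PP(y_vZ_v>a_n)$, this difference is at most $\sum_{v\in\dnp\setminus\dnm}\PP(y_vZ_v>a_n)$. By the tail-balance condition \eqref{eq:tailbalance}, regular variation of $Z_0$, the choice $\abs{D_n}^{-1}\sim\PP(Z_0>a_n)$, and Potter bounds together with $\EE\abs{Y_0}^\gamma<\infty$ ($\gamma>\alpha$), the quantity $\abs{D_n}\PP(y_vZ_v>a_n)$ is, for all large $n$, dominated by an integrable stationary functional of $(y_v)$. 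Hence the spatial Birkhoff--Khinchin theorem along the regular sequences $(\dnm)$ and $(\dnp)$ (both tending to $\Zd$), combined with $\abs{\dnp}-\abs{\dnm}=o(\abs{D_n})$ from Assumption~\ref{ass:geometry}, forces $\abs{D_n}^{-1}\sum_{v\in\dnp\setminus\dnm}\abs{D_n}\PP(y_vZ_v>a_n)\to0$ a.s., so the first and second sums have the same limit.

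\emph{Second $=$ third.} For ``third $\le$ second'' observe that at most one index $v$ in a given box $\Jz$ can satisfy the record event $\{\max_{u\in\Av}y_uZ_u\le a_n<y_vZ_v\}$: if $v_1\prec v_2$ both lay in $\Jz$ then $v_2-v_1\in\bb t_n[-1,1]^d\cap\Zd$, so $v_2$ belongs to $\{z\in v_1+(\bb t_n[-1,1]^d\cap\Zd):v_1\prec z\}$, contradicting the record property of $v_1$; as every such $v\in D_n$ lies in a box that meets $D_n$ and carries an $a_n$-exceedance, summing gives the inequality. For the reverse, compare with the first sum: for $\Jz\subseteq D_n$ with $\max_{v\in\Jz}y_vZ_v>a_n$ let $v^\star$ be its $\succ$-largest exceedance; then either $v^\star$ is a record counted in the third sum, or there is a further exceedance at some $u\succ v^\star$ with $u-v^\star\in\bb t_n[-1,1]^d\cap\Zd$ and $u\notin\Jz$. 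The expected number of boxes of the second type tends to $0$: splitting according to whether $v^\star$ lies in the $m$-boundary layer of $\Jz$ or in its $m$-interior, the first case is bounded by $\sum_{v}\PP(y_vZ_v>a_n)$ over the fraction of $D_n$ within distance $m$ of a block boundary---a vanishing fraction for fixed $m$ since $\bb t_n\to\infty$---hence $o(1)$ by Birkhoff, while in the second case $u\in\An\setminus\Anm$ after recentering at $v^\star$, and, after truncating the factor $y$ at level $K$ (the remainder $\to0$ as $K\to\infty$ by $\EE\abs{Y_0}^\gamma<\infty$) and replacing $Z$ by $Z^t$ (the error $\to0$ as $n\to\infty$ then $t\to\infty$ by \eqref{eq:approx1}, \eqref{eq:C+C-} and \eqref{eq:approx2}), this term is controlled, uniformly in $v^\star$ by stationarity of $(Z^t_v)$, by the anti-clustering condition \ref{eq:fieldass3} applied with $s=1/K$. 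Letting $n\to\infty$, then $m\to\infty$, then $K\to\infty$, then $t\to\infty$---and using that the first sum built from $Z^t$ converges to $\lim_{K}\lim_{m}\eta^{t,K,m}((y_v))$, which tends to $\eta((y_v))$ as $t\to\infty$---yields $\liminf_n(\text{third})\ge\eta((y_v))$, which with ``third $\le$ second $=$ first'' closes the chain.

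The main obstacle is this lower bound on the third (``runs'') sum. In contrast to the one-dimensional $m$-dependent case, one must simultaneously handle boxes straddling $D_n$ in all $d$ coordinate directions, the truncation of the unbounded realization $(y_v)$, and the passage from $Z$ to its mixing approximants $Z^t$, and the four limits $n,m,K,t$ must be taken in exactly this order for condition \ref{eq:fieldass3} to be applicable. If the proof of Theorem~\ref{thm:maxtail} already establishes convergence of the runs sum to $\eta((y_v))$ as an intermediate step---which is natural, since that sum is the object most directly tied to the spectral field $(\Theta^{t,m}_v)$---this paragraph reduces to a reference to that step.
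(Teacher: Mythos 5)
Your handling of the first two sums is sound and essentially reproduces the paper's route: the block representation $\PP(\max_{v\in D_n}y_vZ_v\le a_n)=\exp\bigl(-\sum_{\Jz\subseteq D_n}\PP(M_{yZ}(\Jz)>a_n)\bigr)+o(1)$ is the paper's Lemma~\ref{lem:maxvsJkasse}, and the passage from $\{\Jz\subseteq D_n\}$ to $\{\Jz\cap D_n\neq\emptyset\}$ via the boundary layer $\dnp\setminus\dnm$ and the ergodic theorem is Lemma~\ref{lem:tailapprox}. Your ``at most one record per box'' observation is also exactly the paper's lower bound in Lemma~\ref{lem:JsetandAset}, so ``third $\le$ second'' is fine. (The paper's own proof of the corollary is just a citation of Theorem~\ref{thm:maxtail} together with Lemmas~\ref{lem:tailapprox} and~\ref{lem:JsetandAset}.)

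The gap is in your lower bound for the runs sum. You control the event ``the largest exceedance $v^\star$ of $\Jz$ has a companion exceedance in $A_{v^\star}^{n}\setminus A_{v^\star}^{(m)}$'' by truncating $y$ at $K$, replacing $Z$ by $Z^t$, and invoking the anti-clustering condition \ref{eq:fieldass3}. But \ref{eq:fieldass3} is assumed only for the approximating fields $(Z^t_v)$, and the proposed transfer to $(Z_v)$ does not close: the error from swapping $Z$ for $Z^t$ (and likewise from truncating $y$) inside the event $\{M_{yZ}(\Av\setminus\Avm)>a_n\}$ has to be union-bounded over the $\abs{\Av\setminus\Avm}\sim 2^d\,\bb t_n^*$ sites $u$, for each of the $\abs{D_n}$ centers $v$. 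By \eqref{eq:ratiobound2} each site contributes an error of order $K_t(\abs{y_u}^\gamma+1)/\abs{D_n}$, and each $u$ lies in $\Av$ for $\sim 2^d\,\bb t_n^*$ different $v$, so the total error is of order $K_t\,\bb t_n^*\to\infty$ for fixed $t$; there is no central-exceedance conditioning available to rescue this, since $Z_v$ and $Z_u$ are dependent. The paper avoids the problem by never applying anti-clustering to $(Z_v)$: condition \ref{eq:fieldass3} is used only for $(Z^t_v)$ with $y$ truncated from the outset (Lemma~\ref{lem:condanticluster}), whereas the runs representation for the full field $(Z_v)$ --- which is what the corollary requires --- is proved in Lemma~\ref{lem:JsetandAset} by declustering over the neighbouring boxes $\overline B_z^n$ using the approximate mixing of Lemma~\ref{lem:approxmixing}; there the $Z\leftrightarrow Z^{t_n}$ comparison is made on a single global event, so each site enters the error $\delta(t_n,n)$ exactly once and the total is $o(1)$. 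Your fallback of citing an already-established convergence of the runs sum is legitimate, but the needed statement is Lemma~\ref{lem:JsetandAset} with $R=Z$, not a by-product of the anti-clustering computations in the proof of Theorem~\ref{thm:maxtail}, which are carried out only for $(Z^t_v)$.
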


Next we consider the special case, where $(Y_v)$ is ergodic; see the precise definition in Appendix~\ref{app:ergodictheorem}. If the field $(Y_v)$ is ergodic, Theorem~\ref{thm:maxtail} provides the existence and specific form of the extremal index of the stochastic volatility field $(Y_v Z_v)$. To clarify this, we note that
\[
	\frac{\PP(Y_0 Z_0 >x)}{\PP(Z_0 > x)} \to
	\EE (Y_0)_+^\alpha + \frac{1-p}{p}\EE (Y_0)_-^\alpha
\]
as $x\to\infty$ (see for instance \cite{Breiman}), which in particular implies that
\begin{equation}\label{eq:tailofY0Z0}
	\abs{D_n}\, \PP(Y_0 Z_0>a_n x)
	\to x^{-\alpha} 
	\Bigl( \EE (Y_0)_+^\alpha + \frac{1-p}{p}\EE (Y_0)_-^\alpha \Bigr)
\end{equation}
as $n\to\infty$ for all $x>0$.

\begin{corollary}\label{cor:maxtailergodic}
Let the assumptions of Theorem~\ref{thm:maxtail} be satisfied, and assume in addition that the field $(Y_v)_\vZ$ is ergodic. Then $\eta^{t,K,m}((y_v))$ from Theorem~\ref{thm:maxtail} can be chosen as the $(y_v)$-independent constant
\[
	\eta^{t,K,m}=\frac{\EE\Bigl(\bigl((Y_0\I{\{\abs{Y_0}\le K\}}\Theta^{t,m}_0)_+^\alpha-(\max_{v\in A^{(m)}_0}Y_v\I{\{\abs{Y_v}\le K\}}\Theta^{t,m}_v)_+^\alpha \bigr)_+\Bigr)}{\EE(\Theta_0^{t,m})_+^\alpha},
\]
and, consequently, $\eta((y_v))=\eta$ can be chosen as the constant
\[
\eta=\lim_{t\to\infty}\lim_{K\to\infty}\lim_{m\to\infty}\eta^{t,K,m}.
\]
In particular, $(Y_v Z_v)_\vZ$ has extremal index $\theta$ given by
\[
	\theta = \frac{\eta}{\EE (Y_0)_+^\alpha + \frac{1-p}{p}\EE (Y_0)_-^\alpha } ,
\]
i.e.
\[
	\lim_{n\to\infty} 
	\PP\bigl(\max_{v\in D_n} Y_v Z_v \le a_n x \bigr)
	= \exp\Bigl( -x^{-\alpha} \bigl( \EE (Y_0)_+^\alpha + \tfrac{1-p}{p}\EE (Y_0)_-^\alpha \bigr)\, \theta \Bigr)
\]
for all $x>0$.
\end{corollary}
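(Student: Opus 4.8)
The plan is to derive Corollary~\ref{cor:maxtailergodic} from Theorem~\ref{thm:maxtail} by showing that ergodicity of $(Y_v)$ collapses the conditional expectation in the definition of $\eta^{t,K,m}$ to an unconditional one. Concretely, in the expression for $\eta^{t,K,m}((y_v))$ the inner expectation $\EE(\cdots\mid\bb Y=\hat{\bb Y})$ defines a measurable functional $g^{t,K,m}(\hat{\bb Y})$ of the field; when $(Y_v)$ is ergodic, the translation-invariant $\sigma$-algebra $\hat{\Ii}$ is trivial (up to $\bb Y(\PP)$-null sets), so $\EE^{\bb Y}[g^{t,K,m}(\hat{\bb Y})\mid\hat{\Ii}]=\EE^{\bb Y}[g^{t,K,m}(\hat{\bb Y})]=\EE[g^{t,K,m}(\bb Y)]$ almost surely, a constant. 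Spelling out $g^{t,K,m}$ and using the independence of $(Y_v)$ and $(\Theta^{t,m}_v)$, this constant is exactly
\[
	\eta^{t,K,m}=\frac{\EE\Bigl(\bigl((Y_0\I{\{\abs{Y_0}\le K\}}\Theta^{t,m}_0)_+^\alpha-(\max_{v\in A^{(m)}_0}Y_v\I{\{\abs{Y_v}\le K\}}\Theta^{t,m}_v)_+^\alpha \bigr)_+\Bigr)}{\EE(\Theta_0^{t,m})_+^\alpha},
\]
so that $\eta((y_v))=\lim_{t\to\infty}\lim_{K\to\infty}\lim_{m\to\infty}\eta^{t,K,m}=\eta$ is a constant for almost all $(y_v)$. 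This already gives the displayed limit in Theorem~\ref{thm:maxtail} with the constant $\eta$.

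Next I would translate the conditional statement \eqref{eq:main1} into the unconditional one. Since the right-hand side of \eqref{eq:main1} is now the deterministic value $\exp(-x^{-\alpha}\eta)$ for $\bb Y(\PP)$-almost all $(y_v)$, I integrate over $(y_v)$: writing $F_n(x)=\PP(\max_{v\in D_n}Y_vZ_v\le a_n x)=\EE[\PP(\max_{v\in D_n}Y_vZ_v\le a_n x\mid(Y_v))]$, the conditional probabilities are bounded by $1$ and converge pointwise (in $(y_v)$) to $\exp(-x^{-\alpha}\eta)$, so by dominated convergence $F_n(x)\to\exp(-x^{-\alpha}\eta)$ for every $x>0$.

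Finally I would rewrite this limit in the ``extremal index'' form. Using the Breiman-type tail equivalence \eqref{eq:tailofY0Z0}, $\abs{D_n}\PP(Y_0Z_0>a_nx)\to x^{-\alpha}\bigl(\EE(Y_0)_+^\alpha+\tfrac{1-p}{p}\EE(Y_0)_-^\alpha\bigr)$, so if we define the normalizing constants $\tilde a_n$ adapted to the marginal tail of $X_0=Y_0Z_0$ (i.e. $\abs{D_n}^{-1}\sim\PP(X_0>\tilde a_n)$), a standard regular-variation computation gives $a_n/\tilde a_n\to\bigl(\EE(Y_0)_+^\alpha+\tfrac{1-p}{p}\EE(Y_0)_-^\alpha\bigr)^{-1/\alpha}$, whence $\PP(\max_{v\in D_n}X_v\le\tilde a_n x)\to\exp\bigl(-x^{-\alpha}\theta\bigr)$ with $\theta=\eta/\bigl(\EE(Y_0)_+^\alpha+\tfrac{1-p}{p}\EE(Y_0)_-^\alpha\bigr)$; equivalently, in terms of $(a_n)$ directly, $\PP(\max_{v\in D_n}Y_vZ_v\le a_nx)\to\exp\bigl(-x^{-\alpha}(\EE(Y_0)_+^\alpha+\tfrac{1-p}{p}\EE(Y_0)_-^\alpha)\,\theta\bigr)$, which is the asserted identity. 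Since by definition (recalled in the introduction) a regularly varying stationary field has extremal index $\theta$ precisely when $\PP(\max_{v\in D_n}X_v\le\tilde a_nx)\to\exp(-x^{-\alpha}\theta)$ for the tail-matched constants, this establishes that $(Y_vZ_v)$ has extremal index $\theta$.

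The only genuinely delicate point is the first step: justifying that ergodicity of $(Y_v)$ makes $\hat{\Ii}$ trivial under $\bb Y(\PP)$ and that the measurable functional $g^{t,K,m}$ is integrable so the conditional expectation is well defined and equals its mean. Both are routine given the ergodic-theoretic setup recalled in Appendix~\ref{app:ergodictheorem} and the moment bound \eqref{eq:gammaY} (which, together with $\EE(\Theta^{t,m}_v)_+^\alpha<\infty$ on the compact sphere, controls the numerator), but it is where the argument must be stated carefully; everything afterwards is dominated convergence and a regular-variation bookkeeping that is entirely standard.
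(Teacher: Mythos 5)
Your proposal is correct and follows essentially the same route as the paper's proof: triviality of $\hat{\Ii}$ under ergodicity collapses $\eta^{t,K,m}((y_v))$ to the stated constant, dominated convergence upgrades the conditional limit \eqref{eq:main1} to the unconditional one, and the Breiman-type marginal asymptotics \eqref{eq:tailofY0Z0} yield the extremal-index form. Your extra care about the tail-matched norming constants $\tilde a_n$ is a slightly more explicit rendering of the same bookkeeping the paper leaves implicit.
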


\begin{corollary}\label{cor:mtdpendence}
Let the assumptions of Theorem~\ref{thm:maxtail} be satisfied, and assume in addition that the approximating fields $(Z_v^t)_\vZ$ are $m_t$-dependent for some $m_t$ for each $t\in\NN$. Then the results of Theorem~\ref{thm:maxtail} and Corollaries~\ref{cor:maxtail2}--\ref{cor:maxtailergodic} hold with
\[
	\eta((y_v))=\lim_{t\to\infty}\lim_{m\to\infty}\eta^{t,m}((y_v))
\]
where
\begin{align*}
	\eta^{t,m}((y_v)) 
	=\frac{\EE^{\bb{Y}}\Bigl[\EE\Bigl(\bigl((Y_0\Theta^{t,m}_0)_+^\alpha
	-(\underset{v\in A^{(m)}_0}{\max}Y_v \Theta^{t,m}_v)_+^\alpha\bigr)_+\mid\bb{Y}=\hat{\bb{Y}}\Big)\mid\hat{\Ii}\Bigr]((y_v))}{\EE(\Theta_0^{t,m})_+^\alpha} .
\end{align*}
\end{corollary}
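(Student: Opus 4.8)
The plan is to revisit the proof of Theorem~\ref{thm:maxtail} and observe that the only place where the innermost truncation level $K$ is needed is in controlling the contribution of the event $\{\abs{Y_v}>K\}$ to the conditional maximum, and that this control becomes unnecessary once the approximating fields are $m_t$-dependent. More precisely, in the general argument the parameter $K$ serves two purposes: first, to ensure that the relevant conditional exceedance probabilities involve only bounded functions of $(Y_v)$, so that the spatial Birkhoff--Khinchin theorem from Appendix~\ref{app:ergodictheorem} can be applied to the sums over boxes $\Jz\subseteq D_n$; and second, to guarantee integrability of the quantities inside $\eta^{t,K,m}$ via the moment bound $\EE\abs{Y_0}^\gamma<\infty$ with $\gamma>\alpha$. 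In the $m_t$-dependent case, $\Theta^{t,m}_v$ is $0$ whenever $\norm{v}>m+m_t$ (since $Z^t_v$ for such $v$ is independent of $Z^t_0$ and hence of the event $\norm{\bb R^m}$ is large in the right way), so for $m$ large relative to $m_t$ the spectral field is supported on a fixed finite box. I would therefore first check that for $m_t$-dependent approximating fields one may pass directly to a version of the argument in which the truncation $\I{\{\abs{Y_v}\le K\}}$ is simply absent, the relevant functionals being finite because $\EE(Y_0\Theta^{t,m}_0)_+^\alpha\le \EE\abs{Y_0}^\alpha\,\EE\norm{\bb\Theta^{t,m}}^\alpha<\infty$ by independence of $(Y_v)$ and $(Z^t_v)$ together with \eqref{eq:gammaY} and boundedness of the spectral field.

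Concretely, the steps I would carry out are the following. (1) Re-examine the decomposition of $\PP(\max_{v\in D_n}Y_vZ^t_v\le a_n x\mid (Y_v)=(y_v))$ used to prove Theorem~\ref{thm:maxtail}, localized to the approximating field $(Z^t_v)$: by the mixing/$m_t$-dependence and Assumption~\ref{ass:geometry}, this conditional probability is asymptotically $\exp(-x^{-\alpha}\sum_{\Jz\subseteq D_n}\PP(\max_{v\in\Jz}y_vZ^t_v>a_n))$ up to negligible errors, and the sum over boxes is handled by the ergodic theorem applied to the (now genuinely bounded, because of $m_t$-dependence and the resulting finite support of $\Theta^{t,m}$) single-box functional. (2) Identify the limit of a single normalized box probability $\bb t_n^*\PP(\max_{v\in\Jz}y_vZ^t_v>a_n)/\abs{D_n}$, using regular variation of the finite-dimensional distributions of $(Z^t_v)$ as in \eqref{eq:regvarmulti} and the anti-clustering/$m_t$-dependence to reduce the box maximum to an $m$-neighbourhood maximum; this produces exactly the functional $\eta^{t,m}((y_v))$ in the statement, after writing the box-maximum exceedance probability in the telescoping form $\sum_{v}\PP(\max_{u\in A^{(m)}_v}Y_uZ^t_u\le a_n<Y_vZ^t_v)$ analogous to the third expression in Corollary~\ref{cor:maxtail2}. (3) Let $m\to\infty$ and then $t\to\infty$, using \eqref{eq:approx1}--\eqref{eq:approx2} exactly as in the proof of Theorem~\ref{thm:maxtail} to replace $(Z^t_v)$ by $(Z_v)$ and to get convergence of $\eta^{t,m}((y_v))$ to $\eta((y_v))$; the monotonicity/Cauchy structure of these iterated limits is inherited verbatim from the original proof with the $K$-limit deleted. (4) Finally, observe that all of Corollaries~\ref{cor:maxtail2}--\ref{cor:maxtailergodic} were stated as consequences of Theorem~\ref{thm:maxtail} that are insensitive to which particular exhausting family $\eta^{t,K,m}$ or $\eta^{t,m}$ is used, so they transfer immediately once \eqref{eq:main1} is established with the new $\eta$.

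The main obstacle I anticipate is step (2): making rigorous the claim that, for $m_t$-dependent approximating fields, one can drop the inner truncation at level $K$ \emph{without} first taking $m\to\infty$, i.e. that the functional $\eta^{t,m}((y_v))$ (no $K$) is already well defined and is the correct limit of the normalized box probabilities. This requires a uniform integrability argument: one must show that $\EE(\max_{v\in A^{(m)}_0}Y_v\Theta^{t,m}_v)_+^\alpha<\infty$ and that the difference inside the positive part behaves well, which follows from $\abs{A^{(m)}_0}<\infty$ together with $\EE\abs{Y_0}^\alpha<\infty$ and the almost sure boundedness $\norm{\bb\Theta^{t,m}}\le 1$, but one must be careful that the conditional expectation $\EE(\,\cdot\mid\bb Y=\hat{\bb Y})$ and the outer ergodic conditional expectation $\EE^{\bb Y}(\,\cdot\mid\hat\Ii)$ interact correctly — this is precisely the point at which the original proof used boundedness from $K$ to invoke Fubini and the ergodic theorem, so here one instead invokes a dominated-convergence/truncation argument internally, bounding by $\abs{Y_0}^\alpha+\max_{v\in A^{(m)}_0}\abs{Y_v}^\alpha$ and using $\gamma>\alpha$. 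Once this integrability bookkeeping is in place, the rest is a transcription of the proof of Theorem~\ref{thm:maxtail} with the middle limit removed.
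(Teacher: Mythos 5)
Your overall strategy---rerun the proof of Theorem~\ref{thm:maxtail} with the $K$-truncation deleted---is the same as the paper's, but you have misdiagnosed where the truncation actually does its work, and as a result the one genuinely new argument that the corollary requires is missing from your plan. The parameter $K$ is not primarily there to make the limiting functional integrable or to make the ergodic theorem applicable: Lemma~\ref{lem:extremalindexgivenyfixedm} already identifies the limit of $\sum_{v\in D_n}\PP(M_{yR}(\Avm)\le a_nx<y_vR_v)$ for untruncated $(y_v)$ under only $\EE\abs{Y_0}^\gamma<\infty$, and the finiteness of $\eta^{t,m}$ is immediate from $\norm{\bb\Theta^{t,m}}\le1$ and $\abs{A_0^{(m)}}<\infty$, exactly as you note. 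The truncation is needed in Lemma~\ref{lem:condanticluster}: to pass from the box maximum to the $m$-neighbourhood maximum one must show
\begin{equation*}
\lim_{m\to\infty}\limsup_{n\to\infty}\sum_{v\in D_n}\PP\bigl(M_{\abs{yZ^t}}(\Av\setminus\Avm)>a_nx,\ \abs{y_vZ^t_v}>a_nx\bigr)=0,
\end{equation*}
and the only way Assumption~\ref{ass:randomfield}\ref{eq:fieldass3} (which is a statement about $(Z^t_v)$ alone) can be invoked is by first bounding $\abs{y_v}\le K$. For unbounded $(y_v)$ this transfer fails, so your step (2) (``the anti-clustering/$m_t$-dependence to reduce the box maximum to an $m$-neighbourhood maximum'') is exactly the gap, not the integrability bookkeeping you flag as the main obstacle.

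What closes the gap, and what the paper does, is the following: for $m\ge m_t$ the sets $\Av\setminus\Avm$ and $\{v\}$ are $m_t$-separated, so $M_{yZ^t}(\Av\setminus\Avm)$ and $y_vZ^t_v$ are \emph{independent}, the joint probability factorizes, and the sum is bounded by $\sum_{v\in D_n}\PP(y_vZ^t_v>a_nx)\sum_{u\in\Av}\PP(y_uZ^t_u>a_nx)$, which vanishes because the outer sum is $O(1)$ by Lemma~\ref{lem:ergodictheorem} while the inner sum runs over $O(\bb t_n^*)=o(\abs{D_n})$ indices. With this substitute for Lemma~\ref{lem:condanticluster} in hand, Lemmas~\ref{lem:Hsingconsequence} and \ref{lem:maxtailZt} go through verbatim with $K=\infty$, and Lemma~\ref{lem:extremalapprox1} finishes the proof. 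A secondary point: your assertion that $\Theta^{t,m}_v=0$ for $\norm{v}>m+m_t$ is vacuous (the spectral field is indexed only by $B^{(m)}$) and in any case irrelevant---e.g.\ for an i.i.d.\ field every coordinate of $\bb\Theta^{t,m}$ is nonzero with positive probability---so it cannot serve as the reason the truncation becomes dispensable.
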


\begin{corollary}\label{cor:Zisstrongmixing}
If the field $(Z_v)$ itself satisfies \eqref{eq:tailbalance} and \ref{eq:fieldass1}--\ref{eq:fieldass3} of Assumption~\ref{ass:randomfield}, and $(Y_v)$ is a stationary random field, independent of $(Z_v)$, satisfying \eqref{eq:gammaY}, then the results of Theorem~\ref{thm:maxtail} and Corollaries~\ref{cor:maxtail2}--\ref{cor:maxtailergodic} hold with
\[
	\eta((y_v))=\lim_{K\to\infty}\lim_{m\to\infty}\eta^{K,m}((y_v))
\]
and
\begin{align*}
	&\eta^{K,m}((y_v))\\
	&=\frac{\EE^{\bb{Y}}
	\Bigl[\EE\Bigl(\bigl((Y_0\I{\{\abs{Y_0}\le K\}}\Theta^{m}_0)_+^\alpha
	-(\max_{v\in A^{(m)}_0}Y_v\I{\{\abs{Y_v}\le K\}}\Theta^{m}_v)_+^\alpha \bigr)_+\mid\bb{Y}=\hat{\bb{Y}}\Bigr)\mid\hat{\Ii}\Bigr]((y_v))}
	{\EE(\Theta_0^{m})_+^\alpha},
\end{align*}
where $(\Theta^{m}_v)_{v\in B^{(m)}}$ represents the spectral field of $(Z_v)_{v\in B^{(m)}}$.
\end{corollary}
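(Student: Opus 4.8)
The plan is to apply Theorem~\ref{thm:maxtail} to the field $(Y_vZ_v)$ with the \emph{degenerate} approximating sequence obtained by setting $Z_v^t := Z_v$ for every $t\in\NN$ and all $\vZ$; this is exactly the situation flagged in the remark following Assumption~\ref{ass:randomfield}. Once we check that this choice makes Assumption~\ref{ass:randomfield} hold, the conclusion of the theorem applies, and the outer limit in $t$ in the formula for $\eta$ becomes vacuous, leaving precisely the claimed expression.

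First I would verify the hypotheses. The tail-balance condition \eqref{eq:tailbalance} is assumed outright. Properties \ref{eq:fieldass1}--\ref{eq:fieldass3} are assumed for $(Z_v)$ itself, hence hold for each $(Z_v^t)=(Z_v)$; in particular the norming sequence in \ref{eq:fieldass3} coincides with the one fixed in the theorem, since $\abs{D_n}^{-1}\sim\PP(Z_0^t>a_n)=\PP(Z_0>a_n)$. Property \ref{eq:fieldass4} holds with $C_t^+=C_t^-=1$, because the ratios in \eqref{eq:approx1} are identically $1$; consequently \eqref{eq:C+C-} is immediate. Likewise the ratios in \eqref{eq:approx2} are identically $1$, so \ref{eq:fieldass5} holds. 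Finally $(Y_v)$ is stationary, independent of $(Z_v)=(Z_v^t)$ for every $t$, and satisfies \eqref{eq:gammaY} by assumption. Thus Assumption~\ref{ass:randomfield} is met, relative to the geometry fixed by Assumption~\ref{ass:geometry} which is in force through the hypotheses of Theorem~\ref{thm:maxtail}, and the theorem applies.

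It remains to identify the limiting functional. Since $(Z_v^t)_{v\in B^{(m)}}=(Z_v)_{v\in B^{(m)}}$ for every $t$, the spectral field $(\Theta^{t,m}_v)_{v\in B^{(m)}}$ equals the spectral field $(\Theta^{m}_v)_{v\in B^{(m)}}$ of $(Z_v)_{v\in B^{(m)}}$ and does not depend on $t$. Hence $\eta^{t,K,m}((y_v))=\eta^{K,m}((y_v))$ is independent of $t$, so $\lim_{t\to\infty}\eta^{t,K,m}((y_v))=\eta^{K,m}((y_v))$, and substituting into the expression for $\eta$ in Theorem~\ref{thm:maxtail} yields $\eta((y_v))=\lim_{K\to\infty}\lim_{m\to\infty}\eta^{K,m}((y_v))$ with $\eta^{K,m}$ as stated. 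Corollaries~\ref{cor:maxtail2}--\ref{cor:maxtailergodic} then carry over unchanged, as each is formulated under the hypotheses of Theorem~\ref{thm:maxtail}, which we have just verified. There is essentially no genuine obstacle here; the one point deserving a word of care is that ``$(Z_v)$ satisfies \ref{eq:fieldass2}'' must be read as the strong-mixing requirement relative to the fixed scalings $(\bb t_n)$ of Assumption~\ref{ass:geometry}, so that the rate condition $\alpha^t(\bb\gamma_n^t)\abs{D_n}/\bb t_n^*\to 0$ is available with $\alpha^t=\alpha$.
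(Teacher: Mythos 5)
Your proposal is correct and is exactly the paper's argument: the paper's proof of this corollary consists of the single observation that all approximating fields $(Z_v^t)$ can be chosen identical to $(Z_v)$, which is precisely your degenerate choice. Your verification that \ref{eq:fieldass4}--\ref{eq:fieldass5} then hold trivially with $C_t^+=C_t^-=1$ and that the $t$-limit in $\eta$ becomes vacuous fills in the routine details the paper leaves implicit.
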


\setcounter{example}{0}
\begin{example}[Continued]
When $(Z_v)$ is the moving average defined by \eqref{eq:movingaverage} and $(Y_v)$ is ergodic and satisfies the requirements of Assumption~\ref{ass:randomfield}, the stochastic volatility field $(Y_v Z_v)$ has extremal index
\[
	\theta = \frac{\EE \bigl[
		\max_{\vZ} \bigl( p_\xi (Y_v\psi_v)_+^\alpha 
		+ (1-p_\xi) (Y_v\psi_v)_-^\alpha \bigr)_+
	\bigr]}{p\, \norm{\psi}_\alpha^\alpha},
\]
where
\begin{equation}\label{eq:normpsi}
	\norm{\psi}_\alpha^\alpha = \sum_{u\in\Zd} \abs{\psi_u}^\alpha
\end{equation}
and
\[
	p = \sum_{u\in\Zd} \bigl(p_\xi (\psi_u)_+^\alpha + (1-p_\xi) (\psi_u)_-^\alpha\bigr)/\norm{\psi}_\alpha^\alpha .
\]
The specific form of the extremal functional $\eta(\cdot)$ in the general case of a non-ergodic $(Y_v)$ can be found in Section~\ref{sec:movingaverage}.
\end{example}

\begin{example}[Continued]
In Section~\ref{sec:garch} it is demonstrated how Corollary~\ref{cor:Zisstrongmixing} can be applied to determine the extremal functional in the one-dimensional case, where $(Z_v)_{v\in\ZZ}$ is given by (\ref{eq:garchdef}).
\end{example}

\section{Results on cluster counting processes}
\label{sec:mainresultscluster}

In this section we consider only index sets $D_n$ given as follows:
Let $C\subseteq\Rd$ be a bounded set with non-empty interior $\ii C \neq \emptyset$ satisfying that the boundary has Lebesgue measure zero in the sense that $\abs{\cl C\setminus \ii C}=0$. Without loss of generality we may assume that $C$ has unit volume $\abs{C}=1$. Let $\bb c_n \to \infty$ and $\bb k_n\to\infty$ be any pair of sequences of vectors satisfying $\bb k_n = o(\bb c_n)$, and define
\begin{equation*}
	D_n = \bb c_n C \cap \Zd
\end{equation*}
for all $n\in\NN$. 
Let $\bb t_n$ be given by its entries $t_{n,\ell} = \lfloor c_{n,\ell} / k_{n,\ell} \rfloor$, where $\lfloor \cdot \rfloor$ is the floor function. Then the sequence $(D_n)$ satisfies Assumption~\ref{ass:geometry} with this choice of scalings $\bb t_n$ and arbitrary  translations $x_n$. In particular, the approximation of $D_n$ is constructed with boxes $\Jz\subseteq \Zd$ of side-lengths $t_{n,\ell}$.

As mentioned in the introduction, we will construct two counting processes,
$N_n$ and $\tilde N_n$, each counting the number of clusters of indices at which the stochastic volatility field $(Y_vZ_v)_\vZ$ exceeds the threshold $a_n x$.
Here, as in the previous section, the norming constants $(a_n)$ are chosen such that
\[
	\abs{D_n} \PP(Z_0>a_n) \to 1
	\qquad \text{or equivalently}
	\qquad
	\bb c_n^* \PP(Z_0>a_n) \to 1 
\]
as $n\to\infty$. According to a given cluster rule, both counting processes are constructed by making a partition of the random set
\[
	\Phi_n = \{ v \in D_n\::\: Y_v Z_v > a_n x\} 
\]
into a random number of clusters.

First we define the cluster measure $N_n$ on $C$ by counting the box $\Jz$ as a cluster if there is an $a_n x$-exceedance within $\Jz \cap D_n$, and if such an exceedance lies in the $\bb c_n$-upscaled set at hand,~i.e.
\begin{equation*}
	N_n(A) = \sum_{z\in\Zd} 
	\I{\{\Jz \cap \Phi_n \cap \bb c_n A \neq \emptyset\}}
\end{equation*}
for all $x>0$ and all $A\in\Bb(C)$. Note that this indeed has the general form introduced in \eqref{eq:intro2} with well-defined clusters $\Jz \cap \Phi_n$.

Secondly we define the cluster measure $\tilde N_n$ on $C$, which is based on the more intuitive interpretation of a cluster as being a collection of indices sufficiently close to one another at which there is an exceedance of the threshold $a_n x$. 
We then say that the elements $u_1,\dots,u_R \in \Phi_n$ form a cluster if (possibly after a reordering) it for all $i=2,\dots,R$ holds that
\[
	\norm{\bb t_n^{-1}(u_i-u_{i-1})} < 1 ,
\]
and if
\[
	\norm{\bb t_n^{-1}(u_i-v)} \ge 1 
\]
for all $i=1,\dots,R$ and all $v \in \Phi_n \setminus\{u_1,\dots,u_R\}$. Note that this cluster rule is comparable to the first one, as it simply collects all indices with an $a_n x$-exceedances which can be covered by an overlapping sequence of non-empty $J^n$-boxes.

If $\Phi_n \neq \emptyset$, this uniquely partitions $\Phi_n$ into $\Gamma_n\ge 1$ disjoint clusters $\CC_1^n,\dots,\CC_{\Gamma_n}^n$ (with arbitrary ordering). We now define the random point measure $\tilde N_n$ on $C$ as the number of clusters $\CC_i^n$ intersecting the $\bb c_n$-upscaled set at hand, i.e. 
\begin{equation*}
	\tilde N_n(A) = 
	\sum_{i=1}^{\Gamma_{n}}\I{\{\CC_i^n \cap \bb c_n A \neq \emptyset\}}
\end{equation*}
for all $A\in\Bb(C)$, with the convention that $\tilde{N}_n(A)=0$ if $\Phi_n=\emptyset$ (that is, $\Gamma_n=0$).

For the Theorems~\ref{thm:clusterconvergence} and \ref{thm:originalscale} below, we refer to Theorem~\ref{thm:maxtail} and Corollary~\ref{cor:maxtailergodic} for the definition of the extremal functional $\eta((y_v))$ and the corresponding constant $\eta$ in the case of ergodic $(Y_v)$. However, in a number of special cases, it may be more convenient to consult Corollaries \ref{cor:mtdpendence}--\ref{cor:Zisstrongmixing} for, in the relevant situation, equivalent definitions.

\begin{theorem}\label{thm:clusterconvergence}
Let $C\subseteq\Rd$ and $(D_n)_{n\in\NN}$ be as above, and let $(Y_v Z_v)_\vZ$ be a stochastic volatility field satisfying Assumption~2. For almost all realizations $(y_v)$ of $(Y_v)$ it holds that
\begin{equation}\label{eq:clusterconv1}
\begin{aligned}
	& N_n({}\cdot{}) \mid (Y_v)=(y_v) \vd N({}\cdot{};(y_v))
	\qquad\text{and} \\ &
	\tilde N_n({}\cdot{})\mid (Y_v)=(y_v) \vd N({}\cdot{};(y_v))
\end{aligned}
\end{equation}
as $n\to\infty$, where $N({}\cdot{};(y_v))$ is a homogeneous Poisson measure on $C$ with intensity $x^{-\alpha} \eta((y_v))$.

If additionally the field $(Y_v)$ is ergodic, then (unconditionally)
\begin{equation}\label{eq:clusterconv2}
	N_n({}\cdot{}) \vd N({}\cdot{})
	\qquad\text{and} \qquad
	\tilde N_n({}\cdot{})\vd N({}\cdot{})
\end{equation}
as $n\to\infty$, where $N({}\cdot{})$ is a homogeneous Poisson measure on $C$ with intensity $x^{-\alpha} \eta$.
\end{theorem}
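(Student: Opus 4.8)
The plan is to reduce both convergences to the two conditions of Kallenberg's theorem for convergence of a sequence of point processes to a simple Poisson process \cite[Chapter~4]{Kallenberg2017}: it suffices to exhibit a dissecting ring $\mathcal{R}\subseteq\Bb(C)$ generating $\Bb(C)$ such that, for almost every realization $(y_v)$ of $(Y_v)$ and every $A\in\mathcal{R}$,
\[
	\PP\bigl(N_n(A)=0\mid(Y_v)=(y_v)\bigr)\to\exp\bigl(-x^{-\alpha}\eta((y_v))\abs{A}\bigr)
	\qquad\text{and}\qquad
	\EE\bigl(N_n(A)\mid(Y_v)=(y_v)\bigr)\to x^{-\alpha}\eta((y_v))\abs{A},
\]
and the same for $\tilde N_n$; since $\abs{C}=1$, these limits are precisely $\PP(N(A)=0)$ and $\EE N(A)$ for the homogeneous Poisson measure $N$ of intensity $x^{-\alpha}\eta((y_v))$. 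I would take $\mathcal{R}$ to be the ring generated by bounded rectangles $[a_1,b_1]\times\cdots\times[a_d,b_d]\cap C$, all of which have Lebesgue-null boundary. The key point is that for $A\in\mathcal{R}$ the truncated index set $E_n^A:=\bb c_n A\cap\Zd=D_n\cap\bb c_n A$ again satisfies Assumption~\ref{ass:geometry} with the same scalings $\bb t_n$ (here $\abs{E_n^A}\sim\bb c_n^*\abs{A}$, and by exactly the argument showing $(D_n)$ is admissible, the fraction of boxes $\Jz$ straddling $\partial(\bb c_n A)$ vanishes because $\abs{\partial A}=0$), so that Theorem~\ref{thm:maxtail} and Corollary~\ref{cor:maxtail2} apply to the sequence $(E_n^A)_n$ --- and crucially with the \emph{same} functional $\eta$, since the defining formula for $\eta$ involves only the fields $(Y_v)$ and $(Z_v^t)$ and not the index sequence.

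Next, since every cluster under either rule is a subset of $\Phi_n$ and the boxes $\Jz$ tile $\Zd$, the zero-cell events coincide:
\[
	\{N_n(A)=0\}=\{\tilde N_n(A)=0\}=\{\Phi_n\cap\bb c_n A=\emptyset\}=\bigl\{\max_{v\in E_n^A}Y_vZ_v\le a_n x\bigr\}.
\]
Applying Theorem~\ref{thm:maxtail} to $(E_n^A)$ with its own norming constants $a_n^{(A)}$ (so that $\abs{E_n^A}^{-1}\sim\PP(Z_0>a_n^{(A)})$, whence $a_n^{(A)}/a_n\to\abs{A}^{1/\alpha}$ by regular variation) and writing the threshold as $a_n x=a_n^{(A)}\cdot(a_n x/a_n^{(A)})$ with $a_n x/a_n^{(A)}\to x\abs{A}^{-1/\alpha}$, a monotonicity-and-continuity squeeze in the exceedance level yields
\[
	\PP\bigl(\max_{v\in E_n^A}Y_vZ_v\le a_n x\mid(Y_v)=(y_v)\bigr)\to\exp\bigl(-(x\abs{A}^{-1/\alpha})^{-\alpha}\eta((y_v))\bigr)=\exp\bigl(-x^{-\alpha}\eta((y_v))\abs{A}\bigr)
\]
for almost all $(y_v)$, which is the first Kallenberg condition for both $N_n$ and $\tilde N_n$.

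For the mean condition, the same threshold bookkeeping applied to Corollary~\ref{cor:maxtail2} for the sequence $(E_n^A)$ (in the form with threshold $a_n x$, which the proof of Theorem~\ref{thm:maxtail} supplies) gives $\sum_{\Jz\subseteq E_n^A}\PP(\max_{v\in\Jz}y_vZ_v>a_n x)\to x^{-\alpha}\eta((y_v))\abs{A}$. For $N_n$ this sum differs from $\EE(N_n(A)\mid(Y_v)=(y_v))$ only by the contribution of boxes meeting both $\bb c_n A$ and its complement, of which there are $o(\bb c_n^*/\bb t_n^*)$ while each contributes $O(\bb t_n^*/\bb c_n^*)$, so $\EE(N_n(A)\mid(Y_v)=(y_v))\to x^{-\alpha}\eta((y_v))\abs{A}$. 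For $\tilde N_n$ I would instead use the third expression in Corollary~\ref{cor:maxtail2}: counting each cluster by its $\preceq$-minimal point, $\EE(\tilde N_n(A)\mid(Y_v)=(y_v))$ equals, up to the contribution of clusters straddling adjacent $J^n$-boxes, the sum $\sum_{v\in E_n^A}\PP(\max_{u\in\Av}y_uZ_u\le a_n x<y_vZ_v)$, and the anti-clustering condition~\ref{eq:fieldass3} --- applied exactly as in the proof of Theorem~\ref{thm:maxtail} --- shows that the straddling contribution is asymptotically negligible, so the same limit results. Having verified both Kallenberg conditions on the dissecting ring $\mathcal{R}$, and since a homogeneous Poisson measure on $C$ is simple, Kallenberg's theorem gives \eqref{eq:clusterconv1}. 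When $(Y_v)$ is ergodic, Corollary~\ref{cor:maxtailergodic} shows $\eta((y_v))=\eta$ is almost surely constant, so the conditional limit is deterministic; dominated convergence applied to $\EE[\EE(e^{-N_n(f)}\mid(Y_v))]$ for nonnegative continuous $f$ with compact support in $C$ then transfers the convergence to the unconditional law, which gives \eqref{eq:clusterconv2}, and likewise for $\tilde N_n$.

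The main obstacle is the mean condition for the second process $\tilde N_n$, i.e. showing that the two cluster rules produce the same limiting intensity. Concretely, one must establish that asymptotically no cluster straddles the boundary between two adjacent $J^n$-boxes, so that the $\preceq$-minimal points of clusters intersecting $\bb c_n A$ are essentially those lying in $\bb c_n A$ itself. This rests entirely on the anti-clustering condition~\ref{eq:fieldass3}, and the estimates required are of the same nature as those already developed for Theorem~\ref{thm:maxtail}; I expect this step to be reducible to variants of the lemmas used there rather than to require fundamentally new arguments.
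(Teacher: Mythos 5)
Your proposal is correct and follows essentially the same route as the paper: verify Kallenberg's void-probability and mean conditions on a dissecting ring by applying Theorem~\ref{thm:maxtail} and Corollary~\ref{cor:maxtail2} to the scaled index sets $\bb c_n A\cap\Zd$, which satisfy Assumption~\ref{ass:geometry} with the same boxes $\Jz$ (the paper keeps the threshold $a_n x$ throughout and lets the factor $\abs{A}$ emerge directly in the limiting sums, rather than renormalizing to $a_n^{(A)}$, but that is cosmetic). The only substantive variation is your treatment of $\tilde N_n$, whose mean you compute directly via cluster-representative points and the third formula of Corollary~\ref{cor:maxtail2}, whereas the paper couples $\tilde N_n$ with $N_n$ on an event $E_{m,n}$ of probability tending to one (following the proof of Theorem~5.2 of \cite{RNielsenStehr2022}) on which the two cluster counts coincide --- both routes rest on the same anti-clustering and boundary-box estimates.
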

In contrast to the cluster measures considered so far, which are defined on $C$ by means of a $\bb c_n$-upscaling, we can naturally also consider corresponding cluster counting measures defined on the original index set scale $\Zd$ as follows:
\begin{equation*}
	L_n(A) = \sum_{z\in \Zd} \I{\{\Jz \cap \Phi_n \cap A \neq \emptyset\}}
\end{equation*}
and
\begin{equation*}
	\tilde L_n(A)
	= \sum_{i=1}^{\Gamma_{n}}\I{\{\CC_i^n \cap A \neq \emptyset\}}
\end{equation*}
for $A \subseteq \Zd$. Using these processes, we can count the number of clusters in more general sequences of sets contained in $D_n = \bb c_n C \cap \Zd$. However, it turns out that we need a slight adjustment of the former geometrical Assumption~\ref{ass:geometry}. With the vector $\bb c_n$ in mind, we define for each $n,k\in\NN$ the side-lengths
\[
	\tilde t_{n,k}^\ell = \frac{c_{n,\ell}}{k^{1/d}}
\]
and collect them in the vector $\bb {\tilde t}_{n,k}$. Similarly as in Section~\ref{sec:mainresults} we then define
\[
	\tilde J_z^{n,k}
	=
	\bb{\tilde t}_{n,k} \bigl(z+[0,1)^d \bigr)	\cap \Zd
\]
for each $z\in\Zd$.

\begin{assumption}\label{ass:geometry2}
Let $(\bb c_n)$, $(D_n)$, 
and $(\tJ)$ be as above. The sequence $(B_n)_{n\in\NN}$ in $\Zd$ satisfies $B_n\subseteq D_n$ and $\abs{B_n}/\abs{D_n} \to b\ge 0$. Furthermore, if $b> 0$,  
\begin{equation*}
	\limsup_{n\to\infty}\,
	\abs[\big]{\{z\in\Zd\::\: \tJ \subseteq B_n\}}
	\sim b k
	\sim
	\liminf_{n\to\infty}\,
	\abs[\big]{\{z\in\Zd\::\: \tJ \cap B_n \neq \emptyset\}}
\end{equation*} 
as $k\to\infty$.
\end{assumption}

\begin{remark}
As shown in \cite[Lemma~A.2]{RNielsenStehr2022}, the assumption is in particular satisfied if, for all $n$, the set $B_n$ is the lattice-points of a so-called $p$-convex set; see their Definition~2.1. That is, the assumption holds if there is a $p\in\NN$ such that, for all $n$, there exists a set $\overline B_n \subseteq \Rd$ which is a union of (at most) $p$ convex bodies, and which satisfy that $B_n =  \overline B_n\cap \Zd$.
The assumption is also satisfied in the case where $B_n=(\bb b_n B) \cap \Zd$ where $B \subseteq C$ has non-empty interior and boundary of Lebesgue measure zero, and $(\bb b_n)$ is a sequence of vectors satisfying $\bb b_n \to \infty$ and $\bb b_n \le \bb c_n$ where $\lim_{n\to\infty} \bb b_n^*/\bb c_n^*\ge 0$ exists.
\end{remark}

\begin{theorem}\label{thm:originalscale}
Let $C\subseteq\Rd$ and $(D_n)_{n\in\NN}$ be as above, and let $(Y_v Z_v)_\vZ$ be a stochastic volatility field satisfying Assumption~2. 
Let $(B_n^1)_{n\in\NN},\dots,(B_n^G)_{n\in\NN}$ be sequences of sets in $\Zd$, each satisfying Assumption~\ref{ass:geometry2}, such that $B_n^g\subseteq D_n$ for all $n$ and $g$, and $B_n^1,\ldots,B_n^G$ are pairwise disjoint. Assume furthermore that
\[
	\lim_{n\to\infty}\frac{\abs{B_n^g}}{\abs{D_n}}=b_g \in [0,\infty)
\]
for each $g=1,\ldots,G$. Then, for almost all realizations $(y_v)$ of $(Y_v)$,
\begin{equation*}
	\bigl( L_n(B_n^1),\ldots,L_n(B_n^G)\bigr) \mid (Y_v)=(y_v)
	\stackrel{\mathcal{D}}{\to} 
	\bigl(L^1((y_v)),\ldots,L^G((y_v))\bigr)
\end{equation*}
and 
\begin{equation*}
	\bigl(\tilde{L}_n(B_n^1),\ldots,\tilde{L}_n(B_n^G)\bigr) \mid (Y_v)=(y_v)
	\stackrel{\mathcal{D}}{\to}
	\bigl(L^1((y_v)),\ldots,L^G((y_v))\bigr)
\end{equation*}
as $n\to\infty$, where $L^1((y_v)),\ldots,L^G((y_v))$ are independent random variables with each $L^g((y_v))$ being Poisson distributed with parameter $b_g x^{-\alpha}\eta((y_v))$. Here $b_g=0$ means that $\PP(L^g((y_v))=0)=1$. 

If additionally the field $(Y_v)$ is ergodic, then (unconditionally)
\begin{equation*}
\begin{aligned}
	&
	\bigl( L_n(B_n^1),\ldots,L_n(B_n^G)\bigr)
	\stackrel{\mathcal{D}}{\to} 
	\bigl(L^1,\ldots,L^G\bigr) \qquad\text{and}\\&
	\bigl(\tilde{L}_n(B_n^1),\ldots,\tilde{L}_n(B_n^G)\bigr)
	\stackrel{\mathcal{D}}{\to} 
	\bigl(L^1,\ldots,L^G\bigr)
\end{aligned}
\end{equation*}
as $n\to\infty$, where $L^1,\ldots,L^G$ are independent, Poisson distributed random variables with parameter $b_g x^{-\alpha}\eta$.
\end{theorem}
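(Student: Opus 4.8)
The plan is to deduce the theorem from the cluster--measure convergence already obtained in Theorem~\ref{thm:clusterconvergence}, via a rescaling identity, a monotone sandwich governed by Assumption~\ref{ass:geometry2}, and, in the ergodic case, a final integration over the realizations of $(Y_v)$. It suffices to prove the conditional statements: if for almost all $(y_v)$ the vectors $(L_n(B_n^1),\dots,L_n(B_n^G))\mid(Y_v)=(y_v)$ and $(\tilde L_n(B_n^1),\dots,\tilde L_n(B_n^G))\mid(Y_v)=(y_v)$ converge to the stated independent Poisson law, then under ergodicity Corollary~\ref{cor:maxtailergodic} gives $\eta((y_v))=\eta$ for almost all $(y_v)$, so the conditional limit law is one and the same fixed law; writing $\PP(\,\cdot\,)=\EE^{\bb Y}[\PP(\,\cdot\mid\bb Y)]$ and applying dominated convergence to the bounded conditional point masses then yields the unconditional convergence. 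So I would fix $(y_v)$ in the almost sure set on which Theorem~\ref{thm:clusterconvergence} holds for both $N_n$ and $\tilde N_n$ and argue conditionally on $(Y_v)=(y_v)$ from now on.

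Since $\Phi_n\subseteq\Zd$ and every cluster (under either rule) is a subset of $\Phi_n$, one has for all $B\subseteq D_n$ the elementary identities
\[
	L_n(B)=N_n(\bb c_n^{-1}B),\qquad \tilde L_n(B)=\tilde N_n(\bb c_n^{-1}B),
\]
with $\bb c_n^{-1}B\subseteq C$, and $L_n,\tilde L_n$ are non-decreasing and subadditive in their set argument. Fix $k\in\NN$ and, for each $g$, set $B_n^{g,-,k}=\bigcup_{\tJ\subseteq B_n^g}\tJ$, $B_n^{g,+,k}=\bigcup_{\tJ\cap B_n^g\neq\emptyset}\tJ$ and $V_n^k=\bigcup_{g}(B_n^{g,+,k}\setminus B_n^{g,-,k})$, so that
\[
	L_n(B_n^{g,-,k})\le L_n(B_n^g)\le L_n(B_n^{g,+,k})\le L_n(B_n^{g,-,k})+L_n(V_n^k),
\]
and likewise for $\tilde L_n$. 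After rescaling by $\bb c_n^{-1}$ the boxes $\tJ$ become the cubes of the fixed grid $\{k^{-1/d}(z+[0,1)^d):z\in\Zd\}$, only finitely many of which meet a neighbourhood of $C$. By Assumption~\ref{ass:geometry2} together with $\abs{B_n^g}/\abs{D_n}\to b_g$ (which supplies the complementary bounds), the rescaled sets $\bb c_n^{-1}B_n^{g,-,k}$ and $\bb c_n^{-1}B_n^{g,+,k}$ both have volume tending to $b_g$ as $k\to\infty$, so $\abs{\bb c_n^{-1}V_n^k}\to 0$; moreover, since distinct boxes $\tJ$ are non-empty, no cube lies inside two different $B_n^g$, whence $\bb c_n^{-1}B_n^{1,-,k},\dots,\bb c_n^{-1}B_n^{G,-,k},\bb c_n^{-1}V_n^k$ are pairwise disjoint.

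Now I would invoke Theorem~\ref{thm:clusterconvergence}: conditionally $N_n\vd N$ and $\tilde N_n\vd N$, with $N$ a homogeneous Poisson measure on $C$ of intensity $x^{-\alpha}\eta((y_v))$. As $N$ is diffuse, every finite union of grid cubes is almost surely an $N$-continuity set, and the rescaled families above take only finitely many values; passing to a subsequence of $n$ along which they are constant, the joint evaluations of $N_n$ (and of $\tilde N_n$) on the $G$ inner sets and on $\bb c_n^{-1}V_n^k$ converge to the corresponding values of $N$, which by disjointness are independent Poisson variables with means $x^{-\alpha}\eta((y_v))$ times the respective volumes. Feeding this into the chain of inequalities above and using $\bigcap_g\{L_n(B_n^g)\ge\ell_g\}\subseteq(\bigcap_g\{N_n(\bb c_n^{-1}B_n^{g,-,k})\ge\ell_g\})\cup\{N_n(\bb c_n^{-1}V_n^k)\ge 1\}$, one gets, for every $\ell_1,\dots,\ell_G\in\NN\cup\{0\}$, that $\liminf_n\PP(\bigcap_g\{L_n(B_n^g)\ge\ell_g\})$ is at least a product of Poisson upper-tail probabilities with parameters $x^{-\alpha}\eta((y_v))$ times the inner volumes, while $\limsup_n\PP(\bigcap_g\{L_n(B_n^g)\ge\ell_g\})$ is at most the analogous product plus $\PP(N(\bb c_n^{-1}V_n^k)\ge 1)\le x^{-\alpha}\eta((y_v))\,\abs{\bb c_n^{-1}V_n^k}$. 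Letting $k\to\infty$ and using continuity of the Poisson tail in its parameter, both bounds converge to $\prod_g\PP(\mathrm{Poisson}(b_g x^{-\alpha}\eta((y_v)))\ge\ell_g)$. Since laws on $(\NN\cup\{0\})^G$ are determined by their upper orthant probabilities, this is exactly the asserted joint convergence of $(L_n(B_n^g))_g$; the identical argument with $\tilde N_n$ handles $(\tilde L_n(B_n^g))_g$. When $b_g=0$ the sandwich is replaced by the first-moment bound $\EE[L_n(B_n^g)\mid (Y_v)=(y_v)]\le\sum_{v\in B_n^g}\PP(y_vZ_v>a_nx)$, which by regular variation of $Z_0$, the spatial Birkhoff--Khinchin averaging used elsewhere in the paper, and $\abs{B_n^g}/\abs{D_n}\to 0$ tends to $0$, forcing $L_n(B_n^g)\to 0$ in probability (and $\tilde L_n(B_n^g)\le L_n(B_n^g)$ as well). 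Undoing the reduction of the first paragraph gives the unconditional ergodic statement.

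The only genuinely delicate point is the interchange of the limits $n\to\infty$ and $k\to\infty$ while preserving the product structure: one must pass from the $n$-dependent sets $B_n^g$ to fixed finite unions of grid cubes, extract from Assumption~\ref{ass:geometry2} that the inner and outer cube approximations share the common limiting volume $b_g$ so that the discarded boundary cubes are asymptotically negligible, and only then invoke the Poisson limit of Theorem~\ref{thm:clusterconvergence} together with the disjointness of the inner approximations of the pairwise disjoint $B_n^g$ to obtain the asymptotic independence. Everything else --- the rescaling identities, monotonicity and subadditivity of $L_n$ and $\tilde L_n$, diffuseness of the Poisson limit, the first-moment bound in the degenerate case, and the passage from orthant probabilities to convergence in distribution --- is routine.
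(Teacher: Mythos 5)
Your argument is correct and is, as far as one can tell, precisely the proof the paper delegates to \cite[Theorem~7.1]{RNielsenStehr2022}: reduce $L_n,\tilde L_n$ to $N_n,\tilde N_n$ via the rescaling identity, sandwich each $B_n^g$ between inner and outer unions of $\tilde J_z^{n,k}$-boxes whose rescalings are (finitely many, eventually constant along subsequences) unions of fixed disjoint grid cubes, invoke the joint Poisson limit of Theorem~\ref{thm:clusterconvergence} on these continuity sets, and let $k\to\infty$, with the degenerate case $b_g=0$ handled by a first-moment bound. One small remark: the sandwich requires $\liminf_n$ of the inner box count and $\limsup_n$ of the outer box count to be $\sim b_g k$, whereas Assumption~\ref{ass:geometry2} as printed states the opposite pair ($\limsup$ of the inner, $\liminf$ of the outer), which already follows from $\abs{B_n^g}/\abs{D_n}\to b_g$; your parenthetical about the ``complementary bounds'' shows you are reading the assumption in the (clearly intended) stronger form, which is what the argument needs.
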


\section{Proof of results from Section~\ref{sec:mainresults}}
\label{sec:proofmaxresults}

In this section we present the proofs of Theorem~\ref{thm:maxtail} and  Corollaries~\ref{cor:maxtail2}--\ref{cor:Zisstrongmixing}, and thus the underlying assumptions are assumed satisfied throughout the section. That is, the index sets $(D_n)$ satisfy Assumption~\ref{ass:geometry} and the fields $(Z_v)$, $(Y_v)$ and $(Z_v^t)$, $t\in\NN$, satisfy the conditions specified in Assumption~\ref{ass:randomfield}. The results are proven by approximating the extremal behavior of $(y_v Z_v)$ with that of $(y_v Z_v^t)$ for a realization $(y_v)$ of $(Y_v)$, which, as seen in Lemma~\ref{lem:extremalapprox1}, is made possible due to the following lemma.

\begin{lemma}\label{lem:ratiobound}
Let $\gamma>\alpha$ be given. Then there exist $c>0$ and $x_0>0$ such that
\begin{equation}\label{eq:ratiobound}
	\frac{\PP( y Z_0>x)}{\PP(Z_0>x)} \le 
	c\, \bigl( \abs{y}^\gamma + \I{\{\abs{y}\le 1\}} \bigr)
\end{equation}
for all $x\ge x_0$ and all $y\in \R$. Additionally, for all $t\in\NN$ there exists $c_t>0$ such that
\begin{equation}\label{eq:ratioboundwitht}
	\frac{\PP( y Z_0^t>x)}{\PP(Z_0>x)} \le 
	c_t \bigl( \abs{y}^\gamma + \I{\{\abs{y}\le 1\}} \bigr)
\end{equation}
for $x\ge x_0$ and all $y\in \R$. Moreover, for all $t\in\NN$ there exist $K_t$ satisfying $K_t \downarrow 0$ and $x_t>0$ such that
\begin{equation}\label{eq:ratiobound2}
	\frac{\PP(y Z_0^t> x, y Z_0 \le x)}{\PP(Z_0>x)} \vee
	\frac{\PP(y Z_0> x, y Z_0^t \le x)}{\PP(Z_0>x)}
	\le K_t \bigl(\abs{y}^\gamma + \I{\{\abs{y}\le 1\}} \bigr)
\end{equation}
for all $x\ge x_t$ and $\abs{y} \le x/ x_t$.
\end{lemma}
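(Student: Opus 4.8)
The plan is to reduce all three estimates to Potter's bounds for regularly varying functions \cite{Embrecths1997}, used in tandem with the tail-balance condition \eqref{eq:tailbalance} and the comparison estimates \eqref{eq:approx1}--\eqref{eq:approx2}. As a preliminary observation, \ref{eq:fieldass1} and \ref{eq:fieldass4} together with \eqref{eq:tailbalance} make $x\mapsto\PP(Z_0>x)$, and hence $x\mapsto\PP(\abs{Z_0}>x)$, regularly varying of index $-\alpha$, with $\PP(\abs{Z_0}>x)\sim p^{-1}\PP(Z_0>x)$; likewise each $x\mapsto\PP(\abs{Z_0^t}>x)$ is regularly varying of index $-\alpha$ (it is a marginal tail of the regularly varying field $(Z_v^t)$), and $\PP(\abs{Z_0^t}>x)/\PP(Z_0>x)$ has the finite, strictly positive limit $C_t^++C_t^-(1-p)/p$. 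I fix a $\delta\in(0,\gamma-\alpha]$ once and for all.

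For \eqref{eq:ratiobound} I would start from the crude domination $\PP(yZ_0>x)\le\PP(\abs{Z_0}>x/\abs{y})$, valid for $y\neq0$, and split into three regimes of $y$: if $\abs{y}\le1$ the right-hand side is at most $\PP(\abs{Z_0}>x)\le2p^{-1}\PP(Z_0>x)$ for $x$ large; if $1<\abs{y}\le x/x_0$ (so that $x/\abs{y}$ is large), Potter's bound for $x\mapsto\PP(\abs{Z_0}>x)$ produces a factor $A\abs{y}^{\alpha+\delta}\le A\abs{y}^{\gamma}$; and if $\abs{y}>x/x_0$ one bounds the numerator by $1$ and, using $x<x_0\abs{y}$ and then Potter's bound for $x\mapsto\PP(Z_0>x)$, bounds the denominator below by $\PP(Z_0>x)\ge\PP(Z_0>x_0\abs{y})\ge A^{-1}\abs{y}^{-\gamma}\PP(Z_0>x_0)$. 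Choosing $c$ to be the largest constant produced and $x_0$ large enough gives \eqref{eq:ratiobound}. The bound \eqref{eq:ratioboundwitht} follows from the identical argument with $\abs{Z_0}$ replaced everywhere by $\abs{Z_0^t}$, using the two facts about $x\mapsto\PP(\abs{Z_0^t}>x)$ recorded above; the only novelty is that the Potter threshold for $\PP(\abs{Z_0^t}>x)$ and the threshold past which $\PP(\abs{Z_0^t}>x)\le c_t'\PP(Z_0>x)$ depend on $t$, so for the bounded range of $x$ above $x_0$ but below that threshold one absorbs a trivial estimate into the $t$-dependent constant $c_t$, using $\abs{y}^{\gamma}+\I{\{\abs{y}\le1\}}\ge1$.

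The genuinely substantive estimate is \eqref{eq:ratiobound2}. For $y>0$, putting $s=x/y$, one has $\{yZ_0^t>x,\,yZ_0\le x\}=\{Z_0^t>s,\,Z_0\le s\}$, and the identity $\PP(Z_0^t>s,Z_0\le s)=\PP(Z_0^t>s)-\PP(Z_0^t>s,Z_0>s)$ combined with $\PP(Z_0^t>s,Z_0>s)\le\PP(Z_0>s)$ gives, by \eqref{eq:approx1}, $\limsup_{s\to\infty}\PP(Z_0^t>s,Z_0\le s)/\PP(Z_0>s)\le C_t^+-\liminf_{s\to\infty}\PP(Z_0^t>s,Z_0>s)/\PP(Z_0>s)$; the right-hand side tends to $0$ as $t\to\infty$ by \eqref{eq:C+C-} and the upper-tail part of \eqref{eq:approx2}. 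Carrying out the same computation for the three remaining tail/sign combinations --- the event $\{Z_0>s,\,Z_0^t\le s\}$, and the two analogues with $<-s$ replacing $>s$, where \eqref{eq:tailbalance} is used to replace $\PP(Z_0<-s)$ by $\PP(Z_0>s)$ --- produces a sequence $\varepsilon_t\downarrow0$ and thresholds $x_t\ge x_0$ with the property that $\PP(yZ_0^t>x,\,yZ_0\le x)\vee\PP(yZ_0>x,\,yZ_0^t\le x)\le\varepsilon_t\,\PP(yZ_0>x)$ whenever $s=x/\abs{y}\ge x_t$, i.e. for $x\ge x_t$ and $\abs{y}\le x/x_t$. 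Dividing by $\PP(Z_0>x)$ and applying \eqref{eq:ratiobound} (legitimate since $x\ge x_0$) yields \eqref{eq:ratiobound2} with $K_t=c\,\varepsilon_t$, and replacing $K_t$ by $\sup_{s\ge t}K_s$ makes it non-increasing, hence $K_t\downarrow0$.

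The main obstacle is exactly this last step: the decay in $t$ is supplied by \eqref{eq:approx1}--\eqref{eq:approx2}, which control only the \emph{joint} tails $\PP(Z_0^t>s,Z_0>s)$ (and their lower-tail counterparts) for large $s$, whereas the dependence on $y$ is governed by the polynomial Potter bounds; the two are compatible precisely on the region $x\ge x_t$, $\abs{y}\le x/x_t$ that keeps $s=x/\abs{y}$ in the range where the $t$-decay is effective, and identifying this region correctly is the crux. The remaining chores --- handling the four tail/sign combinations and monotonizing $K_t$ --- are routine regular-variation bookkeeping. Let me finally remark that the right-hand side $\abs{y}^{\gamma}+\I{\{\abs{y}\le1\}}$ is integrable against the law of $Y_0$ by \eqref{eq:gammaY}, which is what makes these estimates usable once the deterministic $y$ is later replaced by $Y_0$.
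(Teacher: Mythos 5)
Your proposal is correct and follows essentially the same route as the paper: the first two bounds are the standard Potter/Karamata estimates combined with the tail-balance condition and \eqref{eq:approx1}, and for \eqref{eq:ratiobound2} you, like the paper, convert \eqref{eq:C+C-} and \eqref{eq:approx2} into a quantitative bound $\tilde K_t$ on $\PP(Z_0^t>s,Z_0\le s)/\PP(Z_0>s)$ (and its three tail/sign analogues) valid for $s=x/\abs{y}\ge x_t$, and then multiply by the already-established bound \eqref{eq:ratiobound} on $\PP(yZ_0>x)/\PP(Z_0>x)$. The only difference is cosmetic (Potter's bounds versus Karamata's representation theorem, and an explicit inequality in place of the paper's two-factor decomposition), so no further comment is needed.
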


\begin{proof}
The inequality \eqref{eq:ratiobound} follows by Karamata's representation theorem \cite[Theorem~A3.3]{Embrecths1997} for regularly varying functions combined with the tail balance condition \eqref{eq:tailbalance}; see for instance \cite[Eq.~(A.1)]{RNielsenStehr2022} for a similar bound. The inequality \eqref{eq:ratioboundwitht} follows by combining \eqref{eq:ratiobound} with \eqref{eq:tailbalance} and \eqref{eq:approx1}.

For the inequality \eqref{eq:ratiobound2} we first note that \eqref{eq:C+C-} and \eqref{eq:approx2} from Assumption~\ref{ass:randomfield} imply
\begin{align*}
	& \,\limsup_{t\to\infty}\limsup_{x\to\infty} 
	\frac{
	\PP( Z_0^t>x, Z_0 \le x)}{\PP(Z_0>x)}
	=\limsup_{t\to\infty}\limsup_{x\to\infty} 
	 \frac{
	\PP( Z_0^t<-x, Z_0 \ge -x)}{\PP(Z_0<-x)}
	 \\ 
	= &\, 	
	\limsup_{t\to\infty}\limsup_{x\to\infty} 
	 \frac{
	\PP( Z_0>x, Z_0 \le x)}{\PP(Z_0>x)}
	=	
	\limsup_{t\to\infty}\limsup_{x\to\infty} 
	 \frac{
	\PP( Z_0<-x, Z_0^t \ge -x)}{\PP(Z_0<-x)} \\
	= &\, 0 ,
\end{align*}
which is equivalent to the existence of $0<x_t \uparrow \infty$ and $\tilde K_t \downarrow 0$ such that all the fractions above are bounded by $\tilde K_t$ for any $x\ge x_t$. Let $t$ and consequently $\tilde K_t$ and $x_t$ be fixed. We show the inequality \eqref{eq:ratiobound2} for the first fraction of the left hand side only, as the bound for the second fraction follows identically.

The result is trivially true for $y=0$. Let $y\neq 0$, $x\ge x_t$ and write
\[
	\frac{\PP(y Z_0^t> x, y Z_0 \le x)}{\PP(Z_0>x)}
	=
	\frac{\PP(y Z_0^t> x, y Z_0 \le x)}{\PP(y Z_0>x)}
	\frac{\PP(yZ_0>x)}{\PP(Z_0>x)} .
\]
The first factor is bounded by $\tilde K_t$ for all $\abs{y} \le x/x_t$ by assumption, and the second factor is bounded by $c\, (\abs{y}^\gamma + \I{\{\abs{y}\le 1\}})$ for all such $y$, since we may assume that $x_t \ge x_0$. Defining $K_t=\tilde K_t \,c \downarrow 0$ yields the claim.
\end{proof}

\begin{lemma}\label{lem:extremalapprox1}
For almost all realizations $(y_v)$ of $(Y_v)$,
\begin{equation}\label{eq:extremalapprox1}
\limsup_{t\to\infty}\limsup_{n\to\infty}\abs[\Big]{
	\PP \bigl( \max_{v\in D_n} y_v Z_v \le a_n x \bigr) -
	\PP \bigl( \max_{v\in D_n} y_v Z_v^{t} \le a_n x \bigr)
	}
	= 0 
\end{equation}
for any $x>0$.
\end{lemma}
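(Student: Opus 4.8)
The plan is to bound the difference of the two probabilities by a union bound over the indices $v\in D_n$ at which the conditional processes $(y_vZ_v)$ and $(y_vZ_v^{t})$ disagree about crossing the level $a_nx$, to estimate each such term by Lemma~\ref{lem:ratiobound}, and then to sum over $v\in D_n$ using a spatial ergodic theorem for the stationary field $(Y_v)$.

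Concretely, set $A_n=\{\max_{v\in D_n}y_vZ_v\le a_nx\}$ and $B_n=\{\max_{v\in D_n}y_vZ_v^{t}\le a_nx\}$. Then $|\PP(A_n)-\PP(B_n)|\le\PP(A_n\setminus B_n)+\PP(B_n\setminus A_n)$, and $A_n\setminus B_n\subseteq\bigcup_{v\in D_n}\{y_vZ_v^{t}>a_nx,\,y_vZ_v\le a_nx\}$ with the symmetric inclusion for $B_n\setminus A_n$. By stationarity of $(Z_v,Z_v^{t})_{v\in\Zd}$, the quantity inside the absolute value in \eqref{eq:extremalapprox1} is therefore bounded by
\[
	\sum_{v\in D_n}\Bigl(\PP(y_vZ_0^{t}>a_nx,\,y_vZ_0\le a_nx)+\PP(y_vZ_0>a_nx,\,y_vZ_0^{t}\le a_nx)\Bigr).
\]
Next I would split each sum according to whether $|y_v|\le a_nx/x_t$ or $|y_v|>a_nx/x_t$, with $x_t$ and $K_t$ as in Lemma~\ref{lem:ratiobound}. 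Since $a_n\to\infty$, for all large $n$ we have $a_nx\ge x_t\ge x_0$, so on the first set \eqref{eq:ratiobound2} bounds the contribution by $2K_t\,\PP(Z_0>a_nx)\sum_{v\in D_n}\bigl(|y_v|^\gamma+\I{\{|y_v|\le1\}}\bigr)$, while on the second set \eqref{eq:ratiobound} and \eqref{eq:ratioboundwitht} bound it by $(c+c_t)\,\PP(Z_0>a_nx)\sum_{v\in D_n:\,|y_v|>a_nx/x_t}|y_v|^\gamma$, the indicator terms having dropped out once $a_nx/x_t>1$.

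To pass to the limit I would combine $\abs{D_n}\,\PP(Z_0>a_nx)\to x^{-\alpha}$ with the spatial Birkhoff--Khinchin theorem of Appendix~\ref{app:ergodictheorem}, applied to the stationary field $(Y_v)$ along the regular sequence $D_n\to\Zd$: for almost all realizations $(y_v)$, the average $\abs{D_n}^{-1}\sum_{v\in D_n}\bigl(|y_v|^\gamma+\I{\{|y_v|\le1\}}\bigr)$ converges to $\EE^{\bb{Y}}\bigl(|\hat Y_0|^\gamma+\I{\{|\hat Y_0|\le1\}}\mid\hat\Ii\bigr)((y_v))<\infty$ (finiteness using $\EE|Y_0|^\gamma<\infty$), and for every fixed $M\in\NN$ the truncated average $\abs{D_n}^{-1}\sum_{v\in D_n}|y_v|^\gamma\I{\{|y_v|>M\}}$ converges to $\EE^{\bb{Y}}\bigl(|\hat Y_0|^\gamma\I{\{|\hat Y_0|>M\}}\mid\hat\Ii\bigr)((y_v))$, which tends to $0$ as $M\to\infty$ by dominated convergence. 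These limiting quantities depend neither on $x$ nor on $t$, so a single exceptional null set serves all parameters. Hence the first piece has $\limsup_n$ at most $2K_tx^{-\alpha}\,\EE^{\bb{Y}}(\cdots\mid\hat\Ii)((y_v))$, which vanishes as $t\to\infty$ because $K_t\downarrow0$; and for the second piece, since $a_nx/x_t\to\infty$ we have, for every fixed $M$ and all large $n$, $\sum_{v\in D_n:\,|y_v|>a_nx/x_t}|y_v|^\gamma\le\sum_{v\in D_n:\,|y_v|>M}|y_v|^\gamma$, so its $\limsup_n$ is at most $(c+c_t)x^{-\alpha}\,\EE^{\bb{Y}}(|\hat Y_0|^\gamma\I{\{|\hat Y_0|>M\}}\mid\hat\Ii)((y_v))$, which is $0$ after letting $M\to\infty$. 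Taking $\limsup_t\limsup_n$ of the full bound then yields \eqref{eq:extremalapprox1}.

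The main obstacle is precisely the second group of indices: on the set where $|y_v|$ is of order $a_nx$ or larger, the refined estimate \eqref{eq:ratiobound2}, with its vanishing constant $K_t$, is unavailable, and one is left only with the crude bounds \eqref{eq:ratiobound}--\eqref{eq:ratioboundwitht}. Controlling their contribution forces one to know that the truncated spatial averages $\abs{D_n}^{-1}\sum_{v\in D_n}|y_v|^\gamma\I{\{|y_v|>M\}}$ are small uniformly in $n$ once $M$ is large---an ergodic ``uniform integrability along $D_n$'' statement---which is exactly where the strengthened moment hypothesis $\EE|Y_0|^\gamma<\infty$ with $\gamma>\alpha$ (rather than merely $\EE|Y_0|^\alpha<\infty$) enters.
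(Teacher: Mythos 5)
Your proposal is correct and follows essentially the same route as the paper's proof: the same union bound and stationarity reduction, the same split of $D_n$ according to whether $\abs{y_v}\le a_nx/x_t$, the same use of \eqref{eq:ratiobound2} with $K_t\downarrow 0$ on the first part and of \eqref{eq:ratiobound}--\eqref{eq:ratioboundwitht} with a fixed truncation level on the second, and the same appeal to the spatial ergodic theorem of Appendix~\ref{app:ergodictheorem}. Your explicit remark that a single null set handles all (countably many) truncation levels is a welcome clarification that the paper leaves implicit.
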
 

\begin{proof}
Note that the difference in \eqref{eq:extremalapprox1} is bounded by
\begin{align}
	\nonumber
	& \sum_{v\in D_n} \PP\bigl(y_v Z_v^t>a_n x,\, y_v Z_v \le a_n x \bigr)
	+ \sum_{v\in D_n} \PP\bigl(y_v Z_v>a_n x,\, y_v Z_v^t \le a_n x \bigr)\\ &
	\label{eq:differencebound}	
	=\, 
	\sum_{v\in D_n} \PP\bigl(y_v Z_0^t>a_n x,\, y_v Z_0 \le a_n x\bigr)
	+ \sum_{v\in D_n} \PP\bigl(y_v Z_0>a_n x,\, y_v Z_0^t \le a_n x\bigr) ,
\end{align}
and thus, to show \eqref{eq:extremalapprox1}, it suffices to show that  $\limsup$ (in $n$ and $t$) of each sum equals zero. We only consider the first sum in \eqref{eq:differencebound}, as the second sum is handled identically.

Fix $x>0$ and $\gamma>\alpha$ such that \eqref{eq:gammaY} is satisfied. For all $t$ let $x_t>0$ and $K_t$ from Lemma~\ref{lem:ratiobound} be given. Choose $n$ large enough such that $a_n x > x_t$. Then we obtain from Lemma~\ref{lem:ratiobound} eq. \eqref{eq:ratiobound2} that
\begin{align*}
	\MoveEqLeft	
	\sum_{v\in D_n} \PP(y_v Z_0^t>a_n x,\, y_v Z_0 \le a_n x) \I{\{\abs{y_v} \le a_n x / x_t\}} \\ &
	\le \frac{\abs{D_n}\PP(Z_0 > a_n x)}{\abs{D_n}}
	\sum_{v\in D_n} K_t \bigl(\abs{y_v}^\gamma+1 \bigr) \I{\{\abs{y_v} \le a_n x / x_t\}}  \\ &
	\le \frac{\abs{D_n}\PP(Z_0 > a_n x)}{\abs{D_n}}
	\sum_{v\in D_n} K_t \bigl(\abs{y_v}^\gamma+1 \bigr) \\ &
	\to x^{-\alpha} K_t\, \EE^{\bb{Y}}[\abs{\hat{Y}_0}^\gamma + 1\mid \hat{\Ii}]((y_v))
\end{align*}
as $n\to\infty$, 
with the convergence justified by Lemma~\ref{lem:ergodictheorem} of Appendix~\ref{app:ergodictheorem}. Secondly letting $t\to\infty$ we see that
\[
	\limsup_{t\to\infty} \limsup_{n\to\infty}
	\sum_{v\in D_n} \PP(y_v Z_0^t>a_n x,\, y_v Z_0 \le a_n x) \I{\{\abs{y_v} \le a_n x / x_t\}} = 0.
\]

Now let $b\ge 1$ be given, and choose $n$ such that $a_n x / x_t \ge b$. Then from \eqref{eq:ratioboundwitht} we
find that
\begin{align*}
	\MoveEqLeft	
	\sum_{v\in D_n} \PP\bigl( y_v Z_0^t>a_n x,\, y_v Z_0 \le a_n x \bigr) \I{\{\abs{y_v} > a_n x / x_t\}} \\ &
	\le \frac{\abs{D_n}\PP(Z_0>a_n x)}{\abs{D_n}}
	\sum_{v\in D_n} \frac{\PP(y_v Z_0^t > a_n x)}{\PP( Z_0 >a_n x)}\I{\{\abs{y_v} > b\}} \\ &
	\le \frac{\abs{D_n}\PP(Z_0>a_n x)}{\abs{D_n}}
	\sum_{v\in D_n} c_t\, \abs{y_v}^\gamma \I{\{\abs{y_v} > b\}} \\ &
	\to x^{-\alpha} c_t\, \EE^{\bb{Y}}[\abs{\hat{Y}_0}^\gamma \I{\{\abs{\hat{Y}_0}>b\}}\mid \hat{\Ii}]((y_v))
\end{align*}
as $n\to\infty$, again justified by Lemma~\ref{lem:ergodictheorem} of Appendix~\ref{app:ergodictheorem}. Letting $b\to\infty$ shows that
\[
	\limsup_{n\to\infty}
	\sum_{v\in D_n} \PP(y_v Z_0^t>a_n x,\, y_v Z_0 \le a_n x) \I{\{\abs{y_v} > a_n x / x_t\}} = 0,
\]
and \eqref{eq:extremalapprox1} follows.
\end{proof}

Next, we turn to finding a limit of $\PP \bigl( \max_{v\in D_n} y_v Z_v^{t} \le a_n x \bigr)$ as $n\to\infty$ leading up to the proof of Theorem~\ref{thm:maxtail} below. For this, we first show a series of lemmas which utilize certain technical results found in the appendix of the paper.

The following lemma is a consequence of the assumption that $(Z^t_v)$ satisfies the anti-clustering condition \ref{eq:fieldass3} of Assumption~\ref{ass:randomfield}. In this and subsequent lemmas the following notation will be useful: For each $K\in\NN$ we define $f^K,\overline{f}^K\,:\,\R\to\R$ by
\[
	f^K(y)=y\I{\{\abs{y}\leq K\}} 
	\qquad\text{and} \qquad 
	\overline{f}^K(y)=y\I{\{\abs{y}> K\}} .
\]
Moreover, here and in the remainder of the paper (including the appendix), we shorten $\max_{v\in A} R_v$ by $M_R(A)$ for any $A\subseteq \Zd$, where $(R_v)$ is a general field. 

\begin{lemma}
\label{lem:condanticluster}
Let $t\in\NN$ and $K\in\NN$. It holds for all deterministic fields $(y_v)$ that
\[
	\lim_{m\to\infty}\limsup_{n\to\infty}
	\sum_{v\in D_n}
	\PP\bigl(M_{\abs{f^K(y)Z^t}}(\Av\setminus \Avm)>a_n x,\,
	\abs{f^K(y_v)Z^t_v}>a_n x \bigr)
	=0
\]
for any $x>0$.
\end{lemma}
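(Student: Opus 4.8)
The plan is to strip away the deterministic, uniformly bounded factors $f^K(y_v)$ and then reduce the statement to the anti-clustering condition~\ref{eq:fieldass3} of Assumption~\ref{ass:randomfield} for $(Z^t_v)$. First, since $\abs{f^K(y_v)}\le K$ for every $v$, I would use the inclusions $\{M_{\abs{f^K(y)Z^t}}(\Av\setminus\Avm)>a_n x\}\subseteq\{M_{\abs{Z^t}}(\Av\setminus\Avm)>a_n x/K\}$ and $\{\abs{f^K(y_v)Z^t_v}>a_n x\}\subseteq\{\abs{Z^t_v}>a_n x/K\}$, so that each summand is bounded above by $\PP\bigl(M_{\abs{Z^t}}(\Av\setminus\Avm)>a_n x/K,\ \abs{Z^t_v}>a_n x/K\bigr)$, an expression in which the field $(y_v)$ has disappeared. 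For $n$ large enough that $m\le t_{n,\ell}$ for all $\ell$ --- which holds eventually since $\bb t_n\to\infty$ --- translation invariance of $\prec$ gives $\Av-v=\An$ and $\Avm-v=\Anm$, hence $(\Av\setminus\Avm)-v=\An\setminus\Anm$; then stationarity of $(Z^t_v)$ makes the last probability equal to $\PP\bigl(M_{\abs{Z^t}}(\An\setminus\Anm)>a_n x/K,\ \abs{Z^t_0}>a_n x/K\bigr)$, which no longer depends on $v$. Summing over the $\abs{D_n}$ indices $v\in D_n$ then bounds the sum in the lemma by
\[
	\abs{D_n}\,\PP\bigl(\abs{Z^t_0}>a_n x/K\bigr)\,
	\PP\bigl(M_{\abs{Z^t}}(\An\setminus\Anm)>a_n x/K \mid \abs{Z^t_0}>a_n x/K\bigr) .
\]

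Next I would pass to the limits in this product. By regular variation of $Z_0$, the normalization $\abs{D_n}^{-1}\sim\PP(Z_0>a_n)$, the tail-balance condition~\eqref{eq:tailbalance}, and~\eqref{eq:approx1}, the product $\abs{D_n}\,\PP(\abs{Z^t_0}>a_n x/K)$ converges as $n\to\infty$ to the finite constant $(x/K)^{-\alpha}\bigl(C_t^+ + \tfrac{1-p}{p}C_t^-\bigr)$. For the remaining conditional probability, condition~\ref{eq:fieldass3} --- which continues to hold for this choice of $(a_n)$ because of~\eqref{eq:approx1}, as noted after Assumption~\ref{ass:randomfield} --- applied with $s=x/K$ gives $\lim_{m\to\infty}\limsup_{n\to\infty}\PP\bigl(M_{\abs{Z^t}}(\An\setminus\Anm)>a_n x/K\mid\abs{Z^t_0}>a_n x/K\bigr)=0$. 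Since every factor is nonnegative and the product of the first two converges, taking $\limsup_{n\to\infty}$ and then $\lim_{m\to\infty}$ of the displayed bound yields $0$, which proves the lemma.

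I do not expect a genuine obstacle here: the whole argument is a truncation-and-stationarity reduction to~\ref{eq:fieldass3}. The two points that require a little care are that the cutoff level $K$ in $f^K$ is precisely what lets the deterministic factors be absorbed into the threshold (turning $a_n x$ into $a_n x/K$), so $K$ must be tracked through the estimate; and that the limits must be taken in the order $\limsup_{n\to\infty}$ before $\lim_{m\to\infty}$, both because that is the order appearing in~\ref{eq:fieldass3} and because the inclusion $\Anm\subseteq\An$ used in the reduction only holds once $n$ is large relative to $m$.
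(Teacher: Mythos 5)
Your proposal is correct and follows essentially the same route as the paper: bound each summand by replacing $f^K(y_v)$ with the uniform cutoff $K$ (turning the threshold into $a_n x/K$), use stationarity and translation invariance of $\prec$ to collapse the sum to $\abs{D_n}$ copies of the $v=0$ term, and conclude from the anti-clustering condition~\ref{eq:fieldass3} with $s=x/K$ together with the finiteness of $\lim_n \abs{D_n}\,\PP(\abs{Z_0^t}>a_n x/K) = K^{\alpha}x^{-\alpha}\bigl(C_t^+ + C_t^-\tfrac{1-p}{p}\bigr)$. No gaps.
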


\begin{proof}
Write
\begin{align*}
	\MoveEqLeft
	\sum_{v\in D_n}
	\PP \bigl(M_{\abs{f^K(y)Z^t}}(\Av\setminus\Avm)>a_n x,\, \abs{f^K(y_v)Z^t_v}>a_n x \bigr)\\ &
	\le \sum_{v\in D_n}
	\PP\bigl(M_{\abs{Z^t}}(\Av\setminus\Avm)>a_n x/K,\, 
	\abs{Z^t_v}>a_n x/K \bigr)\\ &
	= \abs{D_n}\,
	\PP \bigl(M_{\abs{Z^t}}(\An\setminus\Anm)>a_nx/K \mid \abs{Z^t_0}>a_nx/K\bigr)\,
	\PP\bigl(\abs{Z^t_0}>a_n x/K \bigr).
\end{align*}
Now the result follows from Assumption~\ref{ass:randomfield}\ref{eq:fieldass3}, since 
\[
	\lim_{n\to\infty}
	\abs{D_n} \PP \bigl(\abs{Z^t_0}>a_n x/K \bigr)
	= K^{\alpha}x^{-\alpha} \Bigl(C_t^++C_t^-\frac{1-p}{p} \Bigr)
	< \infty
\]
due to \eqref{eq:tailbalance}, \eqref{eq:approx1} and the choice of $(a_n)$.
\end{proof}

\begin{lemma}\label{lem:Hsingconsequence}
Let $t\in\NN$ and $K\in\NN$. For almost all realizations $(y_v)$ of $(Y_v)$ it holds that
\begin{equation}\label{eq:mlimit}
\begin{aligned}
	\MoveEqLeft
	\lim_{m\to\infty}\lim_{n\to\infty} 
	\sum_{v\in D_n}
	\PP \bigl(M_{f^K(y)Z^t}(\Avm)\le a_n x < f^K(y_v)Z^t_v \bigr)\\&
	=
	\lim_{n\to\infty}
	\sum_{v\in D_n}
	\PP \bigl(M_{f^K(y)Z^t}(\Av)\le a_n x < f^K(y_v)Z^t_v \bigr)
\end{aligned}
\end{equation}
for all $x>0$, where all limits exist.
\end{lemma}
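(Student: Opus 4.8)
The plan is to show that the difference between the $m$-truncated sum and the $\Av$-sum vanishes as $m\to\infty$, uniformly (or after $\limsup$) in $n$, and that each individual limit in $n$ exists. Recall that $\Av = v + (\bb t_n[-1,1]^d \cap \Zd) \cap \{z : v\prec z\}$ grows with $n$ (its cardinality is of order $\bb t_n^*$), whereas $\Avm$ is a fixed finite box once $m$ is chosen. So the strategy is a standard ``anti-clustering lets you replace the big neighbourhood by a fixed one'' argument, now carried out conditionally on a realization $(y_v)$ of $(Y_v)$ and with the extra weights $f^K(y_v)$.

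First I would observe the pointwise monotonicity: since $\Avm \subseteq \Av$, the event $\{M_{f^K(y)Z^t}(\Av)\le a_n x < f^K(y_v)Z^t_v\}$ is contained in $\{M_{f^K(y)Z^t}(\Avm)\le a_n x < f^K(y_v)Z^t_v\}$, so the $\Avm$-sum dominates the $\Av$-sum for every $n$ and every $m$. The difference of the two summands, for fixed $v$, is exactly
\[
	\PP\bigl(M_{f^K(y)Z^t}(\Avm)\le a_n x,\ M_{f^K(y)Z^t}(\Av\setminus\Avm) > a_n x,\ f^K(y_v)Z^t_v > a_n x\bigr),
\]
which is bounded above by $\PP\bigl(M_{\abs{f^K(y)Z^t}}(\Av\setminus\Avm) > a_n x,\ \abs{f^K(y_v)Z^t_v} > a_n x\bigr)$. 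Summing over $v\in D_n$ and invoking Lemma~\ref{lem:condanticluster}, the $\limsup_{n\to\infty}$ of this sum tends to $0$ as $m\to\infty$. This shows that the two iterated limits in \eqref{eq:mlimit}, \emph{if they exist}, coincide.

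It remains to establish existence of the limits in $n$. For the $\Avm$-sum with $m$ fixed: here $\Avm$ is a fixed finite set, so $\sum_{v\in D_n}\PP(M_{f^K(y)Z^t}(\Avm)\le a_n x < f^K(y_v)Z^t_v)$ is an average of the form $\abs{D_n}^{-1}\sum_{v\in D_n} g_n(y_v, y_{v+u_1},\dots)$ where the inner probability depends on $(y_v)$ only through finitely many coordinates near $v$, multiplied by $\abs{D_n}\PP(Z^t_0 > \cdot)$-type factors; one applies the spatial Birkhoff--Khinchin / ergodic lemma (Lemma~\ref{lem:ergodictheorem} in the appendix, exactly as in the proof of Lemma~\ref{lem:extremalapprox1}), using regular variation of $Z^t$ and the spectral-field representation \eqref{eq:regvarmulti} to compute the limiting integrand, together with the dominated-convergence bounds from Lemma~\ref{lem:ratiobound} to justify the limit. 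This gives a finite limit $L_m((y_v))$ for a.e. $(y_v)$. For the $\Av$-sum, monotonicity in $m$ (the summands decrease as $m$ increases, since $\Avm$ enlarges) shows that $\liminf_n$ and $\limsup_n$ of the $\Av$-sum are squeezed between $L_{m'}((y_v)) - \varepsilon_m(n)$-type quantities; combined with the anti-clustering estimate above, $\limsup_n$ minus $\liminf_n$ of the $\Av$-sum is at most $2\limsup_n(\text{difference sum})\to 0$ as $m\to\infty$, forcing the $\Av$-limit to exist and equal $\lim_{m\to\infty} L_m((y_v))$.

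The main obstacle I expect is the careful bookkeeping for the existence of $\lim_n$ of the $\Av$-sum: unlike the $\Avm$-sum, the set $\Av$ itself depends on $n$ (it scales with $\bb t_n$), so one cannot directly apply the ergodic lemma to it. The clean way around this is precisely the sandwich just described — never take $\lim_n$ of the $\Av$-sum directly, but bound $\abs{\limsup_n - \liminf_n}$ of it by the anti-clustering remainder and let $m\to\infty$ afterwards — together with a uniform integrability / domination argument (via the $\abs{y_v}^\gamma + \I{\{\abs{y_v}\le 1\}}$ bounds of Lemma~\ref{lem:ratiobound} and $\EE\abs{Y_0}^\gamma<\infty$) to ensure all the ergodic-average limits are finite and that interchanging the limits in $m$ and the sum over $v$ is legitimate. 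The null set of ``bad'' realizations $(y_v)$ is the countable union, over $m\in\NN$, of the exceptional sets coming from each application of the ergodic lemma, hence still null.
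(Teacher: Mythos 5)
Your proposal is correct and follows essentially the same route as the paper: the $\Av$-sum is sandwiched between the $\Avm$-sum and the $\Avm$-sum minus the anti-clustering remainder controlled by Lemma~\ref{lem:condanticluster}, while existence of the $n$-limit of the fixed-$m$ sum (your $L_m((y_v))$) is exactly the content of Lemma~\ref{lem:extremalindexgivenyfixedm}, which combines the ergodic lemma with the spectral representation just as you sketch. The only cosmetic difference is that the paper cites that appendix lemma directly rather than re-deriving it, and phrases the monotonicity in $m$ at the level of the limits rather than the summands.
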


\begin{proof}
Since $(Z_v^t)$ is regularly varying, we have from Lemma~\ref{lem:extremalindexgivenyfixedm} in Appendix~\ref{app:regvar} that
\[
	\lim_{n\to\infty}
	\sum_{v\in D_n}
	\PP\bigl(M_{f^K(y)Z^t}(\Avm)\le a_n x < f^K(y_v) Z^t_v \bigr)
\]
exists for each $m\in\NN$. Furthermore, the limit is decreasing and non-negative as $m\to\infty$, so also
\[
	\lim_{m\to\infty}\lim_{n\to\infty}\sum_{v\in D_n}
	\PP \bigl(M_{f^K(y)Z^t}(\Avm) \le a_n x <f^K(y_v) Z^t_v \bigr)
\]
exists. Additionally,
\begin{align*}
	0	&
	\le \sum_{v\in D_n}
	\PP \bigl(M_{f^K(y)Z^t}(\Avm)\le a_n x < f^K(y_v)Z^t_v \bigr)\\ &
	\qquad
	-\sum_{v\in D_n}
	\PP\bigl(M_{f^K(y)Z^t}(\Av)\le a_n x <f^K(y_v)Z^t_v \bigr)\\ &
	\le \sum_{v\in D_n}
	\PP \bigl (M_{\abs{f^K(y)Z^t}}(\Av\setminus \Avm)>a_n x,\, \abs{f^K(y_v)Z^t_v}>a_n x \bigr)
\end{align*}
vanishes in the limit $\lim_m\lim_n$ by Lemma~\ref{lem:condanticluster}. This concludes the proof.
\end{proof}

\begin{lemma}\label{lem:maxtailZt}
Let $t\in\NN$. For almost all realizations $(y_v)$ of $(Y_v)$ it holds that
\begin{equation}\label{eq:maxtailZt}
	\lim_{n\to\infty}
	\PP\bigl(
	\max_{v\in D_n} y_v Z^t_v \le a_n x	
	 \bigr)
	= \exp \Bigl( -x^{-\alpha} C_t^+\lim_{K\to\infty} \lim_{m\to\infty}\eta^{t,K,m}((y_v)) \Bigr)
\end{equation}
for all $x>0$ with $\eta^{t,K,m}$ given in Theorem~\ref{thm:maxtail}.
\end{lemma}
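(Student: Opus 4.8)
The goal is to compute the limiting distribution of the maximum of the approximating field $y_vZ^t_v$ over $D_n$. The plan is to work with a single fixed $t$ throughout, so I will drop the superscript notation mentally and exploit that $(Z^t_v)$ is strongly mixing, regularly varying of index $\alpha$, and satisfies the anti-clustering condition. The strategy is the classical blocking argument for extremes of dependent fields, adapted to the general geometry of Assumption~\ref{ass:geometry}: first replace $\PP(\max_{v\in D_n} y_vZ^t_v\le a_nx)$ by a product over the disjoint boxes $\Jz\subseteq D_n^-$ (using mixing to decouple the boxes and Assumption~\ref{ass:geometry} together with $\bb t_n^*=o(\abs{D_n})$ to control the boundary contribution $\abs{D_n^+}-\abs{D_n^-}$), then linearize $\prod(1-q_z^n)\approx\exp(-\sum q_z^n)$ where $q_z^n=\PP(\max_{v\in\Jz}y_vZ^t_v>a_nx)$, and finally identify $\lim_n\sum_{\Jz\subseteq D_n}q_z^n$ with the stated expression involving $\eta^{t,K,m}$.

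\textbf{Key steps in order.} First I would truncate: replace $y_v$ by $f^K(y_v)=y_v\I{\{\abs{y_v}\le K\}}$, showing via \eqref{eq:ratioboundwitht} and Lemma~\ref{lem:ergodictheorem} that the error from the indices with $\abs{y_v}>K$ is, uniformly in $n$, of order $\EE^{\bb Y}[\abs{\hat Y_0}^\gamma\I{\{\abs{\hat Y_0}>K\}}\mid\hat\Ii]((y_v))$, which vanishes as $K\to\infty$; this is why the outer $\lim_{K\to\infty}$ and the factor $C_t^+$ (coming from $\abs{D_n}\PP(Z^t_0>a_nx)\to C_t^+x^{-\alpha}$) appear. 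Then, for fixed $K$, I would apply a mixing/blocking lemma from the appendix to get $\PP(\max_{v\in D_n} f^K(y)Z^t\le a_nx)-\prod_{\Jz\subseteq D_n}(1-q_z^{n,K})\to 0$, using property~\ref{eq:fieldass2} (the rate $\alpha^t(\bb\gamma^t_n)\abs{D_n}/\bb t_n^*\to0$ with separation $\bb\gamma^t_n=o(\bb t_n)$ is exactly calibrated for this) and the boundary estimate from \eqref{eq:geometricassumption}. Next, since $\max_z q_z^{n,K}\to 0$ (each box has $\PP(\max_{\Jz}\abs{Z^t}>a_nx/K)\lesssim \bb t_n^*/\abs{D_n}\to0$) and $\sum_z q_z^{n,K}$ stays bounded, standard $\abs{\log(1-u)+u}\le u^2$ gives $\prod(1-q_z^{n,K})=\exp(-\sum q_z^{n,K}+o(1))$. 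Finally I would rewrite each $q_z^{n,K}=\sum_{v\in\Jz}\PP(M_{f^K(y)Z^t}(\Av)\le a_nx< f^K(y_v)Z^t_v)$ by running the translation-invariant order $\preceq$ through the box, pass to the $\Avm$ version via Lemma~\ref{lem:Hsingconsequence} (absorbing the $\lim_m$), sum over all boxes in $D_n$, and invoke Lemma~\ref{lem:ergodictheorem} to identify the limit as $C_t^+x^{-\alpha}$ times the conditional-expectation expression defining $\eta^{t,K,m}((y_v))$; here the regular-variation computation from Appendix~\ref{app:regvar} (Lemma~\ref{lem:extremalindexgivenyfixedm}) converts the block probability into the spectral-field formula $\EE(\cdots(Y_0\Theta^{t,m}_0)_+^\alpha-\max_{A^{(m)}_0}(Y_v\Theta^{t,m}_v)_+^\alpha\cdots)_+/\EE(\Theta^{t,m}_0)_+^\alpha$.

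\textbf{Main obstacle.} The delicate point is making the mixing-based decoupling rigorous under the \emph{general} index sets of Assumption~\ref{ass:geometry} rather than rectangles: one must handle simultaneously (a) the boundary indices in $D_n^+\setminus D_n^-$, (b) the $\bb\gamma^t_n$-wide "guard zones" removed between adjacent boxes to invoke strong mixing, and (c) the accumulation of the mixing error over roughly $\abs{D_n}/\bb t_n^*$ boxes — all of which must be $o(1)$, and the interplay $\bb\gamma^t_n=o(\bb t_n)$, $\bb t_n^*=o(\abs{D_n})$, $\alpha^t(\bb\gamma^t_n)\abs{D_n}/\bb t_n^*\to0$ is precisely what the assumptions are designed to supply. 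A secondary subtlety is the order of limits: the $\lim_m$ must be taken \emph{before} $\lim_{K\to\infty}$ (it is already done inside via Lemma~\ref{lem:Hsingconsequence}), and one must check that the "$\limsup_n$" in the blocking and truncation lemmas can be upgraded to honest limits because the summed block probabilities converge by Lemma~\ref{lem:ergodictheorem}; I would organize the argument so that each limit is taken on a quantity already known to converge, leaving only the final identification of the constant.
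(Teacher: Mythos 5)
Your proposal follows essentially the same route as the paper's proof: truncation of $y_v$ at level $K$ with the tail error controlled by Lemma~\ref{lem:ratiobound} and the ergodic Lemma~\ref{lem:ergodictheorem}, the blocking/mixing representation of the appendix (Lemmas~\ref{lem:maxvsJkasse} and \ref{lem:JsetandAset}) to pass to $\exp(-\sum_v\PP(M_{f^K(y)Z^t}(\Av)\le a_nx<f^K(y_v)Z^t_v))$, then Lemma~\ref{lem:Hsingconsequence} to insert the $\Avm$-version and Lemma~\ref{lem:extremalindexgivenyfixedm} to identify the limit as $x^{-\alpha}C_t^+\eta^{t,K,m}((y_v))$ before letting $K\to\infty$. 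The order of limits and the origin of the factor $C_t^+$ (the norming $a_n$ being calibrated to $Z_0$ rather than $Z_0^t$) are handled correctly, so this matches the paper's argument.
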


\begin{proof}
For each $K\in\NN$ we have that
\begin{equation}\label{eq:upperbound}
	\PP \bigl(\max_{v \in D_n} y_vZ^t_v \le a_n x \bigr) 
	\le 
	\PP \bigl(\max_{v \in D_n} f^K(y_v)Z^t_v \le a_n x \bigr)
\end{equation}
and
\begin{equation}\label{eq:lowerbound}
\begin{aligned}
	\MoveEqLeft
	\PP\bigl(\max_{v \in D_n} y_vZ^t_v \le a_n x \bigr)\\ &
	=
	\PP\bigl(\max_{v \in D_n} f^K(y_v)Z^t_v \le a_n x,\,
	\max_{v \in D_n} \overline{f}^K(y_v)Z^t_v \le a_n x \bigr)\\&
	\ge 
	\PP\bigl(\max_{v \in D_n} f^K(y_v)Z^t_v \le a_n x\bigr)-
	\PP\bigl(\max_{v \in D_n} \overline{f}^K(y_v)Z^t_v > a_n x \bigr).
\end{aligned}
\end{equation}
For the second term in the right hand side of \eqref{eq:lowerbound} we have from Lemma~\ref{lem:maxvsJkasse}~eq.~\eqref{eq:maxprobinequality} of Appendix~\ref{app:Geometry} that
\begin{equation}\label{eq:lowerboundoffbar}
\begin{aligned}
	\PP\bigl(\max_{v \in D_n} \overline{f}^K(y_v)Z^t_v \le a_n x \bigr)&
	\ge \exp\Bigl(-\sum_{v\in D_n}\PP ( \overline{f}^K(y_v)Z^t_v> a_n x ) \Bigr) + o(1)\\&
	\ge \exp\Bigl(-x^{-\alpha}\,c_t\,\EE^{\bb{Y}}(\abs{\hat{Y}_0}^\gamma\I{\{\abs{\hat{Y}_0}>K\}}\mid \hat{\Ii}) \Bigr) + o(1)
\end{aligned},
\end{equation}
where the last inequality follows from Lemma~\ref{lem:ratiobound}, the choice of $(a_n)$ and Lemma~\ref{lem:ergodictheorem} of Appendix~\ref{app:ergodictheorem}. From the assumption that $\EE\abs{Y_0}^\gamma<\infty$, we clearly have that the lower bound in \eqref{eq:lowerboundoffbar} converges to $1$, when $K$ tends to infinity.

Applying Lemma~\ref{lem:JsetandAset} from Appendix~\ref{app:Geometry} to the field $(f^K(y_v)Z^t_v)_{\vZ}$ we find that
\begin{equation}\label{eq:fMlimit}
	\begin{aligned}
	\MoveEqLeft
	\PP\bigl(\max_{v \in D_n} f^K(y_v)Z^t_v \le a_n x \bigr)\\ & 
	=
	\exp\Bigl(-\sum_{v\in D_n}\PP \bigl( M_{f^K(y)Z^t}(\Av) \le a_n x
	< f^K(y_v)Z^t_v \bigr) \Bigr) + o(1)
\end{aligned}
\end{equation}
as $n\to\infty$, where the sum has a well-defined limit given by \eqref{eq:mlimit}.
Since the probability $\PP(\max_{v \in D_n} f^K(y_v)Z^t_v \le a_n x )$ is decreasing in $K$, the (right hand side) limit in \eqref{eq:mlimit} furthermore has a well-defined limit as $K$ tends to infinity. Additionally, from Lemma~\ref{lem:extremalindexgivenyfixedm} of Appendix~\ref{app:regvar} we have
\begin{equation}\label{eq:regvarlimit}
\begin{aligned}
	&\lim_{n\to\infty}\sum_{v\in D_n}
	\PP \bigl( M_{f^K(y)Z^t}(A_v^{(m)}) \le a_n x < f^K(y_v)Z^t_v \bigr)\\
	&=x^{-\alpha}\,C_t^+
	\frac{\EE^{\bb{Y}}\Bigl[\EE\Bigl(\bigl((f^K(Y_0)\Theta^{t,m}_0)_+^\alpha-(\max_{v\in A^{(m)}_0}f^K(Y_v)\Theta^{t,m}_v)_+^\alpha\bigr)_+\mid\bb{Y}=\hat{\bb{Y}}\Bigr)\mid\hat{\Ii}\Bigr]((y_v))}{\EE(\Theta_0^{t,m})_+^\alpha}.
\end{aligned}
\end{equation}
Letting $K$ tend to infinity in \eqref{eq:upperbound} and \eqref{eq:lowerbound} and combining with \eqref{eq:mlimit}, \eqref{eq:lowerboundoffbar}--\eqref{eq:regvarlimit} gives the desired result.
\end{proof}

\begin{proof}[Proof of Theorem~\ref{thm:maxtail}]
From Lemma~\ref{lem:maxtailZt} we have that $\PP ( \max_{v\in D_n} y_v Z_v^t \le a_n x)$ has a well-defined limit as $n\to\infty$ for all $t\in\NN$.  Combining with Lemma~\ref{lem:extremalapprox1} we obtain that the limit of 
$\PP ( \max_{v\in D_n} y_v Z_v \le a_n x)$ exists and is given by
\[
	\lim_{n\to \infty}
	\PP \bigl( \max_{v\in D_n} y_v Z_v \le a_nx \bigr)
	= \lim_{t\to\infty}\lim_{n\to \infty}
	\PP \bigl( \max_{v\in D_n} y_v Z_v^t \le a_nx \bigr).
\]
The statement of the theorem now follows from \eqref{eq:maxtailZt} since $C_t^+$ tends to $1$ by assumption.
\end{proof}

\begin{proof}[Proof of Corollary~\ref{cor:maxtail2}]
The result is an immediate consequence of Theorem~\ref{thm:maxtail} and  Lemmas~\ref{lem:tailapprox} and \ref{lem:JsetandAset} of Appendix~\ref{app:Geometry}.
\end{proof}

\begin{proof}[Proof of Corollary~\ref{cor:maxtailergodic}]
Assuming that $(Y_v)$ is ergodic makes the invariant $\sigma$-algebra $\hat{\Ii}$ trivial, and the first part of the corollary follows directly.  Taking expectations in (\ref{eq:main1}) and using dominated convergence then gives
\[
	\lim_{n\to\infty}
	\PP\bigl( \max_{v\in D_n} Y_v Z_v \le a_n x\bigr) = \exp ( -x^{-\alpha}\eta).
\]
The statement on the extremal index of $(Y_vZ_v)_{v\in\Zd}$ follows from combining this with the convergence (\ref{eq:tailofY0Z0}) concerning the marginal distribution.
\end{proof}

\begin{proof}[Proof of Corollary~\ref{cor:mtdpendence}]
Let $t\in\NN$ be fixed, and choose $m\ge m_t$. Then the random variables $M_{yZ^t}(\Av\setminus \Avm)$ and $y_v Z_v^t$ are independent for any $v\in D_n$. Consequently,
\begin{align*}
	\MoveEqLeft	
	\sum_{v\in D_n}
	\PP \bigl( M_{yZ^t}(\Av \setminus \Avm) > a_nx,\,  y_v Z_v^t > a_n x\bigr) \\&
	= \sum_{v\in D_n}
	\PP \bigl( M_{yZ^t}(\Av \setminus \Avm) > a_nx\bigr) \,
	\PP\bigl(y_v Z_v^t > a_n x\bigr) \\ &
	\le \sum_{v\in D_n} 
	\PP\bigl(y_v Z_v^t > a_n x\bigr)
	\sum_{u\in \Av}
	\PP \bigl( y_u Z_u^t > a_n x\bigr) \\ &
	\to 0
\end{align*}
as $n\to\infty$. The convergence, which holds for almost all realizations $(y_v)$ of $(Y_v)$, can be obtained using arguments similar to those of Appendix~\ref{app:Geometry} utilizing the convergence result of Appendix~\ref{app:ergodictheorem}. 
Having obtained this convergence, going through the arguments of the proofs of Lemmas~\ref{lem:Hsingconsequence} and \ref{lem:maxtailZt} without the $K$-restriction (i.e. with $K=\infty$) shows that  
\[
	\lim_{n\to\infty}
	\PP\bigl(
	\max_{v\in D_n} y_v Z^t_v \le a_n x	
	 \bigr)
	= \exp \Bigl( -x^{-\alpha} \lim_{m\to\infty}\eta^{t,m}((y_v)) \Bigr),
\]
with $\eta^{t,m}$ given in the present corollary. The result now follows by an application of Lemma~\ref{lem:extremalapprox1}.
\end{proof}

\begin{proof}[Proof of Corollary~\ref{cor:Zisstrongmixing}]
This is a trivial consequence since all fields $(Z^t_v)$ can be chosen identical to $(Z_v)$. 
\end{proof}

\section{Proof of results from Section~\ref{sec:mainresultscluster}}
\label{sec:proofclusterprocess}

In this section we present the proofs of Theorems~\ref{thm:clusterconvergence} and \ref{thm:originalscale}, and therefore the underlying assumptions hold for now. In particular we consider a bounded set $C\subseteq \Rd$ with non-empty interior and a boundary of volume zero. To ease notation, we define
\begin{align*}
	P_n(A) & = \bigl\{
		z\in\Zd\::\: \Jz \subseteq \bb c_n A 
	\bigr\} \qquad \text{and}\\ 
	Q_n(A) & = \bigl\{
		z\in\Zd\::\: \Jz \cap \bb c_n A \neq \emptyset
	\bigr\}
\end{align*}
for each $A\in\Bb(C)$.

\begin{proof}[Proof of Theorem~\ref{thm:clusterconvergence}]
We only show the conditional convergences of \eqref{eq:clusterconv1} as the unconditional ones of \eqref{eq:clusterconv2} follow immediately.

We start by showing the vague convergence $N_n \vd N$ conditioned on $(Y_v)=(y_v)$. To ease notation, we write $N_n({}\cdot{};(y_v))$ as the conditional cluster measure. According to \cite[Theorem~4.18]{Kallenberg2017} it is enough to show that $\EE N_n(A;(y_v)) \to \EE N(A;(y_v))$ for any half-open box $A\subseteq C$, and that $\PP(N_n(B;(y_v))=0) \to \PP(N(B;(y_v))=0)$ for any union $B\subseteq C$ of such boxes.

Let $A = \bigtimes_{\ell=1}^d (a_\ell,b_\ell] \subseteq C$, where $a_\ell < b_\ell$ for all $\ell=1,\dots,d$. Indeed, $A$ is a bounded set with non-empty interior and boundary of volume zero, and therefore the sequence of sets $(\bb c_n A)\cap \Zd$ satisfy Assumption~\ref{ass:geometry} with $\Jz$-boxes with side-lengths given by the vector $\bb t_n$. Thus, applying the results of the previous sections to this set, we obtain (similarly to Corollary~\ref{cor:maxtail2}) that
\begin{equation}\label{eq:sumAconv}
	\lim_{n\to\infty} \sum_{z\in P_n(A)} 
	\PP \bigl( M_{yZ}(\Jz)>a_n x\bigr)
	= \abs{A} x^{-\alpha} \eta((y_v)) ,
\end{equation}
and similarly with $P_n(A)$ replaced by $Q_n(A)$.
Be definition of $N_n$ we therefore see that
\begin{align*}
	\EE N_n(A;(y_v)) &
	\sim \sum_{z\in P_n(A)} 
	\PP \bigl( M_{yZ}(\Jz)>a_n x\bigr) \\ &
	\to \abs{A} x^{-\alpha}\eta((y_v)) \\ &
	= \EE N(A;(y_v)) 
\end{align*}
as $n\to\infty$,
where the asymptotic equivalence is due to \eqref{eq:sumAconv} and the fact that
\[
	\sum_{z\in P_n(A)} \I{\{M_{yZ}(\Jz)>a_n x\}}
	\le N_n(A;(y_v))
	\le \sum_{z\in Q_n(A)} \I{\{M_{yZ}(\Jz)>a_n x\}}
\]
for all $n$. 

Now let $B\subseteq C$ be a finite union of boxes, i.e.
\[
	B = \bigcup_{i=1}^r \Bigl( 
	\bigtimes_{\ell=1}^d (a^i_\ell,b^i_\ell]
	\Bigr)
\]
for some $r\in\NN$ and some $a^i_\ell < b^i_\ell$ for $i=1,\dots,r$ and $\ell=1,\dots,d$. Again, as the sequence of sets $(\bb c_n B) \cap \Zd$ satisfies Assumption~\ref{ass:geometry} based on the same $\Jz$-boxes, utilizing the results of previous sections we get (similarly to Theorem~\ref{thm:maxtail})
\begin{align*}
	\PP\bigl(N_n(B;(y_v)) = 0 \bigr) &
	\sim \PP\Bigl( M_{yZ} \bigl(\bigcup_{z\in P_n(B)} \Jz \bigr) \le a_n x\Bigr) \\ &
	\to \exp \Bigl( -\abs{B} x^{-\alpha} \eta((y_v))\Bigr)\\&
	= \PP\bigl(N(B;(y_v))=0 \bigr)
\end{align*}
as $n\to\infty$. Hence, $N_n({}\cdot{};(y_v)) \vd N({}\cdot{};(y_v))$.

To show that also (with similar notation as above) $\tilde N_n({}\cdot{};(y_v)) \vd N({}\cdot{};(y_v))$, it again suffices to show that $\EE \tilde N_n(A;(y_v)) \to \EE N(A;(y_v))$ and $\PP(\tilde N_n(B;(y_v))=0) \to \PP(N(B;(y_v))=0)$ for sets $A,B\subseteq C$ as above. As the proof follows similar arguments as the proof of \cite[Theorem~5.2]{RNielsenStehr2022}, we simply sketch the structure here and refer to their proof for details: By the definition of the cluster processes, a cluster counted by $\tilde N_n$ will be counted by at least one cluster by $N_n$ as well. However, for $m\in\NN$, there exists a set $E_{m,n}$ with probability tending to $1$ in the double limit $\lim_m\lim_n$, such that a cluster counted by $\tilde N_n$ corresponds to at most one cluster counted by $N_n$, and thus on this set the cluster measures are identical. The two sufficient convergences for $\tilde N_n$ are then obtained by the corresponding convergences for $N_n$ by restricting to the set $E_{m,n}$ and then letting first $n\to\infty$ and then $m\to\infty$.
\end{proof}

\begin{proof}[Proof of Theorem~\ref{thm:originalscale}]
The results follow by similar considerations as Theorem~7.1 of \cite{RNielsenStehr2022}.
\end{proof}

\section{Example~\ref{ex:movingaverage}: Moving average}
\label{sec:movingaverage}

Recall that we consider the moving average field $(Z_v)_\vZ$ given by
\[
	Z_v = \sum_{u\in\Zd} \psi_u \xi_{v-u} ,
\]
where the random variables $(\xi_v)$ are i.i.d. with regularly varying right tail of index $\alpha$ satisfying the tail-balance condition $(1-p_\xi) \PP(\xi_0>x) \sim p_\xi \PP(\xi_0< -x)$ as $x\to\infty$. It is assumed that 
\[
	\sum_{u \in \Zd} \abs{\psi_u}^{\delta} < \infty	
\]
for some $\delta \in (0,\alpha) \cap (0,1]$, which ensures the absolute almost sure convergence of $Z_v$. Defining the approximating field $(Z_v^t)_\vZ$ by
\[
	Z_v^t = \sum_{\norm{u}\le t} \psi_u \xi_{v-u} ,
\]
we show in this section that $(Z_v)$ fulfills Assumption~\ref{ass:randomfield}, and we explicitly find the relevant spectral fields and corresponding extremal functional $\eta(\cdot)$. 

In \cite[Section~3]{Jakubowski2019} it is shown that a similar moving average field is $\max$-$t$-approximable, which in turn means that  \ref{eq:fieldass2}--\ref{eq:fieldass5} of Assumption~\ref{ass:randomfield} are satisfied. However, the concept of $\max$-$t$-approximability supplies a different sequence of approximating fields, $(\tilde{Z}^t_v)$, where each $\tilde{Z}_v^t$ is not defined from only finitely many $\xi_v$-variables. Such fields are less suitable for our purpose, where we find spectral distributions via finite-dimensional distributions.

\begin{lemma}\label{lem:MA1}
The moving average field $(Z_v)$ in combination with its approximating fields $(Z_v^t)$ fulfill \eqref{eq:tailbalance} and \ref{eq:fieldass2}--\ref{eq:fieldass5} of Assumption~\ref{ass:randomfield}. The tail-balance \eqref{eq:tailbalance} is satisfied with $p=\sum \bigl(p_\xi (\psi_u)_+^\alpha + (1-p_\xi) (\psi_u)_-^\alpha\bigr)/\norm{\psi}_\alpha^\alpha$, where $\norm{\psi}_\alpha^\alpha$ is defined in (\ref{eq:normpsi}).
\end{lemma}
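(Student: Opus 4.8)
The strategy is to verify each property in turn, using the classical theory of regularly varying moving averages (as developed in, e.g., Mikosch--Samorodnitsky and Hult--Samorodnitsky) applied both to the full field $(Z_v)$ and to the finite truncations $(Z_v^t)$, and then to check that the truncations approximate $(Z_v)$ well enough to obtain \ref{eq:fieldass4}--\ref{eq:fieldass5}. First I would recall the standard fact that if $(\xi_v)$ is i.i.d.\ regularly varying of index $\alpha$ with tail-balance parameter $p_\xi$, and $(\psi_u)$ satisfies the summability condition $\sum_u\abs{\psi_u}^\delta<\infty$ for some $\delta\in(0,\alpha)\cap(0,1]$, then the series $Z_v=\sum_u\psi_u\xi_{v-u}$ converges absolutely a.s., and every finite-dimensional vector $(Z_v)_{v\in F}$ is multivariate regularly varying; in particular the single-variable tail satisfies
\[
	\PP(Z_0>x)\sim\norm{\psi}_\alpha^\alpha\,\bigl(\tfrac{p}{ \norm{\psi}_\alpha^\alpha}\bigr)\PP(\abs{\xi_0}>x)\quad\text{(up to the usual normalisation)},
\]
and the tail-balance \eqref{eq:tailbalance} holds with $p=\sum_u\bigl(p_\xi(\psi_u)_+^\alpha+(1-p_\xi)(\psi_u)_-^\alpha\bigr)/\norm{\psi}_\alpha^\alpha$, because large values of $Z_0$ are, asymptotically, caused by a single large $\xi_{-u}$ weighted by $\psi_u$, whose sign is that of $\mathrm{sign}(\psi_u)\,\mathrm{sign}(\xi_{-u})$. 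The same computation with the finite sum gives \eqref{eq:tailbalance}-type asymptotics for each $Z_0^t$.

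Second, for \ref{eq:fieldass2} and \ref{eq:fieldass3}: since each $Z_v^t$ depends only on $\{\xi_{v-u}:\norm{u}\le t\}$, the field $(Z_v^t)$ is $2t$-dependent, hence strongly mixing with $\alpha^t(\bb h)=0$ as soon as $\min_\ell h_\ell>2t$; choosing any $\bb\gamma_n^t\to\infty$ with $\bb\gamma_n^t=o(\bb t_n)$ and $\min_\ell\gamma_{n,\ell}^t>2t$ eventually makes $\alpha^t(\bb\gamma_n^t)=0$, so \ref{eq:fieldass2} is trivial, and $m_t$-dependence with $m_t=2t$ immediately yields the anti-clustering condition \ref{eq:fieldass3} because $M_{\abs{Z^t}}(\An\setminus\Anm)$ and $\abs{Z_0^t}$ are independent once $m>2t$, so the conditional probability there equals $\PP(M_{\abs{Z^t}}(\An\setminus\Anm)>sa_n)\to 0$ (the sum $\sum_{v\in A_0^n}\PP(\abs{Z_v^t}>sa_n)$ stays bounded and the ``far'' part is negligible). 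As noted after Assumption~\ref{ass:randomfield}, this is exactly the statement quoted in Example~\ref{ex:movingaverage}.

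Third, the quantitative approximation \ref{eq:fieldass4}--\ref{eq:fieldass5}. Write $Z_v=Z_v^t+R_v^t$ with $R_v^t=\sum_{\norm{u}>t}\psi_u\xi_{v-u}$; the tail of $R_0^t$ is again regularly varying of index $\alpha$ with constant proportional to $\sum_{\norm{u}>t}\abs{\psi_u}^\alpha\to 0$ as $t\to\infty$. Using the standard tail asymptotics for a sum of a ``heavy'' part and an independent ``light remainder'' (or, more precisely, applying the multivariate regular variation of $(Z_0,Z_0^t)$, whose spectral measure concentrates on the coordinate directions weighted by the $\abs{\psi_u}^\alpha$), one gets $\PP(Z_0^t>x)/\PP(Z_0>x)\to C_t^+$ and $\PP(Z_0^t<-x)/\PP(Z_0>x)\cdot(\cdots)\to C_t^-$ with $C_t^\pm=\bigl(\sum_{\norm{u}\le t}\cdots\bigr)/\bigl(\sum_{u}\cdots\bigr)\to 1$, giving \ref{eq:fieldass4}. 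For \ref{eq:fieldass5} one observes that on the event $\{Z_0>x\}$ the dominant single term $\psi_u\xi_{-u}$ has $\norm{u}\le t$ with asymptotic probability $C_t^+\to 1$, and on that event $Z_0^t>x$ as well up to a negligible correction from $R_0^t$; hence $\PP(Z_0^t>x,Z_0>x)/\PP(Z_0>x)\to$ a constant tending to $1$ as $t\to\infty$, and symmetrically for the left tail. The cleanest way to make this rigorous is to invoke joint regular variation of $(Z_0,Z_0^t,R_0^t)$: its spectral measure lives on the rays spanned by $e_u+\psi_u$-type directions, and one reads off all five limits directly.

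\textbf{Main obstacle.} The routine parts (convergence of the series, $2t$-dependence, the value of $p$) are immediate; the delicate point is \ref{eq:fieldass5}, i.e.\ controlling $\PP(Z_0^t>x,\,Z_0>x)$ from below uniformly so that the double $\liminf$ equals exactly $1$. This requires showing that not only does $Z_0>x$ force a single large $\xi_{-u}$, but that with probability tending to $1$ (in $t$) this large coordinate has $\norm{u}\le t$ \emph{and} the residual $R_0^t$ does not push $Z_0^t$ back below $x$; the second half needs an estimate like $\PP(\abs{R_0^t}>\varepsilon x\mid Z_0>x)\to 0$, which follows from the subexponential/single-big-jump heuristic but must be justified via the summability of $\abs{\psi_u}^\delta$ and a Potter-type bound (here the exponent $\delta\le 1$ and $\delta<\alpha$ is exactly what is needed to dominate the tail of $R_0^t$). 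I would organise this as a short lemma on the joint tail of $(Z_0,R_0^t)$ and then deduce \ref{eq:fieldass4}--\ref{eq:fieldass5} from it; the rest of the section can then proceed to identify the spectral fields $(\Theta_v^{t,m})$ explicitly and plug them into the formula for $\eta(\cdot)$.
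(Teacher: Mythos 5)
Your plan matches the paper's proof essentially step for step: the tail asymptotics for (truncated) moving averages via the standard single-big-jump lemma (the paper cites a spatial version of \cite[Lemma~A3.26]{Embrecths1997}) give the tail balance with the stated $p$ and condition \ref{eq:fieldass4}; $2t$-dependence disposes of \ref{eq:fieldass2}--\ref{eq:fieldass3}; and \ref{eq:fieldass5} is handled exactly as you anticipate, by bounding $\PP(Z_0^t>x,\,Z_0>x)$ below via $\PP(Z_0>(1+d_t)x)-\PP(\abs{Z_0-Z_0^t}>d_t x)$ and controlling the remainder's tail through $\sum_{\norm{u}>t}\abs{\psi_u}^\alpha\to 0$ (the paper takes $d_t=(\sum_{\norm{u}>t}\abs{\psi_u}^\alpha)^{1/(2\alpha)}$, which is the quantitative version of the margin argument you sketch). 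You correctly identified the only delicate point, so the proposal is sound.
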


\begin{proof}
For any $t\in\NN$ clearly $(Z_v^t)_\vZ$ is $2t$-dependent and thus \ref{eq:fieldass2} and \ref{eq:fieldass3} are trivially satisfied.

For any $t$ (including $t=\infty$ giving $Z^t=Z$) we find from a spatial version of \cite[Lemma~A3.26]{Embrecths1997} that
\begin{equation}\label{eq:MAtaileq}
	\frac{\PP(Z_0^t>x)}{\PP(\abs{\xi_0} > x)}
	\to
	\sum_{\norm{u}\le t} \Bigl( 
	p_\xi (\psi_u)_+^\alpha + (1-p_\xi) (\psi_u)_-^\alpha 	
	\Bigr) 
\end{equation}
as $x\to\infty$. Realizing that the fraction $\PP(Z_0^t<-x)/\PP(\abs{\xi_0}>x)$ has a similar limit simply replacing $p_\xi$ with $(1-p_\xi)$, we obtain that
\[
	\frac{\PP(Z_0<-x)}{\PP(Z_0>x)}
	\to
	\frac{1-\sum_u \bigl(p_\xi (\psi_u)_+^\alpha + (1-p_\xi) (\psi_u)_-^\alpha\bigr)/\norm{\psi}_\alpha^\alpha}{\sum_u \bigl(p_\xi (\psi_u)_+^\alpha + (1-p_\xi) (\psi_u)_-^\alpha\bigr)/\norm{\psi}_\alpha^\alpha} .
\]
This shows \eqref{eq:tailbalance} with the given $p$. The asymptotic equivalence between $Z_v^t$ and $Z_v$ of \ref{eq:fieldass4} follows immediately since
\[
	\lim_{x\to\infty}
	\frac{\PP(Z_0^t>x)}{\PP(Z_0>x)}
	= \frac{\sum_{\norm{u}\le t} \bigl(p_\xi (\psi_u)_+^\alpha + (1-p_\xi) (\psi_u)_-^\alpha \bigr)}{\sum_u \bigl(p_\xi (\psi_u)_+^\alpha + (1-p_\xi) (\psi_u)_-^\alpha\bigr)}
	\to 1
\]
as $t \to \infty$, with a similar expression for the fraction of left tails replacing $p_\xi$ with $(1-p_\xi)$.

We now set out to show \ref{eq:fieldass5} of Assumption~\ref{ass:randomfield}. Let $d_t \downarrow 0$ at a rate determined soon. For such a $d_t$ it is easily seen that
\begin{align*}
	\PP\bigl(Z_0 > (1+d_t) x \bigr) - \PP\bigl( \abs{Z_0 - Z_0^t}>d_t x\bigr) &
	\le \PP\bigl(Z_0 > (1+d_t) x,\,  \abs{Z_0 - Z_0^t} \le d_t x\bigr) \\ &
	\le \PP\bigl(Z_0^t > x,\, Z_0>x \bigr) .
\end{align*}
As a consequence of \eqref{eq:MAtaileq} using $t=\infty$ we in particular have that the right tail of $Z_0$ is regularly varying with index $\alpha$, and consequently 
\[
	\lim_{x\to\infty} \frac{\PP(Z_0>(1+d_t)x)}{\PP(Z_0>x)}
	= (1+d_t)^{-\alpha}
	\to 1
\]
as $t\to\infty$. Thus we have
\[
	\liminf_{t\to\infty}\liminf_{x\to\infty} 
	\frac{
	\PP( Z_0^t>x, Z_0 > x)}{\PP(Z_0>x)} 
	=
	1
\]
once it is shown that
\[
	\limsup_{t\to\infty} \limsup_{x\to\infty}
	\frac{\PP(\abs{Z_0-Z_0^t}>d_t x)}{\PP(Z_0>x)} = 0 .
\]
Again utilizing \cite[Lemma~A3.26]{Embrecths1997} it can be seen that
\[
	\lim_{x\to\infty} 
	\frac{\PP(\abs{Z_0-Z_0^t}>d_t x)}{\PP(Z_0>x)}
	= d_t^{-\alpha} \frac{\sum_{\norm{u}> t} \abs{\psi_u}^\alpha}{\sum_u \bigl(p_\xi (\psi_u)_+^\alpha + (1-p_\xi) (\psi_u)_-^\alpha\bigr)} ,
\]
which tends to 0 e.g. when defining $d_t$ by 
\[
	d_t = \bigl( \sum_{\norm{u}>t} \abs{\psi_u}^\alpha \bigr)^{1/(2\alpha)} .
\]
It can be shown in the same manner that also
\[
	\liminf_{t\to\infty}\liminf_{x\to\infty} 
	 \frac{
	\PP( Z_0^t<-x, Z_0<-x)}{\PP(Z_0<-x)} 
	= 1,
\]
which concludes the proof.
\end{proof}

\begin{lemma}
Let $(Z_v)$ and $(Z_v^t)Z$ be as above, and assume that $(Y_v)_{v\in\Zd}$ is a stationary random field that is independent of $(\xi_v)_{v\in\Zd}$ and satisfies (\ref{eq:gammaY}). Then $(Y_vZ_v)$ satisfies Assumption~2. Furthermore, the extremal functional $\eta$ is given by
\begin{equation*}
	\eta((y_v)) 
	= \frac{\EE^{\bb{Y}}\Bigl[\max_{v\in \Zd}\bigl(p_\xi(\hat{Y}_v\psi_v)^\alpha_+
	+(1-p_\xi)(\hat{Y}_v\psi_v)^\alpha_-\bigr)_+\mid \hat{\Ii}\Bigr]((y_v))}
	{\sum_{v\in \Zd}
	\bigl(p_\xi(\psi_v)^\alpha_+ + (1-p_\xi)(\psi_v)^\alpha_-\bigr)} .
\end{equation*}
\end{lemma}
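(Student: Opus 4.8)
The plan is to verify Assumption~\ref{ass:randomfield} for the pair $(Y_vZ_v)$ and then specialize the general formula for $\eta(\cdot)$ from Theorem~\ref{thm:maxtail} using the explicit spectral fields of the finite moving averages $(Z_v^t)$. The first half is essentially bookkeeping: Lemma~\ref{lem:MA1} already establishes \eqref{eq:tailbalance} and conditions \ref{eq:fieldass2}--\ref{eq:fieldass5} for $(Z_v)$ together with $(Z_v^t)$, and condition \ref{eq:fieldass1} is immediate since each $Z_v^t$ is a finite linear combination of i.i.d.\ regularly varying variables, hence jointly regularly varying of index $\alpha$ (this is the standard computation behind \eqref{eq:MAtaileq}). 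The hypotheses on $(Y_v)$---stationarity, independence of $(\xi_v)$ and therefore of $(Z_v)$ and all $(Z_v^t)$, and the moment condition \eqref{eq:gammaY}---are exactly what Assumption~\ref{ass:randomfield} requires of the noise field, so $(Y_vZ_v)$ satisfies Assumption~2.

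The substantive step is identifying the spectral field $(\Theta_v^{t,m})_{v\in B^{(m)}}$ of $(Z_v^t)_{v\in B^{(m)}}$. Since $Z_v^t=\sum_{\norm u\le t}\psi_u\xi_{v-u}$ is a finite moving average, the extremal behavior of the block $(Z_v^t)_{v\in B^{(m)}}$ is governed by a single large $\xi$-variable: writing the joint tail in terms of $\PP(\abs{\xi_0}>x)$ as in \eqref{eq:MAtaileq}, when $\norm{(Z_v^t)_{v\in B^{(m)}}}$ is large it is because some $\xi_{w}$ is large, and then $Z_v^t\approx \psi_{v-w}\xi_w$ for each $v$. Consequently the spectral field is, up to the usual normalization on the sphere, of the form $\Theta_v^{t,m}=\varepsilon\,\psi_{v-W}/\norm{(\psi_{u-W})_{u\in B^{(m)}}}$ where $W$ is a random index (with the obvious truncation $\norm{v-W}\le t$) chosen proportionally to $p_\xi(\psi_{v-W})_+^\alpha+(1-p_\xi)(\psi_{v-W})_-^\alpha$ summed over $v\in B^{(m)}$, and $\varepsilon=\pm1$ with $\PP(\varepsilon=1)=p_\xi$. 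Plugging this into the definition of $\eta^{t,K,m}$, using the positive-homogeneity of $(\cdot)_+^\alpha$ to cancel the normalizing denominator, and simplifying $\EE(\Theta_0^{t,m})_+^\alpha$, one gets that the numerator collapses to $\EE^{\bb Y}[\EE(\max_{v}(p_\xi(\hat Y_v\psi_{v-W})_+^\alpha+(1-p_\xi)(\hat Y_v\psi_{v-W})_-^\alpha)_+ - (\text{shifted max over }A_0^{(m)})\mid\ldots)\mid\hat\Ii]$ after averaging over $W$; the telescoping over the translation-invariant total order $\prec$, as $m\to\infty$ and then $t\to\infty$, turns the sum over $v\in D_n$ of differences of running maxima into a single maximum over all of $\ZZ^d$, exactly as in the general theory underlying Corollary~\ref{cor:maxtail2}.

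Carrying out the three nested limits $\lim_t\lim_K\lim_m$ then yields the stated closed form: the $K$-truncation disappears by \eqref{eq:gammaY} and dominated convergence, the $m\to\infty$ limit replaces $B^{(m)}$ and $A_0^{(m)}$ by all of $\ZZ^d$, and the $t\to\infty$ limit replaces the truncated sums $\sum_{\norm u\le t}$ by the full sums $\sum_{u\in\ZZ^d}$, giving the denominator $\sum_{v\in\ZZ^d}(p_\xi(\psi_v)_+^\alpha+(1-p_\xi)(\psi_v)_-^\alpha)=p\,\norm{\psi}_\alpha^\alpha$ and, in the numerator, after absorbing the average over the random shift $W$ (which is a reindexing $\psi_{v-W}\mapsto\psi_v$ combined with the sum in the denominator), the quantity $\EE^{\bb Y}[\max_{v\in\ZZ^d}(p_\xi(\hat Y_v\psi_v)_+^\alpha+(1-p_\xi)(\hat Y_v\psi_v)_-^\alpha)_+\mid\hat\Ii]$. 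The main obstacle is the careful bookkeeping in the spectral-field computation---correctly accounting for the sign variable $\varepsilon$, the truncation $\norm{v-W}\le t$, and the interplay between the random shift $W$ and the outer translation-invariant conditioning $\EE^{\bb Y}[\,\cdot\mid\hat\Ii]$---so that the telescoping in the order $\prec$ and the passage $m,t\to\infty$ genuinely produce the unrestricted maximum over $\ZZ^d$ rather than a shift-dependent expression; everything else reduces to the tail asymptotics already recorded in Lemma~\ref{lem:MA1} and the general machinery of Section~\ref{sec:proofmaxresults}.
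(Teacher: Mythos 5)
Your overall route is the same as the paper's: verify Assumption~\ref{ass:randomfield} via Lemma~\ref{lem:MA1} together with the joint regular variation of the finite moving averages, compute the spectral field of $(Z_v^t)_{v\in B^{(m)}}$ explicitly, insert it into the general formula for the extremal functional (the paper works through Corollary~\ref{cor:mtdpendence}, so the $K$-truncation never appears; your route through $\eta^{t,K,m}$ plus dominated convergence is an acceptable variant), use translation invariance of $\EE^{\bb{Y}}[\,\cdot\mid\hat{\Ii}]$ to absorb the random shift, and let the telescoping over the order $\prec$ collapse the sum of differences of running maxima into a single maximum as $m\to\infty$ and then $t\to\infty$. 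All of that matches the paper's proof.

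There is, however, one concrete error in the step you yourself flag as the main obstacle, namely the law of the random shift $W$ in the spectral field. Writing $\bb{\psi}^u=(\psi_{v-u})_{v\in B^{(m)}}$, the correct spectral measure (the paper's \eqref{eq:Zspectralconv}) is the mixture of point masses at $\pm\bb{\psi}^u/\norm{\bb{\psi}^u}$ with $\PP(W=u)\propto\norm{\bb{\psi}^u}^{\alpha}=\max_{v\in B^{(m)}}\abs{\psi_{v-u}}^{\alpha}$ and an independent sign $\varepsilon$ taking values $\pm 1$ with probabilities $p_\xi$ and $1-p_\xi$. You instead weight $W=u$ proportionally to $\sum_{v\in B^{(m)}}\bigl(p_\xi(\psi_{v-u})_+^{\alpha}+(1-p_\xi)(\psi_{v-u})_-^{\alpha}\bigr)$. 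This is not a harmless normalization choice: the cancellation you invoke (``positive homogeneity of $(\cdot)_+^{\alpha}$ cancels the normalizing denominator'') works precisely because the factor $\norm{\bb{\psi}^u}^{-\alpha}$ produced by $\alpha$-homogeneity is offset by the mixing weight $\norm{\bb{\psi}^u}^{\alpha}$, leaving the unnormalized vectors $\bb{\psi}^u$ in both numerator and denominator of $\eta^{t,m}$. With your weight the two factors do not cancel, the resulting ratio acquires spurious factors of $\norm{\bb{\psi}^u}^{-\alpha}$ times your weight, and the computation does not reduce to the stated closed form. Once the weight is corrected to $\norm{\bb{\psi}^u}^{\alpha}$, the remainder of your plan (homogeneity, re-indexing under $\hat{\Ii}$, telescoping, and the limits in $m$ and $t$) goes through exactly as in the paper.
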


\begin{proof}
Recall the notation $B^{(m)}$ introduced in Section~\ref{sec:prelim}. Using this, we can write
\[
	Z_v^t = \sum_{u \in B^{(t)}} \psi_u \xi_{v-u} .
\]
We start by showing that $(Z_v^t)$ is regularly varying, which combined with Lemma~\ref{lem:MA1} shows that Assumption~2 is satisfied. Define $\bb{Z}^{t,m}=(Z_v^t)_{v\in B^{(m)}}$ and $\bb \xi^m = (\xi_v)_{v\in B^{(m)}}$ for all $m$. By definition, $\bb Z^{t,m}$ is a linear transformation of $\bb{\xi}^{t+m}$. Since $\bb{\xi}^{t+m}$ is i.i.d. it is jointly regularly varying with 
\[
\begin{aligned}
	&
	\frac{\PP(x^{-1}\bb{\xi}^{t+m}\in\cdot )}
	{\PP(\abs{\xi_0}>x)}\\
	&
	\stackrel{v}{\to}
	\alpha 
	\sum_{u\in B^{(t+m)}}
	\int_{\R\setminus\{0\}}
	\abs{x}^{-\alpha-1}\bigl(
	p_\xi\I{(0,\infty)}(x)+(1-p_\xi)\I{(-\infty,0)}(x)\bigr)
	\I{\{x\bb{1}_u\in\cdot\}}\dd x,
	\end{aligned}
\]
as $x\to\infty$,
where $\bb{1}_u$ is an element in $\R^{\abs{B^{(t+m)}}}$ with $0$'s in all indices except for index $u$, where it takes the value $1$. Now define $\bb{\psi}^u=(\psi_{-u+v})_{v\in B^{(m)}}$ with $\psi_u=0$ for $u\notin B^{(t)}$. Then, as $x\to\infty$,
\begin{equation}\label{eq:Zregvarconv}
\begin{aligned}
&
\frac{\PP(x^{-1}\bb{Z}^{t,m}\in\cdot )}{\PP(\abs{\xi_0}>x)}\\
&\stackrel{v}{\to}\alpha \sum_{u\in B^{(t+m)}}\int_{\R\setminus\{0\}}\abs{x}^{-\alpha-1}\big(p_\xi\I{(0,\infty)}(x)+(1-p_\xi)\I{(-\infty,0)}(x)\big)\I{\{x\bb{\psi}^u\in\cdot\}}\dd x.
\end{aligned}
\end{equation}
Recalling that $\norm{\bb{Z}^{t,m}}=\max_{v\in B^{(m)}}\abs{Z^{t}_v}$, we find from (\ref{eq:Zregvarconv}) that
\begin{equation}\label{eq:normZconv}
	\frac{\PP(\norm{\bb{Z}^{t,m}}>x)}{\PP(\abs{\xi_0}>x)}
	\stackrel{v}{\to}
	\sum_{u\in B^{(t+m)}}\norm{\bb{\psi}^u}^{\alpha}.
\end{equation}
Furthermore, combining (\ref{eq:Zregvarconv}) and (\ref{eq:normZconv}), we find that
\begin{equation}\label{eq:Zspectralconv}
\begin{aligned}
&\frac{\PP(\norm{\bb{Z}^{t,m}}>x,\bb{Z}^{t,m}/\norm{\bb{Z}^{t,m}}\in\cdot )}{\PP(\norm{\bb{Z}^{t,m}}>x)}\\
&\stackrel{v}{\to}\Big(\sum_{u\in B^{(t+m)}}\norm{\bb{\psi}^u}^{\alpha}\Big)^{-1} \sum_{u\in B^{(t+m)}}\norm{\bb{\psi}^u}^{\alpha}\big(p_\xi\I{\{\bb{\psi}^u/\norm{\bb{\psi}^u}\in\cdot\}}+(1-p_\xi)\I{\{-\bb{\psi}^u/\norm{\bb{\psi}^u}\in\cdot\}}\big).
\end{aligned}
\end{equation}
This shows that $(Z_v^t)$ is regularly varying, and thus Assumption~\ref{ass:randomfield} is satisfied. Let the family $\bb{\Theta}^{t,m}=(\Theta^{t,m}_v)_{v\in B^{(m)}}$ represent the limit distribution in (\ref{eq:Zspectralconv}), i.e. the spectral distribution. Then,
\[
	\EE(\Theta^{t,m}_0)_+^\alpha
	=\Big(\sum_{u\in B^{(t+m)}}\norm{\bb{\psi}^u}^{\alpha}\Big)^{-1}
	\sum_{u\in B^{(t+m)}}\big(p_\xi(\bb{\psi}^u_0)^\alpha_++(1-p_\xi)(\bb{\psi}^u_0)^\alpha_-\big),
\]
and 
\begin{align*}
	\MoveEqLeft
	\EE\Big[\Big((Y_0\Theta^{t,m}_0)_+^\alpha-(\max_{v\in A^{(m)}_0}Y_v\Theta^{t,m}_v)_+^\alpha\Big)_+\mid \bb{Y}=\hat{\bb{Y}}\Big]\\ &
	=\Big(\sum_{u\in B^{(t+m)}}\norm{\bb{\psi}^u}^{\alpha}\Big)^{-1}\sum_{u\in B^{(t+m)}}\Big[p_\xi(\hat{Y}_0\bb{\psi}^u_0)^\alpha_++(1-p_\xi)(\hat{Y}_0\bb{\psi}^u_0)^\alpha_- \\ &
	\quad 
	- \max_{v\in \Anm}\Big(p_\xi(\hat{Y}_v\bb{\psi}^u_v)^\alpha_++(1-p_\xi)(\hat{Y}_v\bb{\psi}^u_v)^\alpha_-\Big)\Big]_+.
\end{align*}
Thus, 
\begin{align*}
	\MoveEqLeft
	\frac{\EE^{\bb{Y}}\Big[\EE\Big[\Big((Y_0\Theta^{t,m}_0)_+^\alpha-(\max_{v\in A^{(m)}_0}Y_v\Theta^{t,m}_v)_+^\alpha\Big)_+\mid \bb{Y}=\hat{\bb{Y}}\Big]\mid\hat{\Ii}\Big]}
	{\EE(\Theta^{t,m}_0)_+^\alpha}\\ &
	=\Big(\sum_{u\in B^{(t+m)}}\big(p_\xi(\bb{\psi}^u_0)^\alpha_++(1-p_\xi)(\bb{\psi}^u_0)^\alpha_-\big)\Big)^{-1} \\ &
	\quad
	\times \sum_{u\in B^{(t+m)}}\EE^{\bb{Y}}\Big[\Big(p_\xi(\hat{Y}_0\bb{\psi}^u_0)^\alpha_++(1-p_\xi)(\hat{Y}_0\bb{\psi}^u_0)^\alpha_- \\ &
	\qquad
	-\max_{v\in \Anm}\Big(p_\xi(\hat{Y}_v\bb{\psi}^u_v)^\alpha_++(1-p_\xi)(\hat{Y}_v\bb{\psi}^u_v)^\alpha_-\Big)\Big)_+\mid \hat{\Ii}\Big]\\&
	=\Big(\sum_{u\in B^{(t+m)}}\big(p_\xi(\psi_{-u})^\alpha_++(1-p_\xi)(\psi_{-u})^\alpha_-\big)\Big)^{-1}\\&
	\quad
	\times \sum_{u\in B^{(t+m)}}\EE^{\bb{Y}}\Big[\Big(p_\xi(\hat{Y}_{-u}\psi_{-u})^\alpha_++(1-p_\xi)(\hat{Y}_{-u}\psi_{-u})^\alpha_-\\&
	\qquad
	-\max_{v\in \Anm}\Big(p_\xi(\hat{Y}_{-u+v}\psi_{-u+v})^\alpha_++(1-p_\xi)(\hat{Y}_{-u+v}\psi_{-u+v})^\alpha_-\Big)\Big)_+\mid \hat{\Ii}\Big],
\end{align*}
where we in the second equality have used that the conditional expectation given $\hat{\Ii}$ is invariant to translations. Recalling that we consider $\psi_u=0$ for all $u\in B^{(t)}$, this has the following limit as $m$ tends to infinity,
\[
	\frac{\EE^{\bb{Y}}\Big[\max_{v\in B^{(t)}}\Big(p_\xi(\hat{Y}_v\psi_v)^\alpha_++(1-p_\xi)(\hat{Y}_v\psi_v)^\alpha_-\Big)\Big)_+\mid \hat{\Ii}\Big]}
	{\sum_{v\in B^{(t)}}\big(p_\xi(\psi_v)^\alpha_++(1-p_\xi)(\psi_v)^\alpha_-\big)}.
\]
Finally, letting $t$ tend to infinity gives the desired result.
\end{proof}

\section{Example~\ref{ex:garch}: GARCH(1,1) volatility}\label{sec:garch}

Recall that we consider the one-dimensional case and let $(Z_v)_{v\in \ZZ}$ be given as the solution to
\[
	Z_v^2=\alpha_0+Z_{v-1}^2(\alpha_1\xi_{v-1}^2+\beta_1),
\]
where $(\xi_v)_{v\in\ZZ}$ is an i.i.d. sequence. We make further assumptions, as previously described, ensuring that $(Z_v)_{v\in\ZZ}$ is stationary, regularly varying, strongly mixing and satisfies the anti-clustering condition. The index of the regular variation is $2\alpha$, where $\alpha$ is the solution to $\EE(\alpha_1\xi_1^2+\beta_1)^\alpha=1$. For the result we combine notation and techniques from \cite{MikoschRezapour2013} with Corollary~\ref{cor:Zisstrongmixing}. For ease of notation, define for each $v\in\ZZ$
\[
	A_v=\alpha_1\xi_v^2+\beta_1 .
\]

\begin{lemma}
Let $(Z_v)_{v\in \ZZ}$ be defined as above and assume that $(Y_v)_{v\in\ZZ}$ is a stationary process, independent of $(\xi_v)_{v\in\ZZ}$, with $\EE\abs{Y_0}^{\gamma}<\infty$ for some $\gamma>2\alpha$. Then the extremal functional $\eta$ is given by $\eta((y_v))=\lim_{K\to\infty}\lim_{m\to\infty}\eta^{K,m}((y_v))$, where
\begin{align*}
&\eta^{K,m}((y_v))\\
&=\EE^{\bb{Y}}
	\Bigl[\EE\Bigl(\bigl((Y_0\I{\{\abs{Y_0}\le K\}})_+^\alpha
	-(\max_{1\leq v\leq m}Y_v\I{\{\abs{Y_0}\le K\}}\prod_{i=1}^v\sqrt{A_i})_+^\alpha \bigr)_+\mid\bb{Y}=\hat{\bb{Y}}\Bigr)\mid\hat{\Ii}\Bigr]((y_v)).
\end{align*}

\end{lemma}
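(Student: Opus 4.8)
The plan is to verify that the GARCH(1,1) volatility field $(Z_v)_{v\in\ZZ}$ together with the independent process $(Y_v)$ satisfies the hypotheses of Corollary~\ref{cor:Zisstrongmixing}, and then to compute the explicit form of the extremal functional by identifying the spectral field of $(Z_v)_{v\in B^{(m)}}$. The verification of the hypotheses is essentially already recorded in the discussion of Example~\ref{ex:garch}: under the stated assumptions, the results of \cite{Basrak2002} (building on \cite{Kesten1973}) give that $(Z_v)$ is stationary, regularly varying with index $2\alpha$, and strongly mixing with geometric rate, while \cite[Lemma~4.3]{MikoschRezapour2013} supplies the anti-clustering condition \ref{eq:fieldass3}. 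The tail-balance condition \eqref{eq:tailbalance} holds with $p=1$ since $Z_v>0$ almost surely. Hence Corollary~\ref{cor:Zisstrongmixing} applies with all approximating fields equal to $(Z_v)$ itself, and it remains to evaluate its formula for $\eta^{K,m}$.

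First I would recall from \cite{Basrak2002} (or \cite{MikoschRezapour2013}) the description of the spectral field associated with the finite-dimensional distributions of $(Z_v)$. Since $Z_v^2 = \alpha_0 + Z_{v-1}^2 A_{v-1}$ with $A_{v-1}=\alpha_1\xi_{v-1}^2+\beta_1$, in the extreme regime the additive constant $\alpha_0$ is negligible and the forward recursion behaves like $Z_v \approx Z_0 \prod_{i=1}^v \sqrt{A_i}$ for $v\geq 1$ (with the $A_i$ independent of $Z_0$), while the backward indices decouple from the large value of $Z_0$. More precisely, I would argue that conditionally on $\norm{(Z_v)_{v\in B^{(m)}}}$ being large and attained at a nonnegative index, the spectral field $(\Theta_v)_{v\in B^{(m)}}$ is (up to normalisation) $\Theta_v = \prod_{i=1}^{v}\sqrt{A_i}$ for $v\geq 0$ (empty product $=1$ for $v=0$) and $\Theta_v = 0$ for $v<0$; the contributions from the left and from ties are handled as in \cite{MikoschRezapour2013}. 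Because $Z_v\geq 0$, only the positive parts survive and the $\EE(\Theta_0)_+^\alpha$ in the denominator of Corollary~\ref{cor:Zisstrongmixing} equals $\EE(\Theta_0)^\alpha$, which — after accounting for the normalisation by $\norm{\bb\Theta^m}$ and the fact that $\EE A_i^\alpha = 1$ — is a positive constant. Substituting this spectral field into the formula of Corollary~\ref{cor:Zisstrongmixing}, with the set $\Anm = \{1,\dots,m\}$ on the positive side (using the natural order on $\ZZ$), and cancelling the normalising constant, yields exactly the stated expression for $\eta^{K,m}((y_v))$.

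The main obstacle I anticipate is the bookkeeping around the spectral field: one must carefully extract from the tail asymptotics of the stochastic recurrence equation (as in \cite{Basrak2002,Kesten1973}) not just the marginal regular variation of $Z_v$ but the joint spectral structure over a full box $B^{(m)}\subseteq\ZZ$, and then match the index conventions (which half-line $\Anm$ refers to, how ties at the maximum are resolved, normalisation by $\norm{\bb\Theta^m}$) so that the generic formula of Corollary~\ref{cor:Zisstrongmixing} collapses to the GARCH-specific one. In particular one needs that $\EE\bigl[(\prod_{i=1}^v\sqrt{A_i})^\alpha\bigr]=\EE[A_1^\alpha]^{v}=1$ for all $v$, which is what allows the denominator and the normalising sum $\sum_{u}\norm{\bb\psi^u}^\alpha$-type quantity to simplify cleanly, and one must check that $\EE^{\bb Y}[\,\cdot\mid\hat\Ii]$ commutes with the relevant translations exactly as in the moving-average computation of Section~\ref{sec:movingaverage}. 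Once the spectral field is pinned down, the remaining algebra is a direct substitution, and letting $m\to\infty$ (and then $K\to\infty$, as in the statement of Theorem~\ref{thm:maxtail}) finishes the proof.
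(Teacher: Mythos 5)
Your proposal follows the paper's proof in all essentials: verify that Corollary~\ref{cor:Zisstrongmixing} applies via \cite{Basrak2002} and \cite[Lemma~4.3]{MikoschRezapour2013}, identify the spectral field of $(Z_v)_{v\in B^{(m)}}$ from the stochastic recurrence structure, and substitute into the generic formula, using $\EE A_1^{\alpha}=1$ to make the normalising constants disappear. One inaccuracy is worth flagging: the spectral field of the window $B^{(m)}$ in the sense of \eqref{eq:regvarmulti} is \emph{not} the tail process anchored at $0$ with $\Theta_v=0$ for $v<0$. By \cite[Lemma~4.2]{MikoschRezapour2013} the limiting shape is $\bb R^m=(1,\sqrt{A_{-m}},\dots,\sqrt{A_{-m}\cdots A_{m-1}})$, anchored at the \emph{left edge} of the window and strictly positive at every index (and the spectral measure is $\bb R^m/\norm{\bb R^m}$ size-biased by $\norm{\bb R^m}^{2\alpha}$); for the stationary solution a large value at time $0$ forces the whole past of the window to be of the same order, not negligible. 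The error happens to be harmless here: only the coordinates $0,1,\dots,m$ enter $\eta^{K,m}$, the ratio of the two spectral expectations is homogeneous of the right degree so the size-biasing weight and the common random factor $R_0=\sqrt{A_{-m}\cdots A_{-1}}$ (independent of everything relevant, with $\EE R_0^{2\alpha}=1$) cancel, and what remains is exactly the stated expression. But this cancellation is precisely the ``bookkeeping'' you defer, so it should be carried out explicitly rather than asserted via an incorrect description of the spectral field.
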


\begin{proof}
We note that the assumptions of Corollary~\ref{cor:Zisstrongmixing} are satisfied. Define for each $m\in\NN$
\[
	\bb{Z}^m=(Z_{-m},\ldots,Z_m)
\]
and
\[
	\bb{R}^m=(R_{-m},\ldots,R_m)
	=\big(1, \sqrt{A_{-m}}, \sqrt{A_{-m}A_{-m+1}},\ldots, \sqrt{A_{-m}\cdots A_{m-1}}\big).
\]
From \cite[Lemma~4.2]{MikoschRezapour2013} we then have 
\[
	\frac{\PP(x^{-1}\bb{Z}^m\in{}\cdot{} )}{\PP(Z_0>x)}
	\stackrel{v}{\to} 
	2\alpha\int_0^\infty s^{-2\alpha-1}\PP(s\bb{R}^m\in\cdot)\,\dd s .
\]
Following arguments similar to \cite[Example~4.5]{MikoschRezapour2013} we  find that
\begin{align*}
	\frac{\PP(\norm{\bb{Z}^{m}}>x,\bb{Z}^{m}/\norm{\bb{Z}^{m}}\in{}\cdot{} )}{\PP(\norm{\bb{Z}^{m}}>x)} &
	\stackrel{v}{\to} 
	\frac{2\alpha\int_0^\infty s^{-2\alpha-1}\PP(s\norm{\bb{R}^m}\I{\{\bb{R}^m/\norm{\bb{R}^m}\in\cdot\}}>1)\,\dd s}{\EE\norm{\bb{R}^m}^{2\alpha}}\\ & 
	=\, 
	\frac{\EE\norm{\bb{R}^m}^{2\alpha}\I{\{\bb{R}^m/\norm{\bb{R}^m}\in\cdot\}}}{\EE\norm{\bb{R}^m}^{2\alpha}}.
\end{align*}
We let $\bb{\Theta}^m=(\Theta^m_v)_{\abs{v}\leq m}$ represent the limiting distribution. Then, using the independence between $(Y_v)$ and $(\xi_v)$ and the fact that each $A_v$ is non-negative with $\EE A_v^{\alpha}=1$, we find that the expression $\eta^{K,m}((y_v))$ as defined in Corollary~\ref{cor:Zisstrongmixing} equals the desired result.
\end{proof}

\begin{appendix}

\section{Ergodic theory}\label{app:ergodictheorem}

Here we formulate a spatial version of the classical Birkhoff--Khinchin theorem. The theorem, as formulated in Lemma~\ref{lem:ergodictheorem} below, relies on results from \cite{Krengel1985}; see also Section~5 of \cite{StehrRonnNielsen2020} for a more detailed exposition of the application of ergodic theory to random fields, and, in particular, \cite[Theorem~9]{StehrRonnNielsen2020}, which has a similar but simpler formulation of the theorem.

Let $S$ be the set of all real-valued fields $(x_v)_{v\in\Zd}$, and let $\Bb(S)$ be the corresponding $\sigma$-algebra generated by all coordinate projections. Define the translation map in the $\ell$'th direction, $T_\ell:S\to S$, as
\[
T_\ell\big((x_{v_1,\ldots,v_d})_{(v_1,\ldots,v_d)\in\Zd}\big)=\big((x_{v_1,\ldots,v_{\ell-1},v_\ell+1,v_{\ell+1},\ldots,v_d})_{(v_1,\ldots,v_d)\in\Zd}\big)
\]
for each $\ell=1,\ldots,d$. Let $\hat{\Ii}$ be the invariant $\sigma$-algebra on $S$, i.e. the $\sigma$-algebra consisting of all sets in $\Bb(S)$ invariant to $T_i$ for $i=1,\ldots,d$.

For a random field $\bb{Y}=(Y_v)_v$ on $(S,\Bb(S))$ with distribution $\bb{Y}(\PP)$ and a measurable function $h: S\to\R$, we let $\EE^{\bb{Y}}(h\mid \hat{\Ii})$ denote the conditional expectation on $(S,\Bb(S),\bb{Y}(\PP))$
of $h$ given $\hat{\Ii}$. When the concrete definition of $h$ is relevant, the conditional expectation will be written on the form $\EE^{\bb{Y}}(h((\hat{Y}_v))\mid \hat{\Ii})$, where $\hat{\bb{Y}}=(\hat{Y}_v)_{v\in\Zd}$ is the identity map on $(S,\Bb(S))$.

It should be noted that $\EE^{\bb{Y}}(h\mid \hat{\Ii})$ is an $\hat{\Ii}$-measurable function $S\to\R$.

\begin{lemma}\label{lem:ergodictheorem}
Let $\bb{Y}=(Y_v)_{v\in\Zd}$ be a stationary random field, and assume that $(D_n)$ is a sequence satisfying Assumption~\ref{ass:geometry}. If $h: S\to\R$ is a measurable function satisfying $\EE \abs{h((Y_u)_{u\in \Zd})}<\infty$, then it holds that
\[
	\frac{1}{\abs{D_n}}\sum_{v\in D_n} h((y_{u+v})_{u\in \Zd})
	\to \EE^{\bb{Y}} (h\mid \hat{\Ii})((y_u)_{u\in \Zd})
\]
for almost all realizations $(y_v)_v$ of $(Y_v)_v$.

\end{lemma}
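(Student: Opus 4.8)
The plan is to reduce the statement to a known multiparameter ergodic theorem — specifically the multiparameter Birkhoff--Khinchin theorem over increasing boxes (see \cite{Krengel1985}, and the random-field formulation in \cite[Theorem~9]{StehrRonnNielsen2020}) — and then transfer it from the box-averages to the $D_n$-averages using the approximation properties built into Assumption~\ref{ass:geometry}. First I would set up the measure-theoretic dynamical system $(S,\Bb(S),\bb Y(\PP))$ with the commuting measure-preserving transformations $T_1,\dots,T_d$, and define the bounded-by-$L^1$ observable $g = h\circ(\text{coordinate embedding})$; stationarity of $(Y_v)$ is exactly the statement that $\bb Y(\PP)$ is invariant under each $T_\ell$. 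The classical multiparameter pointwise ergodic theorem then gives, for $\bb Y(\PP)$-almost every $(y_u)$,
\[
\frac{1}{\abs{R_n}}\sum_{v\in R_n} h\bigl((y_{u+v})_{u\in\Zd}\bigr) \to \EE^{\bb Y}(h\mid\hat\Ii)\bigl((y_u)\bigr)
\]
whenever $(R_n)$ is an increasing sequence of boxes exhausting $\Zd$; the regularity of $(D_n)$ supplies precisely such a dominating box sequence $(D_n')$.

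The core of the argument is then the passage from boxes to the general regular sets $D_n$. Here I would use the squeeze provided by the slowly growing boxes $J_z^n$ of Assumption~\ref{ass:geometry}: writing $\dnm \subseteq D_n \subseteq \dnp$ with $\abs{\dnp} - \abs{\dnm} = o(\abs{D_n})$, and noting that each of $\dnm$, $\dnp$ is a finite disjoint union of translated copies of the block $\bb t_n([0,1)^d)\cap\Zd$, one can compare $\abs{D_n}^{-1}\sum_{v\in D_n} h(\cdots)$ with the corresponding averages over $\dnm$ and $\dnp$. The difference is controlled by $\abs{D_n}^{-1}\sum_{v\in \dnp\setminus\dnm}\abs{h((y_{u+v}))}$; since $\dnp\setminus\dnm$ is itself a union of $J_z^n$-blocks and $\abs{\dnp\setminus\dnm}/\abs{D_n}\to 0$, this tends to zero once we know a Cesàro-type bound on the tail averages of $\abs h$. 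That control is standard: along the exhausting box sequence, $\abs{D_n'}^{-1}\sum_{v\in D_n'}\abs{h((y_{u+v}))}$ converges (again by the ergodic theorem applied to $\abs h$), hence is bounded, and more refined uniform integrability / maximal-inequality arguments (the multiparameter maximal ergodic inequality) show that no $o(\abs{D_n})$-sized sub-collection of blocks can carry a non-vanishing fraction of the mass. Finally, the box averages over $\dnm$ and over $\dnp$ both converge to $\EE^{\bb Y}(h\mid\hat\Ii)((y_u))$ — this follows because $\dnm$ and $\dnp$ are sandwiched between $D_n'$-type boxes of comparable cardinality, or by a direct averaging-of-block-averages argument — so the squeeze closes.

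The main obstacle I anticipate is precisely this last comparison step: turning "$\abs{\dnp}-\abs{\dnm}=o(\abs{D_n})$" into "the contribution of $\dnp\setminus\dnm$ to the average is $o(1)$ almost surely." For a fixed box this is trivial, but $\dnp\setminus\dnm$ is an $n$-dependent thin shell whose \emph{location} moves with $n$, so one cannot simply invoke convergence on a fixed set. The clean way around this is to dominate: since $(D_n)$ is regular, $\dnp \subseteq (D_n')^+$ for a slightly enlarged box, the tail sums $\sum_{v\in(D_n')^+}\abs{h((y_{u+v}))}$ grow like $\abs{D_n'}\cdot\EE^{\bb Y}(\abs h\mid\hat\Ii)$, and the $o(\abs{D_n})$ bound on the shell cardinality combined with the multiparameter maximal ergodic inequality (which bounds, uniformly, the maximal average of $\abs h$ over sub-blocks) forces the shell contribution to vanish. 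I would also need to check measurability and the almost-everywhere-versus-in-probability distinction carefully, but these are routine given the cited references.
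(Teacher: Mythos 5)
There is a genuine gap at exactly the step you yourself flag as the core of the argument: the passage from boxes to the sets $\dnm$ and $\dnp$. These sets are unions of $J_z^n$-blocks approximating an arbitrary regular set---they are not boxes---so convergence of the ergodic averages over them is just as hard as the statement being proved. Asserting it ``because $\dnm$ and $\dnp$ are sandwiched between $D_n'$-type boxes of comparable cardinality'' appeals only to regularity, and regularity alone does not imply pointwise convergence to $\EE^{\bb Y}(h\mid\hat\Ii)$: for $d=1$ the sets $\{0,2,4,\dots,2n\}$ are dominated by boxes of at most twice their cardinality, yet their averages converge to the conditional expectation given the $T^2$-invariant sets, not the $T$-invariant ones. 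The alternative ``averaging-of-block-averages'' route runs into the moving-averages problem (the blocks $J_z^n$ are anchored at $n$-dependent locations, and pointwise convergence of moving block averages of an $L^1$ function can fail without extra structure), and the same issue undermines the claim that a maximal ergodic inequality controls the shell $\dnp\setminus\dnm$, which is an $n$-dependent thin set whose location moves with $n$ and which could in principle concentrate where $\abs{h}$ is large.

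The missing ingredient is the asymptotic translation-invariance (F\o{}lner) condition $\abs{D_n\Delta(D_n+\bb e_\ell)}/\abs{D_n}\to 0$, $\ell=1,\dots,d$, which---together with regularity, which feeds the maximal inequality \cite[Corollary~6.2.7]{Krengel1985}---is the actual hypothesis under which Krengel's multiparameter pointwise ergodic theorem \cite[Theorem~6.2.8]{Krengel1985} applies to the sequence $(D_n)$ directly, with no sandwiching needed. The paper's proof consists precisely of verifying this condition from \eqref{eq:geometricassumption}: for large $n$ the symmetric difference $D_n\Delta(D_n+\bb e_\ell)$ is contained in the union $E_n$ of $\dnp\setminus\dnm$ with its neighboring $J^n$-boxes, and $\abs{E_n}\le K\abs{\dnp\setminus\dnm}=o(\abs{D_n})$. (The remaining points---extending from $\NN_0^d$ to $\Zd$ by splitting into $2^d$ orthants and checking the limit is the same conditional expectation in each direction---are routine, as you note.) Even if you insisted on keeping your sandwich structure, you would still have to verify the F\o{}lner condition for $\dnm$ and $\dnp$ separately, which is no easier than verifying it for $D_n$ itself, so the sandwich buys nothing here.
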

We say that $\bb{Y}=(Y_v)_v$ is ergodic, if $\hat{\Ii}$ is trivial. That is $\bb{Y}(\PP)(A)\in\{0,1\}$ for all $A\in \hat{\Ii}$. Note that if $\bb{Y}$ is ergodic, then the limit is non-random, i.e. independent of the specific realization of $\bb{Y}$.

\begin{proof}
In the case, where $(D_n)$ are boxes in $\NN_0^d$, Lemma~\ref{lem:ergodictheorem} is identical to \cite[Theorem~6.2.8]{Krengel1985}. Extending that theorem to boxes in $\Zd$ follows by dividing the index set along the coordinate axes into $2^d$ subsets and applying the result to each sequence of subsets separately. That the conditional expectation in the limit is the same in all sub-directions follows from the fact that, for each $\ell=1,\ldots,d$, invariance to $T_\ell$ is equivalent to invariance to $T_\ell^{-1}$. 

Extending the result to allow for more general sequences $(D_n)$, as given in Assumption~\ref{ass:geometry}, is possible, since the proof  of \cite[Theorem~6.2.8]{Krengel1985} only relies on \cite[Corollary~6.2.7]{Krengel1985}, which is clearly satisfied in the extended framework as well, and on the geometrical requirement
\begin{equation}\label{eq:krengelgeometricassumption}
	\frac{\abs{D_n\Delta(D_n+\bb e_\ell)}}{\abs{D_n}}\to 0
\end{equation}
for each $\ell=1,\ldots,d$. Here $A\Delta B=(A\cup B)\setminus (A\cap B)$ and $\bb e_\ell$ denotes the unit vector in direction $\ell$. 

To see that \eqref{eq:krengelgeometricassumption} is satisfied, let $K$ be the number of neighboring boxes in all directions, including via edges and corners, of the box $J_0^n$. Now define $E_n$ to be the union of $D_n^+\setminus D_n^-$ with all its neighboring boxes, and note that $\abs{E_n}\leq K\abs{D_n^+\setminus D_n^-}$. Also note that for $n$ large enough it holds that
\[
	D_n\Delta(D_n+\bb e_\ell)
	\subseteq E_n.
\]
Now \eqref{eq:krengelgeometricassumption} follows from \eqref{eq:geometricassumption}.
\end{proof}

\section{Results on regular variation}\label{app:regvar}
In this section we derive a limit of
\[
\sum_{v\in D_n}
	\PP \bigl( M_{yZ}(\Avm)\leq a_nx<y_vZ_v \bigr)
\]
for almost all realizations $(y_v)$ of $(Y_v)$, and a limit for the same expression with the approximating fields $(Z_v^t)$ for each $t\in\NN$. The proof strategy will be to use the ergodic result Lemma~\ref{lem:ergodictheorem} from Appendix~\ref{app:ergodictheorem} in combination with the spectral behavior of $(Z_v)$. There will be some similarities to the proof of the multivariate version of Breiman's lemma. See e.g. the proof of \cite[Proposition~A.1]{Basrak2002}. The results will be formulated and proven in terms of a general field $(R_v)$ acting as a place-holder for either $(Z_v)$ or $(Z_v^t)$.

\begin{lemma}\label{lem:fixedy}
Let $(R_v)_\vZ$ be a stationary and regularly varying random field with index $\alpha$, and, for each $m\in\NN$, let $(\Theta_v^m)_{v\in B^{(m)}}$ represent the spectral field of $\bb{R}^m = (R_v)_{v\in B^{(m)}}$. Furthermore, let $(y_v)_{v\in B^{(m)}}$ be real constants. If $(a_n)$ is chosen such that $\abs{D_n}^{-1}\sim \PP(R_0>a_n)$, then
\[
	\abs{D_n}\PP \bigl(M_{yR}(\Anm)>a_nx \bigr)
	\to x^{-\alpha} 
	\frac{\EE[(\max_{v\in \Anm}y_v\Theta_v^m)_+^\alpha]}
	{\EE(\Theta_0^m)_+^\alpha}
\]
as $n\to\infty$. 
\end{lemma}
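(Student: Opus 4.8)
The plan is to exploit the regular variation of the finite-dimensional vector $\bb R^m = (R_v)_{v\in B^{(m)}}$ directly, since the event $\{M_{yR}(\Anm) > a_n x\}$ depends only on the coordinates of $\bb R^m$ (note $\Anm \subseteq B^{(m)}$). Write $\bb y = (y_v)_{v\in B^{(m)}}$ for the fixed real constants, and consider the map $\varphi_{\bb y}\colon \R^{B^{(m)}} \to \R$ given by $\varphi_{\bb y}(\bb r) = \max_{v\in\Anm} y_v r_v$. This map is continuous and positively homogeneous of degree one, so $\{\bb r : \varphi_{\bb y}(\bb r) > x\}$ scales like $\{\bb r : \varphi_{\bb y}(\bb r) > 1\}$ under $\bb r \mapsto \bb r / x$. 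I would first express
\[
	\PP\bigl(M_{yR}(\Anm) > a_n x\bigr)
	= \PP\bigl(\varphi_{\bb y}(\bb R^m) > a_n x\bigr)
	= \PP\Bigl(\norm{\bb R^m} > a_n x,\ \varphi_{\bb y}\bigl(\bb R^m/\norm{\bb R^m}\bigr) > \frac{a_n x}{\norm{\bb R^m}}\Bigr),
\]
and then rewrite this ratio against $\PP(\norm{\bb R^m} > a_n x)$ using the polar decomposition in \eqref{eq:regvarmulti}.

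The second step is to pass to the limit. Using regular variation in the form \eqref{eq:regvarmulti}, for a suitable continuity-set functional $g$ one has $\PP(\norm{\bb R^m} > a_n x s,\ \bb R^m/\norm{\bb R^m} \in \cdot)/\PP(\norm{\bb R^m} > a_n x) \to s^{-\alpha}\PP(\bb\Theta^m\in\cdot)$. The standard argument (the one used in the proof of the multivariate Breiman lemma, e.g.\ \cite[Proposition~A.1]{Basrak2002}) integrates over the radial variable: one gets
\[
	\frac{\PP(\varphi_{\bb y}(\bb R^m) > a_n x)}{\PP(\norm{\bb R^m} > a_n x)}
	\ \longrightarrow\ \EE\bigl[(\varphi_{\bb y}(\bb\Theta^m))_+^\alpha\bigr]
	= \EE\Bigl[\bigl(\max_{v\in\Anm} y_v\Theta_v^m\bigr)_+^\alpha\Bigr]
\]
as $n\to\infty$, where the exponent $\alpha$ and the positive part appear from $\int_0^\infty \I{\{s\,\varphi_{\bb y}(\bb\Theta^m)_+ > 1\}}\,\alpha s^{-\alpha-1}\,\dd s = (\varphi_{\bb y}(\bb\Theta^m))_+^\alpha$. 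One must check that $\{\bb\theta : \varphi_{\bb y}(\bb\theta) = 0\}$ carries no mass issue — more precisely that $\varphi_{\bb y}$ restricted to $\mathbb S^{|B^{(m)}|-1}$ has a continuous-enough structure so that the relevant sets are $\PP(\bb\Theta^m\in\cdot)$-continuity sets; this is routine because $\varphi_{\bb y}$ is continuous and the set where it equals a given value $c\neq 0$ has empty interior in the sphere unless $\bb y$ is degenerate, and the $c=0$ boundary contributes nothing after the $s$-integration.

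The final step is bookkeeping with the norming constants. Since $\PP(\norm{\bb R^m} > a_n) = \PP(R_0 > a_n)\cdot \bigl(\PP(\norm{\bb R^m}>a_n)/\PP(R_0>a_n)\bigr)$ and, again by \eqref{eq:regvarmulti} applied with the coordinate projection onto index $0$, the ratio $\PP(\norm{\bb R^m}>a_n)/\PP(R_0>a_n) \to 1/\EE(\Theta_0^m)_+^\alpha$ (here one uses that $\PP(R_0 > t)/\PP(\norm{\bb R^m} > t) \to \EE(\Theta_0^m)_+^\alpha$, which requires $\EE(\Theta_0^m)_+^\alpha > 0$; this holds since $R_0$ genuinely has a regularly varying right tail, as noted after Assumption~\ref{ass:randomfield}). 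Combining with $\abs{D_n}^{-1}\sim\PP(R_0>a_n)$ and the scaling in $x$ gives
\[
	\abs{D_n}\,\PP\bigl(M_{yR}(\Anm) > a_n x\bigr)
	\sim \frac{\PP(\varphi_{\bb y}(\bb R^m) > a_n x)}{\PP(R_0 > a_n)}
	\to x^{-\alpha}\,\frac{\EE[(\max_{v\in\Anm} y_v\Theta_v^m)_+^\alpha]}{\EE(\Theta_0^m)_+^\alpha},
\]
which is the claim. The main obstacle I anticipate is making the boundary/continuity-set verification in Step~2 fully rigorous — i.e.\ justifying that vague convergence of the spectral measure may be applied to the particular (non-closed, non-open) functional event $\{\varphi_{\bb y} > \cdot\}$ and interchanged with the radial integration; everything else is a direct transcription of the Breiman-type computation.
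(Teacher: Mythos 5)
Your proposal is correct and follows essentially the same route as the paper's proof: both exploit the regular variation of the finite-dimensional vector $\bb R^m$ via the polar decomposition \eqref{eq:regvarmulti}, normalize by $\PP(\norm{\bb R^m}>a_n)$ to obtain the limit $x^{-\alpha}\EE[(\max_{v\in\Anm}y_v\Theta_v^m)_+^\alpha]$, and then divide by the ratio $\PP(R_0>a_n)/\PP(\norm{\bb R^m}>a_n)\to\EE(\Theta_0^m)_+^\alpha$ together with the choice $\abs{D_n}\PP(R_0>a_n)\to1$. The only difference is cosmetic: the paper compresses the Breiman-type radial integration and continuity-set check into the phrase ``standard extension arguments,'' which you spell out explicitly.
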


\begin{proof}
The result is trivial in the case with $y_v=0$ for all $v\in B^{(m)}$, so it can without loss of generality be assumed that $y_v\neq 0$ for at least one $v\in B^{(m)}$. With standard extension arguments it follows from (\ref{eq:regvarmulti}) that
\begin{align*}
	\frac{\PP(M_{yR}(\Anm)>a_nx)}{\PP (\norm{\bb{R}^m}>a_n)} &
	=\frac{\PP\bigl(\max_{v\in \Anm}\big(y_v\frac{R_v}{\norm{\bb{R}^m}}\big)_+\norm{\bb{R}^m}>a_nx \bigr)}{\PP(\norm{\bb{R}^m}>a_n)} \\&
	\to x^{-\alpha}\EE((\max_{v\in \Anm}y_v\Theta_v^m)_+^\alpha)
\end{align*}
and
\begin{equation}\label{eq:tailnormR}
	\frac{\PP(R_0>a_n)}{\PP(\norm{\bb{R}^m}>a_n)}
	=
	\frac{\PP\bigl(\bigl(\tfrac{R_0}{\norm{\bb{R}^m}}\bigr)_+\norm{\bb{R}^m}>a_n)}{\PP(\norm{\bb{R}^m}>a_n)}
	\to \EE(\Theta_0^m)_+^\alpha .
\end{equation}
Now the desired result follows from the choice $\abs{D_n}\PP(R_0>a_n)\to 1$.
\end{proof}

\begin{lemma}\label{lem:extremalindexgivenyfixedm}
Let $(R_v)_\vZ$ be a stationary and regularly varying random field with index $\alpha$, and, for each $m\in\NN$, let $(\Theta_v^m)_{v\in B^{(m)}}$ represent the spectral field of $\bb{R}^m = (R_v)_{v\in B^{(m)}}$. Let $(Y_v)$ be a stationary random field such that $\EE\abs{Y_0}^\gamma<\infty$ for some $\gamma>\alpha$.  Assume that $(Y_v)$ is independent of $(R_v)$ and $(\Theta^{m}_v)_{v\in B^{(m)}}$ for all $m\in\NN$. If $(a_n)$ is chosen such that $\abs{D_n}^{-1}\sim \PP(R_0>a_n)$, then for almost all realizations $(y_v)$ of $(Y_v)$ it holds that
\[
	\sum_{v\in D_n}
	\PP \bigl( M_{yR}(\Avm)\leq a_nx<y_vR_v)
	\to x^{-\alpha}
	\eta_{\bb{R}^m}((y_v)),
\]
where
\begin{equation*}
	\eta_{\bb{R}^m}((y_v))
	=\frac{\EE^{\bb{Y}}
	\Big[\EE\Bigl(\bigl((Y_0\Theta^{m}_0)_+^\alpha-(\max_{v\in \Anm} Y_v\Theta^{m}_v)_+^\alpha \bigr)_+\mid\bb{Y}=\hat{\bb{Y}}\Bigr)\mid\hat{\Ii}\Big]((y_v))}
	{\EE(\Theta_0^{m})_+^\alpha} .
\end{equation*}
\end{lemma}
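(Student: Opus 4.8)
The plan is to compute the sum $\sum_{v\in D_n}\PP(M_{yR}(\Avm)\le a_nx < y_v R_v)$ by first conditioning on the realization $(y_v)$, identifying each summand with a function of finitely many coordinates of the field $(R_v)$ around $v$, and then applying the spatial Birkhoff--Khinchin theorem (Lemma~\ref{lem:ergodictheorem}) to pass to the limit. Write $W=\abs{B^{(m)}}$ and, for a vector $\bb w=(w_u)_{u\in B^{(m)}}\in\R^W$ and fixed reals $(y_v)$, consider the shift-dependent exceedance event. By stationarity of $(R_v)$, the summand equals $\PP\bigl(M_{(y_{u+v})_u R}(\Anm)\le a_nx < y_v R_0\bigr)$, i.e.\ it depends on $v$ only through the window $(y_{u+v})_{u\in B^{(m)}}$ of the deterministic field. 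So the sum has the shape $\sum_{v\in D_n} g_n\bigl((y_{u+v})_{u\in B^{(m)}}\bigr)$ for the function $g_n(\bb w)=\PP(M_{\bb w R}(\Anm)\le a_nx < w_0 R_0)$, where here $w_0$ is the entry at the origin; the issue is that $g_n$ depends on $n$ through $a_n$.

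The key step is therefore to first \emph{normalize}: multiply and divide by $\PP(R_0>a_n)\sim\abs{D_n}^{-1}$. By regular variation of the finite block $\bb R^m$ (cf.\ \eqref{eq:regvarmulti} and the argument in Lemma~\ref{lem:fixedy}), one gets a pointwise limit
\[
	\abs{D_n}\,\PP\bigl(M_{\bb w R}(\Anm)\le a_nx < w_0 R_0\bigr)
	\longrightarrow x^{-\alpha}\,h(\bb w),\qquad
	h(\bb w)=\frac{\EE\bigl[(w_0\Theta_0^m)_+^\alpha-(\max_{v\in\Anm}w_v\Theta_v^m)_+^\alpha\bigr]_+}{\EE(\Theta_0^m)_+^\alpha},
\]
for each fixed $\bb w$; this follows by the same spectral-measure manipulation as in Lemma~\ref{lem:fixedy}, decomposing $\{w_0 R_0 > a_n x\}$ into the exceedance event and subtracting the overshoot from $\Anm\setminus\{0\}$, and using that the event $\{M_{\bb w R}(\Anm)\le a_nx < w_0 R_0\}$ can be written in terms of the self-normalized vector $\bb R^m/\norm{\bb R^m}$. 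Then I would write
\[
	\sum_{v\in D_n}\PP\bigl(M_{yR}(\Avm)\le a_nx < y_vR_v\bigr)
	= \frac{1}{\abs{D_n}}\sum_{v\in D_n}\Bigl(\abs{D_n}\,g_n\bigl((y_{u+v})_u\bigr)\Bigr),
\]
split the bracket as the limit $x^{-\alpha}h$ plus a remainder, apply Lemma~\ref{lem:ergodictheorem} to the main term (with integrand $h$, which is integrable because $h(\bb w)\le c(\abs{w_0}^\gamma+\I{\{\abs{w_0}\le 1\}})$ by the Breiman-type bound, and $\EE\abs{Y_0}^\gamma<\infty$), and control the remainder. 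The Birkhoff limit of the main term is exactly $x^{-\alpha}\EE^{\bb Y}[h((\hat Y_{u})_u)\mid\hat\Ii]((y_v))$, which, since $h$ only involves the coordinates indexed by $B^{(m)}$ and since $\EE(\cdot\mid\bb Y=\hat{\bb Y})$ is the natural way to write $\EE$ over the independent copy $(\Theta_v^m)$, equals the claimed $\eta_{\bb R^m}((y_v))$. Use of independence of $(Y_v)$ from $(R_v)$ and $(\Theta^m_v)$ is what lets me first integrate out $R$ (or $\Theta^m$) conditionally on $\bb Y$.

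The main obstacle is handling the remainder term, i.e.\ upgrading the pointwise convergence $\abs{D_n}g_n(\bb w)\to x^{-\alpha}h(\bb w)$ to the statement that $\frac{1}{\abs{D_n}}\sum_{v\in D_n}\abs{\abs{D_n}g_n((y_{u+v})_u)-x^{-\alpha}h((y_{u+v})_u)}\to 0$ for almost every $(y_v)$. The natural route is a uniform (in $\bb w$ over compacts) bound $\abs{D_n}g_n(\bb w)\le C(\abs{w_0}^\gamma+\I{\{\abs{w_0}\le 1\}})$ valid for all large $n$ uniformly, via Lemma~\ref{lem:ratiobound} applied to the block $\bb R^m$ (or a direct Potter-bound argument), together with a truncation $\abs{w_0}\le K$: on $\{\abs{y_v}\le K\}$ one has uniform convergence on the compact window, so the averaged error over those $v$ vanishes by dominated convergence combined with Lemma~\ref{lem:ergodictheorem}; on $\{\abs{y_v}>K\}$ the averaged contribution is bounded by $C\,\frac{1}{\abs{D_n}}\sum_{v\in D_n}\abs{y_v}^\gamma\I{\{\abs{y_v}>K\}}\to C\,\EE^{\bb Y}[\abs{\hat Y_0}^\gamma\I{\{\abs{\hat Y_0}>K\}}\mid\hat\Ii]((y_v))$, which tends to $0$ as $K\to\infty$. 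Letting $K\to\infty$ after $n\to\infty$ closes the argument. This truncation-plus-ergodic-average scheme is precisely the mechanism already used in the proof of Lemma~\ref{lem:extremalapprox1}, so the bookkeeping is routine once the pointwise spectral limit $h$ and its domination are in hand.
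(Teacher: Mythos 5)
Your overall strategy is the same as the paper's: express each summand as a function of the window $(y_{u+v})_{u\in B^{(m)}}$, obtain a pointwise spectral limit for fixed deterministic weights (this is exactly Lemma~\ref{lem:fixedy}), and then average with the spatial ergodic theorem, using a truncation plus a Breiman/Potter bound to control the heavy part. However, there is a genuine gap at the step you yourself flag as "the main obstacle": you assert that on the truncated part "one has uniform convergence on the compact window," but pointwise convergence of $\abs{D_n}\,g_n(\bb w)$ to $x^{-\alpha}h(\bb w)$ does not by itself give uniform convergence on compacts, and no equicontinuity of the family $\bb w\mapsto \abs{D_n}\,g_n(\bb w)$ is available for free. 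The paper closes exactly this hole by (i) first splitting the summand as in \eqref{eq:divideintwo} into the difference $\PP(M_{yR}(\Avm\cup\{v\})>a_nx)-\PP(M_{yR}(\Avm)>a_nx)$, each term being coordinatewise monotone in the numerical size of the weights, and then (ii) discretizing $[-K,K)^{\abs{B^{(m)}}}$ into a finite grid $\mathcal A_p$ and sandwiching each term between its evaluations at the "outer" and "inner" corner points $\xu$ and $\xl$ of each cell; for each of the finitely many grid values the fixed-$\bb w$ limit of Lemma~\ref{lem:fixedy} and Lemma~\ref{lem:ergodictheorem} apply directly, and the two bounds merge as the mesh $K/p\to 0$ by continuity of the limit. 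Your $g_n$ is a difference of two monotone functions but is not itself monotone, so the sandwich cannot be applied to it directly; you need the decomposition first.

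A second, related defect: you truncate only the central coordinate $\abs{w_0}\le K$, which suffices to dominate $g_n(\bb w)\le\PP(w_0R_0>a_nx)$ but does not confine the window $(y_{u+v})_{u\in B^{(m)}}$ to a compact set, so even a correct local-uniform-convergence argument would not apply on the event $\{\abs{y_v}\le K\}$ alone. The paper truncates the whole window via $\norm{\bb y_v^m}<K$ and controls the complement by $\PP(M_{yR}(\Avm)>a_nx)\le\PP(\norm{\bb y_v^m}\,\norm{\bb R_v^m}>a_nx)$ together with regular variation of $\norm{\bb R_0^m}$ and $\EE\norm{\bb Y_0^m}^\gamma<\infty$. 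With the decomposition \eqref{eq:divideintwo}, the full-window truncation, and the grid sandwich in place, the rest of your outline (Breiman bound for the tail, ergodic averaging, $K\to\infty$ last) matches the paper and is correct.
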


\begin{proof}
We write
\begin{equation}\label{eq:divideintwo}
\begin{aligned}
	\MoveEqLeft
	\sum_{v\in D_n}\PP(M_{yR}(A^{(m)}_v)\leq a_nx<y_vR_v)\\&
	=\sum_{v\in D_n}\PP(M_{yR}(A^{(m)}_v\cup\{v\})> a_nx)-
	\sum_{v\in D_n}\PP(M_{yR}(A^{(m)}_v)> a_nx)
\end{aligned}
\end{equation}
and handle the two terms separately but in the same way. To keep notation simple, we focus on the second term. 
Let $\bb y_v^m = (y_z)_{z \in B_v^{(m)}}$ and $\bb R_v^m = (R_z)_{z \in B_v^{(m)}}$ for each $m\in\NN$ and $v\in\Zd$, where $B_v^{(m)}$ is defined in Section~\ref{sec:prelim}. For $K\in\NN$ we now have
\begin{equation}\label{eq:twoparts}
\begin{aligned}
	\MoveEqLeft
	\sum_{v\in D_n}\PP(M_{yR}(A^{(m)}_v)> a_nx)\\&
	=\sum_{v\in D_n}\I{\{\norm{\bb{y}^{m}_v}< K\}} \PP(M_{yR}(A^{(m)}_v)> a_nx) \\ & \quad
	+\sum_{v\in D_n}\I{\{\norm{\bb{y}^{m}_v}\geq K\}}\PP(M_{yR}(A^{(m)}_v)> a_nx).
\end{aligned}
\end{equation}
For the second term we find for $n$ large that
\begin{equation}\label{eq:aboveMpart}
\begin{aligned}
	\MoveEqLeft
	\sum_{v\in D_n}\I{\{\norm{\bb{y}^{m}_v}\geq K\}}
	\PP(M_{yR}(A^{(m)}_v)> a_nx)\\&
	\le \sum_{v\in D_n}\I{\{\norm{\bb{y}^{m}_v}\geq K\}}
	\PP(\norm{\bb{y}^{m}_v}\norm{\bb{R}^{m}_v} > a_nx)\\&
	\le \sum_{v\in D_n}\I{\{\norm{\bb{y}^{m}_v}\geq K\}}
	\PP(\norm{\bb{R}^{m}_0} > a_nx)c \norm{\bb{y}^{m}_v}^\gamma ,
\end{aligned}
\end{equation}
where we in the second inequality have used the stationarity of $(R_v)$ and the fact that the tail of $\norm{\bb{R}^{m}_0}$ is regularly varying: There exist $c>0$ and $x_0>0$ such that
\[
	\frac{\PP(\norm{\bb{R}^{m}_0}>x/y)}
	{\PP(\norm{\bb{R}^{m}_0}>x)}
	\leq c y^\gamma
\]
for all $x\geq x_0$ and $y\geq 1$. Utilizing \eqref{eq:tailnormR} and the choice of $(a_n)$, we find from Lemma~\ref{lem:ergodictheorem} that the upper bound in \eqref{eq:aboveMpart} has limit
\[	
	x^{-\alpha}C\frac{\EE^{\bb{Y}}
	\Bigl[\I{\{\norm{\hat{\bb{Y}}^{m}_0}\geq K\}}
	\norm{\hat{\bb{Y}}^{m}_0}^\gamma\mid \hat{\Ii}\Bigr]((y_v))}
	{\EE(\Theta^{m}_0)_+^\alpha} 
\]
as $n\to\infty$.
By dominated convergence, this tends to 0 as $K\to\infty$, since $\EE\norm{\bb{Y}^{m}_0}^\gamma<\infty$.

We now consider the first term in \eqref{eq:twoparts}. When $\norm{\bb y_v^m}<K$, the field $\bb y_v^m$ takes its values in the space $\mathcal{S}=[-K,K)^{\abs{B^{(m)}}}$. Now fix $p\in\NN$ and construct the set $\mathcal{A}_p$ of grid points of $\mathcal S$ with a distance $K/p$ apart, i.e.
\[
	\mathcal{A}_p = (\tfrac{K}{p} \ZZ)^{^{\abs{B^{(m)}}}} \cap \mathcal S .
\]
We only highlight the dependence on $p$, as $K$ will be fixed throughout the use of the set.
For each point $\bb x_p \in \mathcal A_p$, we let $\mathcal B_{\bb x_p}$ denote the box in $\R^{\abs{B^{(m)}}}$ with ``left'' corner point $\bb x_p$,
\[
	\mathcal B_{\bb x_p} 
	= \bb x_p +  \bigl[0,K/p\bigr)^{\abs{B^{(m)}}} .
\]
These boxes make up a partition of the set $\mathcal S$, and thus when $\norm{\bb y_v^m}<K$, the field $\bb y_v^m$ lies in exactly one such box. As a last piece of convenient notation, we let $\xu$ and $\xl$ denote the coordinate-wise numerically largest respectively smallest value in (the closure of) $\mathcal B_{\bb x_p}$. That is, with $x_{p,z}$ denoting the $z$'th coordinate of $\bb x_p$ and similarly for $\xu$ and $\xl$,
\begin{equation*}
	\overline{x}_{p,z} 
	= x_{p,z} + \tfrac{K}{p} \I{\{x_{p,z}\ge 0\}}
	\qquad\text{and}\qquad
	\underline{x}_{p,z} 
	= x_{p,z} + \tfrac{K}{p} \I{\{x_{p,z}< 0\}} .
\end{equation*} 

Having introduced the notation above, we can write the first term in \eqref{eq:twoparts} as
\begin{equation}\label{eq:sumivided}
	\sum_{\bb{x}_p\in\mathcal{A}_p}
	\sum_{v\in D_n}
	\I{\{\bb{y}^{m}_v\in \mathcal B_{\bb x_p}\}}
	\PP\bigl(M_{yR}(\Avm)> a_nx\bigr).
\end{equation}
This is bounded from above by 
\[
	\sum_{\bb{x}_p\in\mathcal{A}_p}
	\sum_{v\in D_n}
	\I{\{\bb{y}^{m}_v\in \mathcal B_{\bb x_p}\}}
	\PP\bigl(M_{\xu R}(\Avm)> a_nx\bigr),
\]
which by Lemmas~\ref{lem:ergodictheorem} and \ref{lem:fixedy} converges to
\begin{equation}\label{eq:upperboundlimit}
	x^{-\alpha}
	\sum_{\bb{x}_p\in\mathcal{A}_p}
	\frac{
	\EE^{\bb{Y}}\bigl(
	\I{\{\hat{\bb{Y}}_0^{m}\in \mathcal B_{\bb x_p}\}}\mid\hat{\Ii}\bigr)((y_v))
	\,\EE(\max_{v\in A_0^{(m)}}
	\overline x_{p,v} \Theta^{m}_v)_+^\alpha}{\EE(\Theta^{m}_0)_+^\alpha}
\end{equation}
as $n\to\infty$. Similarly, (\ref{eq:sumivided}) is bounded from below by an expression that tends to
\begin{equation}\label{eq:lowerboundlimit}
	x^{-\alpha}
	\sum_{\bb{x}_p\in\mathcal{A}_p}
	\frac{
	\EE^{\bb{Y}}\bigl(
	\I{\{\hat{\bb{Y}}_0^{m}\in \mathcal B_{\bb x_p}\}}\mid\hat{\Ii}\bigr)((y_v))
	\,\EE(\max_{v\in A_0^{(m)}}
	\underline x_{p,v} \Theta^{m}_v)_+^\alpha}{\EE(\Theta^{m}_0)_+^\alpha}
\end{equation}
as $n\to\infty$. Clearly, both (\ref{eq:upperboundlimit}) and (\ref{eq:lowerboundlimit}) tends to
\[
x^{-\alpha}\frac{\EE^{\bb{Y}}\Big[\EE\Big(\I{\norm{\bb{Y}^{m}_0}<M}(\max_{v\in A_0^{(m)}}Y_v\Theta^{m}_v)_+^\alpha\mid \bb{Y}=\hat{\bb{Y}}\Big)\mid \hat{\Ii}\Big]((y_v))}{\EE(\Theta^{m}_0)_+^\alpha}
\]
as $p\to\infty$. Letting $K\to\infty$, using that $\EE\norm{\bb{Y}^{m}_0}^\alpha<\infty$, gives that
\begin{align*}
	\MoveEqLeft	
	\sum_{v\in D_n}\PP(M_{yR}(A^{(m)}_v)> a_nx) \\ &
	\to x^{-\alpha}\frac{\EE^{\bb{Y}}
	\Bigl[\EE\Bigl((\max_{v\in A^{(m)}_0}Y_v\Theta^{m}_v)_+^\alpha\mid 
	\bb{Y}=\hat{\bb{Y}}\Bigr)\mid \hat{\Ii}\Bigr]
((y_v))}
{\EE(\Theta^{m}_0)_+^\alpha}.
\end{align*}
Handling the first term in (\ref{eq:divideintwo}) similarly, gives the desired result.
\end{proof}

\section{Extremal representation}
\label{app:Geometry}

Throughout this appendix we assume that $(D_n)$ satisfies Assumption~\ref{ass:geometry} and that $(Z_v)$, with approximating fields $(Z_v^t)$, and $(Y_v)$ satisfy Assumption~\ref{ass:randomfield}. We set out to prove (in Lemma~\ref{lem:JsetandAset} below) that
\begin{equation*}
	\PP\bigl(\max_{v \in D_n} y_v Z_v \le a_n x \bigr)  
	=
	\exp\Bigl(-\sum_{v\in D_n} \PP \bigl( M_{yZ}(\Av) \le a_n x < y_v Z_v \bigr) \Bigr) + o(1)
\end{equation*}
for almost all realizations $(y_v)$ of $(Y_v)$, which is also satisfied for the approximating fields $(Z_v^t)$ for each $t\in\NN$.
The proof follows by arguments similar to those of \cite{RNielsenStehr2022}, however with some modifications due to lack of stationarity caused by conditioning on $(Y_v)=(y_v)$. This is handled using the ergodic-like convergence of Lemma~\ref{lem:ergodictheorem} in Appendix~\ref{app:ergodictheorem}.

The results herein will, again, be formulated and proven in terms of a general field $(R_v)$ acting as a place-holder for either $(Z_v)$ or $(Z_v^t)$. Therefore, by assumption and by Lemma~\ref{lem:ratiobound}, we have that there exist $n_0\in\NN$ and $C_0>0$ such that
\begin{equation}\label{eq:ratioboundapp}
	\PP(y R_0> a_n x) \le \frac{C_0\, x^{-\alpha}}{\abs{D_n}} \abs{y}^\gamma
\end{equation}
for all $n\ge n_0$ and all $\abs{y}>1$.

To ease notation throughout, we let $P_n$ and $Q_n$ be the set of indices used in the geometrical approximation $\dnm \subseteq D_n \subseteq \dnp$ of $D_n$ appearing in Assumption~\ref{ass:geometry}, i.e.
\[
	P_n = \bigl\{
	z\in\Zd \::\: \Jz \subseteq D_n
	\bigr\}
\]
and
\[
	Q_n = \bigl\{
	z\in\Zd \::\: \Jz \cap D_n \neq \emptyset
	\bigr\}
\]
for all $n\in\NN$. Hence, $\bigcup_{P_n} \Jz = \dnm$ and $\bigcup_{Q_n} \Jz = \dnp$, and note that the assumption \eqref{eq:geometricassumption} is equivalent to the asymptotic equivalences
\[
	\abs{P_n} \sim \abs{Q_n} \sim \frac{\abs{D_n}}{\abs{J_z^n}}
\]
as $n\to\infty$.

The following lemma, which states that the extremal behavior of $(y_v R_v)$ on a collection of $\Jz$-sets is essentially equal to the extremal behavior on corresponding subsets of asymptotically equivalent sizes, will be useful throughout the section.

\begin{lemma}\label{lem:tailapprox}
Let $(R_v)$ be either $(Z_v)$ or $(Z_v^t)$ for some fixed $t\in\NN$. For $z\in\Zd$ let $H_z^n \subseteq \Jz$ be a set which expands in size as $\Jz$, i.e. $\abs{H_z^n} \sim \abs{\Jz}$ as $n\to\infty$. Then, for almost all realizations $(y_v)$ of $(Y_v)$,
\begin{equation}\label{eq:subsetofJz}
	\sum_{z \in \Pn} \PP \bigl(
	M_{yR}(\Jz \setminus H_z^n) > a_n x \bigr)	
	\to 0
\end{equation}
as $n\to\infty$. The result is also true if $P_n$ is replaced by $Q_n$. Moreover, 
\begin{equation}\label{eq:sumapprox1}
	\sum_{v \in \dnp} \PP \bigl(
	y_vR_v > a_n x \bigr)-	
	\sum_{v \in \dnm} \PP \bigl(
	y_vR_v > a_n x \bigr)	
	\to 0
\end{equation}
and
\begin{equation}\label{eq:sumapprox2}
	\sum_{z \in \Qn} \PP \bigl(
	M_{yR}(\Jz) > a_n x \bigr)	-
	\sum_{z \in \Pn} \PP \bigl(
	M_{yR}(\Jz) > a_n x \bigr)	
	\to 0
\end{equation}
as $n\to\infty$.
\end{lemma}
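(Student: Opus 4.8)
The plan is to reduce all three assertions \eqref{eq:subsetofJz}--\eqref{eq:sumapprox2} to the single statement that, for almost all realizations $(y_v)$ of $(Y_v)$,
\[
	\frac{1}{\abs{D_n}}\sum_{v\in G_n}\bigl(\abs{y_v}^\gamma+1\bigr)\to 0
\]
whenever $(G_n)$ is a sequence of subsets of $\Zd$ with $\abs{G_n}=o(\abs{D_n})$ that is contained in a fixed increasing sequence of boxes $(\hat D_n)$ filling $\Zd$ with $\abs{\hat D_n}=O(\abs{D_n})$.

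First I would record a uniform tail bound: combining Lemma~\ref{lem:ratiobound} (the inequalities \eqref{eq:ratiobound} and \eqref{eq:ratioboundwitht}) with \eqref{eq:tailbalance}, \eqref{eq:approx1} and the normalisation $\abs{D_n}\PP(R_0>a_n)\to1$, there are $C>0$ (depending on the fixed $t$ when $R=Z^t$) and $n_0$ such that $\PP(yR_0>a_nx)\le Cx^{-\alpha}\abs{D_n}^{-1}\bigl(\abs{y}^\gamma+\I{\{\abs y\le1\}}\bigr)$ for all $n\ge n_0$ and $y\in\R$; for $\abs y\le1$ this merely uses $\PP(yR_0>a_nx)\le\PP(R_0>a_nx)\vee\PP(-R_0>a_nx)$ together with the tail balance. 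Stationarity of $(R_v)$ and a union bound then give $\PP\bigl(M_{yR}(A)>a_nx\bigr)\le Cx^{-\alpha}\abs{D_n}^{-1}\sum_{v\in A}\bigl(\abs{y_v}^\gamma+1\bigr)$ for any finite $A\subseteq\Zd$.

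Applying this repeatedly identifies the relevant $G_n$ in each case. For \eqref{eq:subsetofJz} with $P_n$, the sum is bounded by $Cx^{-\alpha}\abs{D_n}^{-1}\sum_{v\in G_n}(\abs{y_v}^\gamma+1)$ with $G_n=\bigcup_{z\in P_n}(\Jz\setminus\Hz)\subseteq\dnm$; since the $\Jz$ are disjoint and $\abs{\Jz\setminus\Hz}=o(\abs{\Jz})$ uniformly in $z$, we get $\abs{G_n}=o(\abs{D_n})$, and the same with $Q_n$ in place of $P_n$. For \eqref{eq:sumapprox1} the difference equals $\sum_{v\in\dnp\setminus\dnm}\PP(y_vR_v>a_nx)$, which is of the above form with $G_n=\dnp\setminus\dnm$; here $\abs{G_n}=\abs{\dnp}-\abs{\dnm}=o(\abs{D_n})$ is exactly \eqref{eq:geometricassumption}. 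For \eqref{eq:sumapprox2} the difference equals $\sum_{z\in Q_n\setminus P_n}\PP(M_{yR}(\Jz)>a_nx)$, dominated by the same quantity with $G_n=\bigcup_{z\in Q_n\setminus P_n}\Jz=\dnp\setminus\dnm$. In all cases $G_n\subseteq\dnp$; and because \eqref{eq:geometricassumption} forces $\abs{\dnm}\sim\abs{D_n}$ (so $\dnm\neq\emptyset$ eventually, hence each approximating box fits inside the enclosing box $D_n'$ of $D_n$ and $\bb t_n$ stays below the side-lengths of $D_n'$), one verifies that $\dnp$, and with it $G_n$, lies inside an increasing box sequence $(\hat D_n)$ with $\bigcup_n\hat D_n=\Zd$ and $\abs{\hat D_n}=O(\abs{D_n})$.

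It remains to prove the reduced estimate, and this is the only real work. Fix $M>0$ and write $\abs{y_v}^\gamma+1=\bigl[(\abs{y_v}^\gamma+1)\wedge M\bigr]+\bigl[\abs{y_v}^\gamma+1-M\bigr]_+$. The bounded part contributes at most $M\abs{G_n}/\abs{D_n}\to0$. For the remainder I would enlarge $G_n$ to $\hat D_n$ and invoke the spatial Birkhoff--Khinchin theorem, Lemma~\ref{lem:ergodictheorem}, on the box sequence $(\hat D_n)$ with the integrable (by \eqref{eq:gammaY}) function $h\bigl((x_u)_u\bigr)=\bigl[\abs{x_0}^\gamma+1-M\bigr]_+$, which yields $\limsup_n\frac{1}{\abs{D_n}}\sum_{v\in G_n}(\abs{y_v}^\gamma+1)\le C^*\,\EE^{\bb Y}\bigl(\bigl[\abs{\hat Y_0}^\gamma+1-M\bigr]_+\mid\hat{\Ii}\bigr)\bigl((y_v)\bigr)$ for almost all $(y_v)$, where $C^*=\limsup_n\abs{\hat D_n}/\abs{D_n}<\infty$. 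Letting $M\to\infty$, the right-hand side tends to $0$ a.e.\ by conditional dominated convergence, since $\EE^{\bb Y}(\abs{\hat Y_0}^\gamma\mid\hat{\Ii})$ is a.e.\ finite. The main obstacle is precisely this point: the sets $G_n$ (thin boundary shells of the $\Jz$, or $\dnp\setminus\dnm$) are too irregular to feed into Lemma~\ref{lem:ergodictheorem} directly, so the truncation trades their $o(\abs{D_n})$ size against the full ergodic average over an enclosing \emph{regular} box sequence, and the finiteness of $\EE\abs{Y_0}^\gamma$ is what kills the untruncated tail.
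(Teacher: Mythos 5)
Your proof is correct and follows essentially the same route as the paper's: bound everything by sums of marginal probabilities via stationarity and the Karamata-type bound of Lemma~\ref{lem:ratiobound}, then kill the contribution over the $o(\abs{D_n})$-sized exceptional sets by a truncation argument combined with Lemma~\ref{lem:ergodictheorem} applied to an enlarging regular index set, letting the truncation level tend to infinity using $\EE\abs{Y_0}^\gamma<\infty$. The only (immaterial) differences are that you truncate the function $\abs{y}^\gamma+1$ at level $M$ where the paper splits on the event $\{\abs{y_v}>b\}$, and that you unify the three claims into one reduced estimate where the paper treats \eqref{eq:subsetofJz} in detail and notes the other two follow similarly.
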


\begin{proof}
It is enough to show \eqref{eq:subsetofJz} with summation over $P_n$, as the case of $Q_n$ follows identically.

Let $b\ge 1$ be given. Then
\begin{align*}
	\MoveEqLeft \sum_{z \in \Pn} \PP \bigl(
	M_{y R}(\Jz \setminus H_z^n) > a_n x \bigr) \\ &
	\le \sum_{z \in \Pn} \sum_{v\in \Jz\setminus H_z^n} 
	\PP \bigl(
	y_v R_v > a_n x \bigr) \\ &
	= \sum_{z \in \Pn} \sum_{v\in \Jz\setminus H_z^n} 
	\PP \bigl(
	y_v R_v > a_n x \bigr) \I{\{\abs{y_v}\le b\}} +
	\sum_{z \in \Pn} \sum_{v\in \Jz\setminus H_z^n} 
	\PP \bigl(
	y_v R_v > a_n x \bigr) \I{\{\abs{y_v}> b\}} .
\end{align*} 
We consider the two terms separately. For the former term we see by stationarity of $(R_v)$ that
\begin{align*}
	\MoveEqLeft 
	\sum_{z \in \Pn} \sum_{v\in \Jz\setminus H_z^n} 
	\PP \bigl(
	y_v R_v > a_n x \bigr) \I{\{\abs{y_v}\le b\}} \\ &
	\le \sum_{z \in \Pn} \sum_{v\in \Jz\setminus H_z^n} 
	\PP \bigl(
	\abs{R_v} > a_n x/b \bigr) \I{\{\abs{y_v}\le b\}} \\ &
	\le \abs{P_n}\, \abs{\Jz\setminus H_z^n} \,
	\PP \bigl(
	\abs{R_0} > a_n x/b \bigr) \\ &
	\to 0
\end{align*}
as $n\to\infty$ due to the choice of $a_n$ and the tail-balance of $(R_v)$. For the second term above we use \eqref{eq:ratioboundapp} and Lemma~\ref{lem:ergodictheorem} of Appendix~\ref{app:ergodictheorem} to obtain (for sufficiently large $n$)
\begin{align*}
	\MoveEqLeft	
	\sum_{z \in \Pn} \sum_{v\in \Jz\setminus H_z^n} 
	\PP \bigl(
	y_v R_v > a_n x \bigr) \I{\{\abs{y_v}> b\}} \\ &
	\le 
	\sum_{z \in \Pn} \sum_{v\in \Jz} 
	\PP \bigl(
	y_v R_v > a_n x \bigr) \I{\{\abs{y}> b\}} \\ &
	= \sum_{v\in \dnm} 
	\PP \bigl(
	y_v R_v > a_n x \bigr) \I{\{\abs{y}> b\}} \\ &
	\le \frac{C_0\, x^{-\alpha}}{\abs{\dnm}}
	\sum_{v\in \dnm} \abs{y_v}^\gamma \I{\{\abs{y_v}> b\}} \\ &
	\to C_0\, x^{-\alpha} \EE^{\bb{Y}}[\abs{\hat{Y}_0}^\gamma \I{\{\abs{\hat{Y}_0}>b\}}\mid \hat{\Ii}]((y_v))
\end{align*}
as $n\to\infty$. Letting $b\to\infty$ proves \eqref{eq:subsetofJz}.

The claims \eqref{eq:sumapprox1} and \eqref{eq:sumapprox2} follow by similar arguments utilizing that $P_n \subseteq Q_n$, $\dnm \subseteq \dnp$ and $\abs{\dnp \setminus \dnm} = o(\abs{D_n})$.
\end{proof}
In the proof of Lemma~\ref{lem:maxvsJkasse} below, we will use an asymptotic equivalence between a product of probabilities, where each factor is $y_v$-dependent, and the exponential of the sum of the opposite probabilities. In the process of obtaining this, the next result will be useful. The asymptotic equivalence can be seen as the the $y_v$-dependent counterpart of the formula $a_n^n = \exp(-n(1-a_n)) + o(1)$ used in classical extremal theory, cf. \cite[Formula~(2.8)]{Obrien1987}.

\begin{lemma}\label{lem:expappr0}
Let $(R_v)$ be either $(Z_v)$ or $(Z_v^t)$ for some fixed $t\in\NN$. For almost all realizations $(y_v)$ of $(Y_v)$,
\[
	\sum_{z\in\Pn} \sum_{j=2}^\infty \frac{1}{j} 
	\Bigl(
	\sum_{v\in\Jz} \PP(y_v R_v >a_n x)
	\Bigr)^j
	\to 0
\]
as $n\to\infty$. The result is also true if $P_n$ is replaced by $Q_n$.
\end{lemma}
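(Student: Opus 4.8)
Throughout set $p_z^n=\sum_{v\in\Jz}\PP(y_vR_v>a_nx)$ for $z\in\Zd$, where $R$ stands for $Z$ or a fixed $Z^t$; the lemma asserts $\sum_{z\in\Pn}\sum_{j\ge2}\tfrac1j(p_z^n)^j\to0$, and the same with $\Pn$ replaced by $\Qn$. The plan is to reduce this to two facts holding for almost all realizations $(y_v)$: (i) $\sup_{z}p_z^n\to0$, and (ii) $\sum_z p_z^n$ stays bounded in $n$, where $z$ ranges over $\Pn$ (resp.\ $\Qn$). Indeed, once $\sup_z p_z^n\le\tfrac12$ we may use the elementary bounds $\sum_{j\ge2}\tfrac1j t^j\le t^2$ for $t\in[0,\tfrac12]$ and $\sum_z(p_z^n)^2\le(\sup_w p_w^n)\sum_z p_z^n$, so that the sum in question is at most $(\sup_z p_z^n)\sum_z p_z^n\to0$.

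The first ingredient is the estimate $\PP(y_vR_v>a_nx)\le C\abs{D_n}^{-1}(\abs{y_v}^\gamma+1)$, valid for $n\ge n_0$ and all $v$: for $\abs{y_v}>1$ this is \eqref{eq:ratioboundapp}, and for $\abs{y_v}\le1$ it follows from $\PP(y_vR_v>a_nx)\le\PP(\abs{R_0}>a_nx)$ together with the fact that $\abs{D_n}\PP(\abs{R_0}>a_nx)$ is bounded (by \eqref{eq:tailbalance}, and \eqref{eq:approx1} when $R=Z^t$). For (ii) with $z\in\Pn$, use that $\dnm=\bigcup_{z\in\Pn}\Jz$ is a disjoint union contained in $D_n$, so that $\sum_{z\in\Pn}p_z^n=\sum_{v\in\dnm}\PP(y_vR_v>a_nx)\le C\,\abs{D_n}^{-1}\sum_{v\in D_n}(\abs{y_v}^\gamma+1)$; Lemma~\ref{lem:ergodictheorem}, applied to $h((y_u))=\abs{y_0}^\gamma+1$ (integrable since $\EE\abs{Y_0}^\gamma<\infty$), shows the right-hand side converges, hence is bounded, for almost all $(y_v)$. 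For $\Qn$ write $\sum_{z\in\Qn}p_z^n=\sum_{z\in\Pn}p_z^n+\sum_{v\in\dnp\setminus\dnm}\PP(y_vR_v>a_nx)$; the last sum tends to $0$ by \eqref{eq:sumapprox1} of Lemma~\ref{lem:tailapprox}, so $\sum_{z\in\Qn}p_z^n$ is bounded as well, and moreover $\sum_{z\in\Qn\setminus\Pn}p_z^n\to0$, whence $\sup_{z\in\Qn\setminus\Pn}p_z^n\to0$.

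The step that requires the most care is the uniform bound (i) over $z\in\Pn$, which I will obtain by truncation. Fix $K\in\NN$ with $K\ge1$. Each box $\Jz$ contains exactly $\bb t_n^*$ lattice points (since $\bb t_n\in\NN^d$), and for $z\in\Pn$ it lies inside $D_n$; splitting $\abs{y_v}^\gamma$ according to whether $\abs{y_v}\le K$ or not gives, uniformly in $z\in\Pn$,
\[
p_z^n\le \frac{C'}{\abs{D_n}}\Bigl[(K^\gamma+1)\,\bb t_n^*+\sum_{v\in D_n}\abs{y_v}^\gamma\I{\{\abs{y_v}>K\}}\Bigr]
\]
for a constant $C'$. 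The first summand tends to $0$ as $n\to\infty$ because $\bb t_n^*=o(\abs{D_n})$ by Assumption~\ref{ass:geometry}, while by Lemma~\ref{lem:ergodictheorem} (now with $h((y_u))=\abs{y_0}^\gamma\I{\{\abs{y_0}>K\}}$) the second summand converges, for almost all $(y_v)$, to a constant multiple of $\EE^{\bb{Y}}[\abs{\hat{Y}_0}^\gamma\I{\{\abs{\hat{Y}_0}>K\}}\mid\hat{\Ii}]((y_v))$. Hence $\limsup_{n\to\infty}\sup_{z\in\Pn}p_z^n\le C'\,\EE^{\bb{Y}}[\abs{\hat{Y}_0}^\gamma\I{\{\abs{\hat{Y}_0}>K\}}\mid\hat{\Ii}]((y_v))$ almost surely, for every $K\in\NN$; letting $K\to\infty$ and invoking conditional dominated convergence (the integrand decreases to $0$ and is dominated by $\abs{\hat{Y}_0}^\gamma\in L^1(\bb{Y}(\PP))$) yields $\sup_{z\in\Pn}p_z^n\to0$ for almost all $(y_v)$. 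Combined with the previous paragraph this also gives $\sup_{z\in\Qn}p_z^n\to0$.

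Putting the pieces together: on the full-measure set of realizations $(y_v)$ on which Lemmas~\ref{lem:ergodictheorem} and \ref{lem:tailapprox}, together with the conditional dominated convergence along $K\in\NN$, all hold, we have for $R_n\in\{\Pn,\Qn\}$ that $\sup_{z\in R_n}p_z^n\le\tfrac12$ for all large $n$ and $\sum_{z\in R_n}p_z^n$ bounded in $n$, so that
\[
\sum_{z\in R_n}\sum_{j=2}^\infty\frac1j(p_z^n)^j\le\sum_{z\in R_n}(p_z^n)^2\le\Bigl(\sup_{z\in R_n}p_z^n\Bigr)\sum_{z\in R_n}p_z^n\longrightarrow0,
\]
which is the assertion.
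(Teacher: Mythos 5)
Your proof is correct, and it takes a genuinely different route from the paper's. You first establish two structural facts --- $\sup_{z}\sum_{v\in\Jz}\PP(y_vR_v>a_nx)\to 0$ (via truncation of $y_v$ at level $K$, the exact count $\abs{\Jz}=\bb t_n^*=o(\abs{D_n})$, and Lemma~\ref{lem:ergodictheorem}) and boundedness of $\sum_{z}\sum_{v\in\Jz}\PP(y_vR_v>a_nx)$ --- and then finish with the elementary inequality $\sum_{j\ge 2}j^{-1}t^j\le t^2$ for $t\le 1/2$. The paper instead never proves a uniform sup bound: it splits each inner sum by the size of $y_v$ at a level $b$, applies $(a_1+a_2)^j\le 2^j(a_1^j+a_2^j)$, and controls the two resulting double series separately, the first using $\abs{P_n}/\abs{P_n}^j\to0$ with dominated convergence in $j$, the second using $\sum_i a_i^j\le(\sum_i a_i)^j$ together with a choice of $b$ (depending on the realization $(y_v)$) making the limiting sum strictly less than $1$ so the logarithmic series converges, before letting $b\to\infty$. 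Both arguments rest on the same ingredients (\eqref{eq:ratioboundapp}, the ergodic Lemma~\ref{lem:ergodictheorem}, $\bb t_n^*=o(\abs{D_n})$, and \eqref{eq:sumapprox1} for the $Q_n$ case); yours is arguably more modular and avoids the realization-dependent choice of $b$, at the price of the extra (but clean) step of proving the uniform smallness of the box sums.
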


\begin{proof}
Let $b\ge 1$ large enough be given. Later in the proof we will explicitly choose $b$ such that
\[
	2 C_0\, x^{-\alpha} \EE^{\bb{Y}}[\abs{\hat{Y}_0}^\gamma \I{\{\abs{\hat{Y}_0}>b\}}\mid \hat{\Ii}]((y_v)) < 1,
\]
where $C_0$ is the constant from \eqref{eq:ratioboundapp}. This $b$ can be chosen for almost all $(y_v)$ due to the integrability of $\abs{Y_0}^\gamma$. 

By considering the cases $\abs{y_v}\le b$ and $\abs{y_v}>b$ separately as in the previous proof, we can write
\begin{align*}
	\sum_{v\in\Jz} \PP\bigl(y_v R_v >a_n x\bigr)
	\le \abs{\Jz}\, \PP\bigl(\abs{R_0}> a_n x/b \bigr)
	+ \sum_{v\in\Jz} 
	\PP\bigl( y_v R_0>a_n x \bigr) \I{\{\abs{y_v} > b\}} .
\end{align*}
Since $(a_1+a_2)^j \le 2^j (a_1^j + a_2^j)$ for positive $a_1,a_2$, The result especially follows if the following can be proven true for the considered realization $(y_v)$:
\begin{align}
	\label{eq:udvidetergode1}
	&
	\limsup_{n\to\infty}\sum_{j=2}^\infty \frac{1}{j} \abs{P_n} 
	\Bigl( 2\, \abs{\Jz}\, \PP(\abs{R_0}> a_n x/b)\Bigr)^j = 0 , \\ 
	\label{eq:udvidetergode2}
	&
	\lim_{b\to\infty}\limsup_{n\to\infty}
	\sum_{j=2}^\infty \frac{1}{j} \sum_{z \in \Pn} 
	\Bigl( 2\sum_{v\in\Jz} \PP\bigl( y_v R_0>a_n x \bigr) 
	\I{\{\abs{y_v} > b\}}
	\Bigr)^j = 0 .
\end{align}

We start by showing \eqref{eq:udvidetergode1}. Due to the choice of $a_n$ and the tail-balance of $R_0$ we have $\limsup_n \abs{D_n} \PP(\abs{R_0} > a_n x/b)<\infty$ for all $x>0$. In particular we find for any $j\ge 2$ that
\[
	\abs{P_n} 
	\Bigl( 2\, \abs{\Jz}\, \PP(\abs{R_0}> a_n x/b)\Bigr)^j
	\sim 
	\Bigl( 2\, \abs{D_n}\, \PP(\abs{R_0}> a_n x/b)\Bigr)^j 
	\frac{\abs{P_n}}{\abs{P_n}^j} 
	\to 0
\]
as $n\to\infty$. Moreover, for $n$ sufficiently large,
\begin{align*}
	\abs{P_n}^{1/j}\, 2\, \abs{\Jz}\, \PP(\abs{R_0}> a_n x/b)  &
	\le \abs{P_n}^{1/2}\, 2\, \abs{\Jz}\, \PP(\abs{R_0}> a_n x/b) \\ &
	\sim \frac{2\, \abs{D_n}\,\PP(\abs{R_0}> a_n x/b)}{\sqrt{\abs{P_n}}}
	< 1
\end{align*}
for all $j\ge 2$. Since $\sum_{j=2}^\infty \tfrac1j a^j<\infty$ exactly when $\abs{a}<1$, we conclude \eqref{eq:udvidetergode1} by dominated convergence.

To show \eqref{eq:udvidetergode2} we use the inequality $\sum_i a_i^j \le (\sum_i a_i)^j$ to obtain
\begin{align*}
	\sum_{z\in\Pn} 
	\Bigl( 2\sum_{v\in\Jz} \PP\bigl( y_v R_0>a_n x \bigr) 
	\I{\{\abs{y_v} > b\}}
	\Bigr)^j &
	\le 
	\Bigl( 2\sum_{z\in\Pn} \sum_{v\in\Jz} \PP\bigl( y_v R_0>a_n x \bigr) 
	\I{\{\abs{y_v} > b\}}\Bigr)^j \\ &
	= \Bigl( 2 \sum_{v \in\dnm} \PP\bigl( y_v R_0>a_n x \bigr) 
	\I{\{\abs{y_v} > b\}}
	\Bigr)^j .
\end{align*}
Exactly as in the previous proof we find that
\[
	\limsup_{n\to\infty} 2 \sum_{v \in\dnm} \PP\bigl( y_v R_0>a_n x \bigr) \I{\{\abs{y_v} > b\}}
	\le 
	2\, C_0\, x^{-\alpha} \EE^{\bb{Y}}[\abs{\hat{Y}_0}^\gamma \I{\{\abs{\hat{Y}_0}>b\}}\mid \hat{\Ii}]((y_v)) .
\]
Since $b$ is chosen such that this limit is $<1$, we find by dominated convergence that
\begin{align*}
	\MoveEqLeft
	\limsup_{n\to\infty}
	\sum_{j=2}^\infty \frac1j
	\sum_{z\in\Pn} 
	\Bigl( 2\sum_{v\in\Jz} \PP\bigl( y_v R_0>a_n x \bigr) 
	\I{\{\abs{y_v} > b\}}
	\Bigr)^j \\ &
	\le 
	\limsup_{n\to\infty}
	\sum_{j=2}^\infty \frac{1}{j}  
	\Bigl( 2\sum_{z\in\Pn} \sum_{v\in\Jz} \PP\bigl( y_v R_0>a_n x \bigr) 
	\I{\{\abs{y_v} > b\}}\Bigr)^j \\ &
	\le \sum_{j=2}^\infty \frac{1}{j}  
	\Bigl( 2\,C_0\, x^{-\alpha} \EE^{\bb{Y}}[\abs{\hat{Y}_0}^\gamma \I{\{\abs{\hat{Y}_0}>b\}}\mid \hat{\Ii}]((y_v))
	\Bigr)^j.
\end{align*}
Letting $b\to\infty$ the convergence \eqref{eq:udvidetergode2} follows.
\end{proof}
The next lemma, which will be used in the two subsequent results, provides the necessary approximate mixing of $(R_v)$. In its formulation, $\overline B_z^n$ denotes the union of the $3^d-1$ neighboring $J^n$-boxes of $\Jz$, i.e.
\begin{equation}\label{eq:overlineBz}
	\overline B_z^n = 
	\{u \in \Zd \setminus \Jz\::\:
	(z_\ell - 1) t_{n,\ell} 
	\le u_\ell
	\le (z_\ell + 2) t_{n,\ell} - 1,\ \text{for all } \ell
	\} .
\end{equation}

\begin{lemma}\label{lem:approxmixing}
Let $(R_v)$ be either $(Z_v)$ or $(Z_v^t)$ for some fixed $t\in\NN$. There exists $\bb \gamma_n \to \infty$ with $\bb \gamma_n = o(\bb t_n)$ such that
\begin{equation}\label{eq:approxmixing1}
	\abs[\Big]{
	\PP \Bigl(
		\bigcap_{z\in P_n} \bigl\{
		M_{yR}(\Hz) \le a_n x
		\bigr\}
	\Bigr) - 
	\prod_{z\in P_n} 
	\PP \bigl( M_{yR}(\Hz) \le a_n x \bigr)}
	\to 0
\end{equation}
as $n\to\infty$
for all $\bb \gamma_n$-separated sets $\Hz \subseteq \Jz$.
Moreover, for all $\bb \gamma_n$-separated sets $H_z^{n}\subseteq \Jz$ and $B_z^n \subseteq \overline B_z^n$,
\begin{equation}\label{eq:approxmixing2}
\begin{aligned}	
	\abs[\Big]{ &
	\sum_{z\in P_n} \PP \bigl(
		M_{yR}(\Hz) > a_n x, M_{yR}(B_z^n) > a_n x 
	\bigr) \\ & \quad
	- 
	\sum_{z\in P_n} 
	\PP \bigl( M_{yR}(\Hz) > a_n x \bigr)
	\PP \bigl( M_{yR}(B_z^n) > a_n x \bigr)}
	\to 0
\end{aligned}
\end{equation}
as $n\to\infty$. Both results remain true if $P_n$ is replaced by $Q_n$.
\end{lemma}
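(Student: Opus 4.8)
The plan is to read off both \eqref{eq:approxmixing1} and \eqref{eq:approxmixing2} from the strong mixing of $(R_v)$ by a single telescoping argument, controlling the number of telescoping steps through the asymptotics $\abs{P_n}\sim\abs{Q_n}\sim\abs{D_n}/\bb t_n^*$ (recall that each box $\Jz$ consists of exactly $\bb t_n^*$ lattice points).

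First I would fix the separating sequence. For $(R_v)=(Z_v^t)$ this is the sequence $\bb\gamma_n^t$ supplied by Assumption~\ref{ass:randomfield}\ref{eq:fieldass2}, which has $\bb\gamma_n^t\to\infty$, $\bb\gamma_n^t=o(\bb t_n)$ and, writing $\alpha^R$ for the mixing parameters of $(R_v)$, satisfies $\alpha^R(\bb\gamma_n)\,\abs{D_n}/\bb t_n^*\to 0$; since $\abs{P_n},\abs{Q_n}\sim\abs{D_n}/\bb t_n^*$ this yields $\abs{P_n}\,\alpha^R(\bb\gamma_n)\to 0$ and $\abs{Q_n}\,\alpha^R(\bb\gamma_n)\to 0$, which is the single quantity every error term below will collapse to. (When the lemma is used with $(R_v)=(Z_v)$ one takes the analogous sequence coming from the mixing of $(Z_v)$.) It suffices to prove the $P_n$-versions, the $Q_n$-versions being verbatim the same.

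For \eqref{eq:approxmixing1} I would enumerate $P_n=\{z_1,\dots,z_N\}$ arbitrarily, set $E_i=\{M_{yR}(H_{z_i}^n)\le a_n x\}$, and telescope $\PP(\bigcap_{i\le N}E_i)-\prod_{i\le N}\PP(E_i)$ into at most $N$ differences of the form $\PP(\bigcap_{i\le k+1}E_i)-\PP(\bigcap_{i\le k}E_i)\,\PP(E_{k+1})$, each multiplied by a product of probabilities bounded by $1$. The event $\bigcap_{i\le k}E_i=\{M_{yR}(\bigcup_{i\le k}H_{z_i}^n)\le a_n x\}$ is $\sigma_R(\bigcup_{i\le k}H_{z_i}^n)$-measurable and $E_{k+1}$ is $\sigma_R(H_{z_{k+1}}^n)$-measurable; since the $H_z^n$ are $\bb\gamma_n$-separated and $\bb\gamma_n$-separation of a single set from each member of a family passes to the union, the index set $\bigcup_{i\le k}H_{z_i}^n$ is $\bb\gamma_n$-separated from $H_{z_{k+1}}^n$, so the strong mixing inequality bounds each such difference by $\alpha^R(\bb\gamma_n)$. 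Summing, the left side of \eqref{eq:approxmixing1} is at most $\abs{P_n}\,\alpha^R(\bb\gamma_n)\to 0$. For \eqref{eq:approxmixing2} the same mechanism is applied termwise: for fixed $z$ the events $\{M_{yR}(H_z^n)>a_n x\}$ and $\{M_{yR}(B_z^n)>a_n x\}$ are generated by the $\bb\gamma_n$-separated sets $H_z^n$ and $B_z^n$, so strong mixing bounds the $z$-th summand on the left of \eqref{eq:approxmixing2} by $\alpha^R(\bb\gamma_n)$, and summation over the $\abs{P_n}$ indices once more gives $\abs{P_n}\,\alpha^R(\bb\gamma_n)\to 0$.

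I expect the only delicate point to be the geometric bookkeeping underlying the telescoping, namely checking that the union of the already-processed cores $H_{z_1}^n,\dots,H_{z_k}^n$ is still $\bb\gamma_n$-separated from the next core $H_{z_{k+1}}^n$. This rests on the fact that $\bb\gamma_n$-separation is a coordinatewise condition and two distinct cores differ in at least one coordinate --- where they are kept apart by the collar of width $\asymp\bb\gamma_n$ that has been removed from $\Jz$ --- so that the pairwise separation of the family $(H_z^n)_z$ upgrades to separation of $\bigcup_{i\le k}H_{z_i}^n$ from $H_{z_{k+1}}^n$, making each telescoping step a bona fide application of the strong mixing inequality; after that, matching the $O(\abs{D_n}/\bb t_n^*)$ error terms against the decay $\alpha^R(\bb\gamma_n)=o(\bb t_n^*/\abs{D_n})$ is automatic. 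The genuinely subtle case is $(R_v)=(Z_v)$, since $(Z_v)$ need not be strongly mixing in general: there I would restrict attention to the situations in which the statement is actually invoked for $(Z_v)$ --- in which $(Z_v)$ is strongly mixing --- or else deduce the $(Z_v)$-version from the $(Z_v^t)$-version using the tail comparison already available in this appendix.
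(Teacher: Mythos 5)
For $(R_v)=(Z_v^t)$ with $t$ fixed, your argument is correct and is exactly what the paper does: the telescoping bound $\abs{\PP(\cap_i E_i)-\prod_i\PP(E_i)}\le(\abs{P_n}-1)\,\alpha^t(\bb\gamma_n^t)$ (valid because $\bb\gamma_n$-separation is a pointwise condition on pairs of points and hence passes from each $H_{z_i}^n$ to their union), the termwise bound for \eqref{eq:approxmixing2}, and the identification $\abs{P_n}\sim\abs{Q_n}\sim\abs{D_n}/\bb t_n^*$ that converts Assumption~\ref{ass:randomfield}\ref{eq:fieldass2} into $\abs{P_n}\,\alpha^t(\bb\gamma_n^t)\to 0$.

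The genuine gap is the case $(R_v)=(Z_v)$, which you defer at the end but which is the actual content of the lemma: Assumption~\ref{ass:randomfield} imposes no mixing on $(Z_v)$ itself, and the lemma \emph{is} invoked with $(R_v)=(Z_v)$ (Lemmas~\ref{lem:maxvsJkasse} and \ref{lem:JsetandAset} feed into Corollary~\ref{cor:maxtail2}, whose statement is about $(Z_v)$, not $(Z_v^t)$). Your first fallback --- restricting to situations where $(Z_v)$ happens to be strongly mixing --- proves a weaker statement and breaks that downstream use. Your second fallback is the right idea but is not a one-line tail comparison: replacing the events $\{M_{yZ}(\Hz)\le a_nx\}$ by $\{M_{yZ^t}(\Hz)\le a_nx\}$ for a \emph{fixed} $t$ costs $\delta(t,n)=\sum_{v\in D_n^+}\bigl[\PP(y_vZ_v^t>a_nx,\,y_vZ_v\le a_nx)+\PP(y_vZ_v>a_nx,\,y_vZ_v^t\le a_nx)\bigr]$, which by \eqref{eq:ratiobound2} only vanishes in the iterated limit $\limsup_t\limsup_n$, not for fixed $t$ as $n\to\infty$. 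The paper therefore runs a diagonalization: it chooses increasing thresholds $N_t^1,N_t^2,N_t^3$ so that for $n\ge N_t^i$ one has respectively $\bb\gamma_n^t/\bb t_n\le 1/t$, $\abs{P_n}\alpha^t(\bb\gamma_n^t)\le 1/t$ and $\delta(t,n)\le 1/t$, extracts from these a sequence $t_n\to\infty$ along which all three quantities vanish simultaneously, and only then sets $\bb\gamma_n:=\bb\gamma_n^{t_n}$ and bounds the left side of \eqref{eq:approxmixing1} by $2\,\delta(t_n,n)+(\abs{P_n}-1)\,\alpha^{t_n}(\bb\gamma_n^{t_n})$. Note in particular that the separating sequence $\bb\gamma_n$ whose existence the lemma asserts is itself a product of this diagonal construction --- it cannot be ``the analogous sequence coming from the mixing of $(Z_v)$'' as in your first paragraph, since no such mixing is assumed. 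Without this construction the proof is incomplete.
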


\begin{proof}
If $(R_v)=(Z_v^t)$ for some $t\in\NN$ both results follow easily from Assumption~\ref{ass:randomfield}\ref{eq:fieldass2}.

The results for $(R_v)=(Z_v)$ follow by appropriately approximating $(Z_v)$ with $(Z_v^t)$ and utilizing its strong mixing.
Let $n$ be given and let for each $t\in\NN$ the function $\alpha^t(\cdot)$ denote the strong mixing rate of $(Z_v^t)$. According to Assumption~\ref{ass:randomfield}\ref{eq:fieldass2}, let $\bb \gamma^t_n\to \infty$ satisfy $\bb \gamma_n^t=o(\bb t_n)$ and define 
\[
	\alpha^t_n : = \alpha^t(\bb \gamma_n^t)
\]
which satisfies $\abs{P_n} \alpha_n^t \to 0$. Moreover, let
\[
	\delta(t,n)
	= \sum_{v\in D_n^+} \PP(y_v Z_v^t>a_n x,\, y_v Z_v \le a_n x )
	+ \sum_{v\in D_n^+} \PP(y_v Z_v>a_n x,\, y_v Z_v^t \le a_n x) ,
\]
which, due to \eqref{eq:ratiobound2}, satisfies
\[
	\limsup_{t\to\infty} \limsup_{n\to\infty}\delta(t,n) = 0 .
\]

Let $(N_t^1)_{t\in\NN},\ (N_t^2)_{t\in\NN}$ and $(N_t^3)_{t\in\NN}$ be increasing sequences chosen such that
\begin{equation}\label{eq:approxmixinproof1}
\begin{aligned}
	\bb \gamma_n^{t}/ \bb t_n & \le 1/{t} \qquad \forall n\ge N_t^1, \\
	\abs{P_n} \alpha_n^t & \le 1/{t} \qquad \forall n\ge N_t^2,\\
	\delta(t,n) & \le 1/{t} \qquad \forall n\ge N_t^3.
\end{aligned}
\end{equation}
Using these, define
\[
	r_n^i = \sum_{t=1}^\infty t \I{[N_{t}^i,N_{t+1}^i)}(n)
	\qquad \text{for }i=1,2,3 ,
\]
and $r_n = \min\{r_n^1,r_n^2,r_n^3\}\to\infty$. Then
\[
	N_{t_n}^i 
	\le N_{r_n}^i
	\le N_{r_n^i}^i
	\le n
\]
for all sequences $(t_n)_{n\in\NN}$ where $t_n\le r_n$ (not to be confused with the vector $\bb t_n$). In particular, all the inequalities in \eqref{eq:approxmixinproof1} are satisfied for all $n$ if $t$ is replaced by $t_n$ such that $t_n\le r_n$. Hence, for all such $(t_n)$,
\begin{equation}\label{eq:approxmixinproof2}
	\bb \gamma_n^{t_n} / \bb t_n \to 0,
	\qquad
	\abs{P_n} \alpha_n^{t_n} \to 0,
	\qquad
	\delta(t_n,n) \to 0
\end{equation}
as $n\to\infty$. Now let such a sequence be fixed and define
\[
	\bb \gamma_n = \bb \gamma_n^{t_n}
	\quad\text{and}\quad
	\alpha_n = \alpha_n^{t_n} = \alpha^{t_n}(\bb \gamma_n^{t_n}) .
\]
This $\bb \gamma_n$ fulfills the requirement of the lemma.

Fix $n\in\NN$ and let the sets $\Hz$ appearing in \eqref{eq:approxmixing1} be $\bb \gamma_n$-separated. Using the strong mixing of $(Z_v^{t_n})$ recursively (in the last inequality), we find that
\begin{align*}
	\MoveEqLeft	
	\abs[\Big]{
	\PP \Bigl(
		\bigcap_{z\in P_n} \bigl\{
		M_{yZ}(\Hz) \le a_n x
		\bigr\}
	\Bigr) - 
	\prod_{z\in P_n} 
	\PP \bigl( M_{yZ}(\Hz) \le a_n x \bigr)} \\ 
	\le \,& 
	\abs[\Big]{
	\PP \Bigl(
		\bigcap_{z\in P_n} \bigl\{
		M_{yZ}(\Hz) \le a_n x
		\bigr\}
	\Bigr) - 
	\PP \Bigl(
		\bigcap_{z\in P_n} \bigl\{
		M_{yZ^{t_n}}(\Hz) \le a_n x
		\bigr\}
	\Bigr)} \\ &
	+ 
	\abs[\Big]{
	\prod_{z\in P_n} 
	\PP \bigl( M_{yZ}(\Hz) \le a_n x \bigr) - 
	\prod_{z\in P_n} 
	\PP \bigl( M_{yZ^{t_n}}(\Hz) \le a_n x \bigr)}\\ &
	+ 
	\abs[\Big]{
	\PP \Bigl(
		\bigcap_{z\in P_n} \bigl\{
		M_{yZ^{t_n}}(\Hz) \le a_n x
		\bigr\}
	\Bigr) - 
	\prod_{z\in P_n} 
	\PP \bigl( M_{yZ^{t_n}}(\Hz) \le a_n x \bigr)}\\ 
	\le &\,
	2\, \delta(n,t_n)+ \bigl(\abs{P_n}-1\bigr)\alpha_n,
\end{align*}
which converges to $0$ as $n\to\infty$ by \eqref{eq:approxmixinproof2}. This shows \eqref{eq:approxmixing1}.

The convergence \eqref{eq:approxmixing2} follows by similar arguments as above approximating $(Z_v)$ with $(Z_v^t)$ and utilizing that $\abs{P_n} \alpha_n \to 0$ as $n\to\infty$.
\end{proof}

\begin{lemma}\label{lem:maxvsJkasse}
Let $(R_v)$ be either $(Z_v)$ or $(Z_v^t)$ for some fixed $t\in\NN$. For almost all realizations $(y_v)$ of $(Y_v)$,
\begin{equation}\label{eq:maxprobequality}
	\PP\bigl(\max_{v \in D_n} y_v R_v \le a_n x \bigr)  
	=
	\exp\Bigl(-\sum_{z\in \Pn}\PP \bigl( M_{yR}(\Jz) > a_n x \bigr) \Bigr) + o(1)
\end{equation}
as $n\to\infty$. In particular,
\begin{equation}\label{eq:maxprobinequality}
	\PP\bigl(\max_{v \in D_n} y_v R_v \le a_n x \bigr)  
	\ge
	\exp\Bigl(-\sum_{v\in D_n}\PP \bigl( y_v R_v > a_n x \bigr) \Bigr) + o(1) .
\end{equation}
\end{lemma}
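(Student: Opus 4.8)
The plan is to sandwich $D_n$ between its inner and outer box-approximations $\dnm=\bigcup_{z\in\Pn}\Jz$ and $\dnp=\bigcup_{z\in\Qn}\Jz$, to replace each approximating box $\Jz$ by a slightly shrunk, well-separated sub-box so that the approximate independence of Lemma~\ref{lem:approxmixing} applies, and then to convert the resulting product of probabilities into the exponential of a sum using the power series of $\log(1-p)$ together with Lemma~\ref{lem:expappr0}. Fix once and for all a realization $(y_v)$ of $(Y_v)$ in the full-probability set on which Lemmas~\ref{lem:ergodictheorem}, \ref{lem:tailapprox} and \ref{lem:expappr0} hold and on which $\EE^{\bb{Y}}[\abs{\hat{Y}_0}^\gamma\mid\hat{\Ii}]((y_v))<\infty$. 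Since $\dnm\subseteq D_n\subseteq\dnp$,
\[
	\PP\bigl(M_{yR}(\dnp)\le a_nx\bigr)\le\PP\bigl(\max_{v\in D_n}y_vR_v\le a_nx\bigr)\le\PP\bigl(M_{yR}(\dnm)\le a_nx\bigr),
\]
so it suffices to show that both outer probabilities equal $\exp\bigl(-\sum_{z\in\Pn}\PP(M_{yR}(\Jz)>a_nx)\bigr)+o(1)$. I carry this out for $\dnm$; the argument for $\dnp$ is verbatim with $\Qn$ in place of $\Pn$, followed by one application of \eqref{eq:sumapprox2} of Lemma~\ref{lem:tailapprox} to pass from $\sum_{z\in\Qn}$ back to $\sum_{z\in\Pn}$.

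Let $\bb \gamma_n\to\infty$ with $\bb \gamma_n=o(\bb t_n)$ be the sequence furnished by Lemma~\ref{lem:approxmixing}, and for $z\in\Pn$ let $\Hz\subseteq\Jz$ be the sub-box obtained by deleting from $\Jz$ a margin of width $\lceil\gamma_{n,\ell}\rceil+1$ along each upper face in coordinate $\ell$. Then the sets $\Hz$, $z\in\Pn$, are pairwise $\bb \gamma_n$-separated, and $\abs{\Hz}\sim\abs{\Jz}$ because $\bb \gamma_n=o(\bb t_n)$. Since $\{M_{yR}(\dnm)\le a_nx\}\subseteq\bigcap_{z\in\Pn}\{M_{yR}(\Hz)\le a_nx\}$ with the difference of these events contained in $\bigcup_{z\in\Pn}\{M_{yR}(\Jz\setminus\Hz)>a_nx\}$, Lemma~\ref{lem:tailapprox} eq.~\eqref{eq:subsetofJz} gives $\PP(M_{yR}(\dnm)\le a_nx)=\PP\bigl(\bigcap_{z\in\Pn}\{M_{yR}(\Hz)\le a_nx\}\bigr)+o(1)$, and Lemma~\ref{lem:approxmixing} eq.~\eqref{eq:approxmixing1} turns the latter into $\prod_{z\in\Pn}\PP(M_{yR}(\Hz)\le a_nx)+o(1)$. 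Writing $p_z^n=\PP(M_{yR}(\Hz)>a_nx)\le\sum_{v\in\Jz}\PP(y_vR_v>a_nx)$, it remains to prove $\prod_{z\in\Pn}(1-p_z^n)=\exp\bigl(-\sum_{z\in\Pn}p_z^n\bigr)+o(1)$ and to replace $p_z^n$ by $\PP(M_{yR}(\Jz)>a_nx)$ inside the exponential, the latter costing only $o(1)$ again by \eqref{eq:subsetofJz} and the elementary bound $\abs{e^{-a}-e^{-b}}\le\abs{a-b}$ for $a,b\ge0$.

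For the exponential approximation, choose $b\ge1$ large enough (possible for the fixed realization by integrability of $\abs{Y_0}^\gamma$) that $C_0x^{-\alpha}\EE^{\bb{Y}}[\abs{\hat{Y}_0}^\gamma\I{\{\abs{\hat{Y}_0}>b\}}\mid\hat{\Ii}]((y_v))<\tfrac12$, with $C_0$ from \eqref{eq:ratioboundapp}. Splitting $\sum_{v\in\Jz}\PP(y_vR_v>a_nx)$ according to $\abs{y_v}\le b$ and $\abs{y_v}>b$, the first part is at most $\abs{\Jz}\,\PP(\abs{R_0}>a_nx/b)$, which is $o(1)$ uniformly in $z$ since $\bb t_n^*=o(\abs{D_n})$ by Assumption~\ref{ass:geometry} and $\abs{D_n}\PP(\abs{R_0}>a_nx/b)$ stays bounded, while the second part is at most $\tfrac{C_0x^{-\alpha}}{\abs{D_n}}\sum_{v\in\dnm}\abs{y_v}^\gamma\I{\{\abs{y_v}>b\}}$ by \eqref{eq:ratioboundapp}, whose $\limsup_n$ is $<\tfrac12$ by Lemma~\ref{lem:ergodictheorem}. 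Hence $\sup_{z\in\Pn}\sum_{v\in\Jz}\PP(y_vR_v>a_nx)<1$ for all large $n$; in particular $p_z^n<1$, so $\log(1-p_z^n)=-p_z^n-\sum_{j\ge2}(p_z^n)^j/j$ and
\[
	\Bigl|\sum_{z\in\Pn}\log(1-p_z^n)+\sum_{z\in\Pn}p_z^n\Bigr|\le\sum_{z\in\Pn}\sum_{j\ge2}\frac{1}{j}\Bigl(\sum_{v\in\Jz}\PP(y_vR_v>a_nx)\Bigr)^j\longrightarrow0
\]
by Lemma~\ref{lem:expappr0}. Exponentiating gives $\prod_{z\in\Pn}(1-p_z^n)=\exp\bigl(-\sum_{z\in\Pn}p_z^n\bigr)+o(1)$, and combining this with the previous paragraph proves \eqref{eq:maxprobequality}.

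I expect the genuinely delicate point to be this last step: making the logarithmic expansion legitimate requires the uniform bound $\sup_{z\in\Pn}\sum_{v\in\Jz}\PP(y_vR_v>a_nx)<1$, and it is precisely here that the heavy, possibly non-ergodic weights $y_v$ force a combination of the ratio bound \eqref{eq:ratioboundapp} with the spatial ergodic theorem (Lemma~\ref{lem:ergodictheorem}) and the truncation level $b$; everything else is bookkeeping assembled from Lemmas~\ref{lem:tailapprox}, \ref{lem:approxmixing}, \ref{lem:expappr0} and the sandwich. Finally, \eqref{eq:maxprobinequality} is immediate from \eqref{eq:maxprobequality}: since $\sum_{z\in\Pn}\PP(M_{yR}(\Jz)>a_nx)\le\sum_{z\in\Pn}\sum_{v\in\Jz}\PP(y_vR_v>a_nx)=\sum_{v\in\dnm}\PP(y_vR_v>a_nx)\le\sum_{v\in D_n}\PP(y_vR_v>a_nx)$, we get $\exp\bigl(-\sum_{v\in D_n}\PP(y_vR_v>a_nx)\bigr)\le\exp\bigl(-\sum_{z\in\Pn}\PP(M_{yR}(\Jz)>a_nx)\bigr)$, whence the inequality follows.
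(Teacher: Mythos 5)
Your proof is correct and follows essentially the same route as the paper's: sandwiching $D_n$ between $\dnm$ and $\dnp$, shrinking each $\Jz$ to a $\bb\gamma_n$-separated sub-box so that Lemma~\ref{lem:approxmixing} applies, controlling the discarded margins via \eqref{eq:subsetofJz}, and converting the product to an exponential through the $\log(1-p)$ expansion and Lemma~\ref{lem:expappr0}. Your derivation of \eqref{eq:maxprobinequality} by Boole's inequality and monotonicity of the sums over $\Pn\mapsto\dnm\mapsto D_n$ is a slight (valid) shortcut compared with the paper's additional appeal to \eqref{eq:sumapprox1}, and your explicit verification that $\sup_{z\in\Pn}\sum_{v\in\Jz}\PP(y_vR_v>a_nx)<1$ for large $n$ correctly justifies the logarithmic expansion.
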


\begin{proof}
For any $z\in\Zd$ we define $\Hz \subseteq\Jz$ to be the subset of $\Jz$ with distance $\bb \gamma_n$ to the ``right'' boundary,
\[
	\Hz
	= \bigl\{
		u\in\Zd\::\:
		z_\ell t_{n,\ell} \le u_\ell \le 
		(z_\ell+1) t_{n,\ell} - 1 - \gamma_{n,\ell},\ 
		\text{for all }\ell=1,\dots,d
	\bigr\}, 
\]
where $\bb \gamma_n$ is the vector given in Lemma~\ref{lem:approxmixing}.
It will be used shortly that $\Hz$ is asymptotically of the same size as $\Jz$, i.e. $\abs{\Hz}\sim\abs{\Jz}$.
First we see that
\begin{align*}
	0 & 
	\le 
	\PP \Bigl(
		\bigcap_{z\in P_n} \bigl\{
		M_{yR}(\Hz) \le a_n x
		\bigr\}
	\Bigr) - \PP\bigl( M_{yR}(\dnm) \le a_n x \bigr) \\ &
	\le \PP \Bigl(
		\bigcup_{z\in P_n} \bigl\{
		M_{yR}(\Jz\setminus \Hz) > a_n x
		\bigr\}
	\Bigr)	\\ &
	\le 
	\sum_{z\in P_n}
	\PP \bigl( M_{yR}(\Jz \setminus \Hz) > a_n x \bigr).
\end{align*}
Secondly, using \eqref{eq:approxmixing1} above,
\begin{align*}
	\abs[\Big]{
	\PP \Bigl(
		\bigcap_{z\in P_n} \bigl\{
		M_{yR}(\Hz) \le a_n x
		\bigr\}
	\Bigr) - 
	\prod_{z\in P_n} 
	\PP \bigl( M_{yR}(\Hz) \le a_n x \bigr)}
	\to 0
\end{align*}
as $n\to\infty$. As
\begin{align*}
	0 &
	\le \prod_{z\in P_n} 
	\PP \bigl( M_{yR}(\Hz) \le a_n x \bigr)
	-
	\prod_{z\in P_n} 
	\PP \bigl( M_{yR}(\Jz) \le a_n x \bigr) \\ &
	\le 
	\sum_{z\in P_n}
	\PP \bigl( M_{yR}(\Jz \setminus \Hz) > a_n x \bigr) ,
\end{align*}
which tends to $0$ by Lemma~\ref{lem:tailapprox}, these considerations show that
\[
	\PP\bigl( M_{yR}(\dnm) \le a_n x \bigr) = 
	\prod_{z\in P_n} 
	\PP \bigl( M_{yR}(\Jz) \le a_n x \bigr) + o(1) .
\]
Note that this is also true with $P_n$ replaced by $Q_n$ and $\dnm$ replaced by $\dnp$, respectively. Consequently, as $\dnm \subseteq D_n \subseteq \dnp$, this implies that
\begin{align*}
	\prod_{z\in \Qn}\PP \bigl( M_{yR}(\Jz) \le a_n x \bigr) + o(1) &
	\le 
	\PP\bigl(\max_{v \in D_n} y_vR_v \le a_n x \bigr)  \\ &
	\le
	\prod_{z\in \Pn}\PP \bigl( M_{yR}(\Jz) \le a_n x \bigr) + o(1) 
\end{align*}
as $n\to\infty$. Utilizing a series representation of the logarithm in combination with Lemma~\ref{lem:expappr0}, we obtain
\begin{align*}
	\MoveEqLeft
	\prod_{z\in \Pn}\PP \bigl( M_{yR}(\Jz) \le a_n x \bigr) \\ &
	= \exp \Bigl(
		\sum_{z\in P_n} \log \bigl( 1-\PP(M_{y R}(\Jz) > a_n x)
		\bigr)
	\Bigr) \\ &
	= \exp \Bigl(-
		\sum_{z\in P_n} \PP(M_{yR}(\Jz) > a_n x)
		- \sum_{z\in\Pn} \sum_{j=2}^\infty \frac{1}{j} 
	\bigl(\PP(M_{yR}(\Jz) > a_n x)
	\bigr)^j
		\Bigr) \\ &
		= \exp \Bigl(-
		\sum_{z\in P_n} \PP(M_{yR}(\Jz) > a_n x) \Bigr) + o(1)
\end{align*}
as $n\to\infty$, and again similarly for $Q_n$. Due to \eqref{eq:sumapprox2}, this shows \eqref{eq:maxprobequality}.

The inequality \eqref{eq:maxprobinequality} is a consequence of \eqref{eq:maxprobequality}, Boole's inequality and \eqref{eq:sumapprox1}.
\end{proof}

\begin{lemma}\label{lem:JsetandAset}
Let $(R_v)$ be either $(Z_v)$ or $(Z_v^t)$ for some fixed $t\in\NN$. For almost all realizations $(y_v)$ of $(Y_v)$,
\begin{equation}\label{eq:maxformula1}
	\sum_{z\in\Pn} \PP \bigl( M_{yR}(\Jz) > a_n x \bigr)
	= \sum_{v\in D_n} \PP \bigl( M_{yR}(\Av) \le a_n x < y_v R_v \bigr) + o(1)
\end{equation}
as $n\to\infty$. In particular,
\begin{equation}\label{eq:maxformula2}
	\PP\bigl(\max_{v \in D_n} y_v R_v \le a_n x \bigr)  
	=
	\exp\Bigl(-\sum_{v\in D_n} \PP \bigl( M_{yR}(\Av) \le a_n x < y_v R_v \bigr) \Bigr) + o(1)
\end{equation}
as $n\to\infty$.
\end{lemma}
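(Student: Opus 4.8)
The plan is to prove \eqref{eq:maxformula1} first, since \eqref{eq:maxformula2} then follows immediately by substituting \eqref{eq:maxformula1} into \eqref{eq:maxprobequality} of Lemma~\ref{lem:maxvsJkasse}. The starting point for \eqref{eq:maxformula1} is the combinatorial identity, valid for each fixed box $\Jz$ and each realization $(y_v)$: the event $\{M_{yR}(\Jz) > a_n x\}$ can be decomposed according to the $\preceq$-largest index $v\in\Jz$ at which the exceedance $y_v R_v > a_n x$ occurs, giving
\[
	\PP\bigl(M_{yR}(\Jz) > a_n x\bigr)
	= \sum_{v\in\Jz} \PP\bigl( M_{yR}(\Av\cap\Jz) \le a_n x < y_v R_v \bigr) .
\]
Summing over $z\in\Pn$ and noting that $\bigcup_{z\in\Pn}\Jz = \dnm$ (a disjoint union), the right-hand side becomes $\sum_{v\in\dnm} \PP\bigl( M_{yR}(\Av\cap\Jz(v)) \le a_n x < y_v R_v \bigr)$, where $\Jz(v)$ is the unique box containing $v$. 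So \eqref{eq:maxformula1} reduces to showing two things: that replacing $\Av\cap\Jz(v)$ by the full $\Av$ introduces only a vanishing error, and that replacing the sum over $\dnm$ by the sum over $D_n$ is also negligible.

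The second of these is the easier point: the difference between summing over $D_n$ and over $\dnm$ is controlled by $\sum_{v\in\dnp\setminus\dnm}\PP(y_v R_v > a_n x)$, which tends to $0$ by \eqref{eq:sumapprox1} of Lemma~\ref{lem:tailapprox} (using $\abs{\dnp\setminus\dnm} = o(\abs{D_n})$ together with the ratio bound \eqref{eq:ratioboundapp} and the ergodic averaging of Lemma~\ref{lem:ergodictheorem}). The first point is where the main work lies. The error from enlarging $\Av\cap\Jz(v)$ to $\Av$ is bounded by
\[
	\sum_{v\in\dnm} \PP\bigl( M_{yR}(\Av\setminus\Jz(v)) > a_n x,\ y_v R_v > a_n x \bigr) .
\]
The set $\Av\setminus\Jz(v)$ lives in boxes neighboring $\Jz(v)$, so (after discarding indices too close to the box boundary, using Lemma~\ref{lem:tailapprox} with $H_z^n$ the $\bb\gamma_n$-interior of $\Jz$, exactly as in the proof of Lemma~\ref{lem:maxvsJkasse}) one can bound this by a sum over $z\in\Pn$ of terms $\PP\bigl(M_{yR}(\Hz)>a_n x,\ M_{yR}(B_z^n)>a_n x\bigr)$ with $B_z^n\subseteq\overline B_z^n$. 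Applying the approximate-mixing estimate \eqref{eq:approxmixing2} of Lemma~\ref{lem:approxmixing}, this is asymptotically the same as $\sum_{z\in\Pn}\PP\bigl(M_{yR}(\Hz)>a_n x\bigr)\PP\bigl(M_{yR}(B_z^n)>a_n x\bigr)$, and each factor of the product is $O(\abs{\Jz}/\abs{D_n})$ after the appropriate ergodic averaging, so the whole product sum is $O\bigl(\abs{P_n}(\abs{\Jz}/\abs{D_n})^2\bigr) = O(\abs{\Jz}/\abs{D_n}) = o(1)$ since $\bb t_n^* = o(\abs{D_n})$.

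The main obstacle I expect is the bookkeeping around the anti-clustering within a single box: the bound $\sum_{v\in\Jz}\PP(M_{yR}(\Av\setminus\Jz)>a_nx,\,y_vR_v>a_nx)$ must be split between indices near the boundary of $\Jz$ (handled by the $\bb\gamma_n$-trimming and Lemma~\ref{lem:tailapprox}) and interior indices (handled by \eqref{eq:approxmixing2}), and one must be careful that $\Av\setminus\Jz(v)$ for an interior $v$ really is $\bb\gamma_n$-separated from $\Hz$ — which it is, by construction of $\Hz$ as the $\bb\gamma_n$-interior. Since $\bb\gamma_n = o(\bb t_n)$, trimming the boundary loses only a vanishing fraction of each box, so the decomposition goes through. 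Modulo these routine geometric estimates, \eqref{eq:maxformula1} follows, and then \eqref{eq:maxformula2} is immediate from Lemma~\ref{lem:maxvsJkasse}.
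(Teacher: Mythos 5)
Your overall architecture is the same as the paper's: decompose $\{M_{yR}(\Jz)>a_nx\}$ by the $\preceq$-maximal exceedance index, note that $\{u\in\Jz: v\prec u\}=\Av\cap\Jz$, control the passage from $\dnm$ to $D_n$ via \eqref{eq:sumapprox1}, and kill the enlargement error using the approximate mixing \eqref{eq:approxmixing2} plus ergodic averaging. The deduction of \eqref{eq:maxformula2} from \eqref{eq:maxformula1} and Lemma~\ref{lem:maxvsJkasse} is also exactly the paper's. But there is one genuine gap in the middle step. You bound the enlargement error by the per-index sum
\[
	\sum_{v\in\Hz}\PP\bigl(M_{yR}(\Av\setminus\Jz)>a_nx,\ y_vR_v>a_nx\bigr)
\]
and then claim this can be bounded by a \emph{single} probability $\PP(M_{yR}(\Hz)>a_nx,\,M_{yR}(B_z^n)>a_nx)$ per box. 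That reduction is false as stated: the events $\{y_vR_v>a_nx\}$, $v\in\Hz$, are not disjoint, so the sum can exceed the probability of any containing event by a factor of order $\abs{\Hz}$ (think of all indices in the box exceeding simultaneously). The alternative of applying the mixing estimate term-by-term over $v$ also fails under the paper's hypotheses: it accumulates a mixing error of order $\abs{D_n}\,\alpha_n$, whereas Assumption~\ref{ass:randomfield}\ref{eq:fieldass2} only controls $\abs{D_n}\,\alpha_n/\bb t_n^*=\abs{P_n}\,\alpha_n$, i.e.\ one mixing term per box, not per index.

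The repair is already implicit in your first display: keep the disjointness. The error events
\[
	\bigl\{M_{yR}(\Av\cap\Jz)\le a_nx<y_vR_v\bigr\}\cap\bigl\{M_{yR}(\Av\setminus\Jz)>a_nx\bigr\},\qquad v\in\Jz,
\]
are pairwise disjoint (their first components partition $\{M_{yR}(\Jz)>a_nx\}$), and each is contained in $\{M_{yR}(\Jz)>a_nx\}\cap\{M_{yR}(\overline B_z^n)>a_nx\}$ since $\Av\setminus\Jz\subseteq\overline B_z^n$. Hence the total enlargement error over a box is at most the single probability $\PP(M_{yR}(\Jz)>a_nx,\,M_{yR}(\overline B_z^n)>a_nx)$ — one term per $z\in\Pn$ — which is exactly the quantity the paper then sends to zero by inserting the $\bb\gamma_n$-strips $E_{z,n}^{\pm}$ (handled by Lemma~\ref{lem:tailapprox}) and applying \eqref{eq:approxmixing2} to the trimmed sets, followed by the order-of-magnitude count $\abs{P_n}\cdot O((\abs{\Jz}/\abs{D_n})^2)=o(1)$ that you already have (with the caveat that the ``$O(\abs{\Jz}/\abs{D_n})$ per factor'' must be justified by the $b$-truncation and ergodic averaging of Lemma~\ref{lem:tailapprox} rather than uniformly in $z$). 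With that one correction your argument coincides with the paper's proof.
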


\begin{proof}
Clearly \eqref{eq:maxformula2} follows from \eqref{eq:maxformula1} due to Lemma~\ref{lem:maxvsJkasse}.

For all $z\in\Pn$ we find that
\begin{align*}
	\PP \bigl( M_{yR}(\Jz) > a_n x \bigr) &
	= \sum_{v \in \Jz} \PP \Bigl( M_{yR}\big(\{u \in \Jz:v \prec u \}\big) \le a_n x < y_v R_v \Bigr) \\ &
	\ge \sum_{v \in \Jz} \PP \bigl( M_{yR}(\Av) \le a_n x < y_v R_v \bigr) .
\end{align*}
Summing over $z\in P_n$ this implies that
\begin{align*}
	\sum_{z\in P_n} \PP \bigl( M_{yR}(\Jz) > a_n x \bigr) &
	\ge \sum_{v\in \dnm}\PP \bigl( M_{yR}(\Av) \le a_n x < y_vR_v \bigr) \\ & =
	\sum_{v\in D_n}\PP \bigl( M_{yR}(\Av) \le a_n x < y_vR_v \bigr) + o(1) ,
\end{align*}
where the equality is due to \eqref{eq:sumapprox1} since $\dnm \subseteq D_n \subseteq \dnp$.

To obtain an upper bound, 
we consider the set $ \overline B_z^{n}$ defined in \eqref{eq:overlineBz}. I.e. $ \overline B_z^{n}$ is the union of the $3^d-1$ neighboring $J^n$-boxes of $\Jz$.
Then
\begin{align*}
	\MoveEqLeft
	\PP \bigl( M_{yR}(\Jz) > a_n x,\, M_{yR}(\overline B_z^n) \le a_n x \bigr) \\ & 
	= \sum_{v \in \Jz} 
	\PP \Bigl( M_{yR}\big(\{u \in \Jz:v \prec u \}\big) \le a_n x,\, M_{yR}(\overline B_z^n) \le a_n x< y_v R_v \Bigr) \\ &
	\le \sum_{v \in \Jz} 
	\PP \bigl( M_{yR}(\Av) \le a_n x < y_v R_v \bigr) .
\end{align*}
Hence, summing over $z\in P_n$, we have the upper bound
\begin{align*}
	\MoveEqLeft
	\sum_{z\in P_n} \PP \bigl( M_{yR}(\Jz) > a_n x,\, M_{yR}(\overline B_z^n) \le a_n x \bigr)  \\ &
	\le \sum_{v \in D_n} 
	\PP \bigl( M_{yR}(\Av) \le a_n x < y_v R_v \bigr) + o(1)
\end{align*}
as $n\to\infty$. Consequently, the result follows if
\begin{equation*}
	\sum_{z\in\Pn}\PP \bigl( M_{yR}(\Jz) > a_n x,\, M_{yR}(\overline B_z^n) > a_n x \bigr)
	\to 0
\end{equation*}
as $n\to\infty$.

Let $E_{z,n}^+$ and $E_{z,n}^-$ be the discrete strip of size $\bb \gamma_n$ on the outside and inside of $\Jz$, respectively. E.g. 
\[
	E_{z,n}^+ = \{u\in\Zd\setminus \Jz
	\::\:
	z_\ell t_{n,\ell}-\gamma_{n,\ell}
	\le u_\ell
	\le (z_\ell +1) t_{n,\ell} - 1 + \gamma_{n,\ell},\ \text{for all }\ell
	\} .
\]

Using \eqref{eq:approxmixing2} in the second inequality below we find that
\begin{align*}
	\MoveEqLeft	\sum_{z\in P_n}\PP \bigl( M_{yR}(\Jz) > a_n x, M_{yR}(\overline B_z^n) > a_n x \bigr) \\ &
	\le \sum_{z\in P_n}\PP\bigl(M_{yR}(E_{z,n}^+) > a_n x, M_{yR}(E_{z,n}^-) > a_n x \bigr) \\ & \quad
	+ \sum_{z\in P_n}\PP \bigl( M_{yR} (\Jz) > a_n x,\, M_{yR}(\overline B_z^n\setminus E_{z,n}^+) > a_n x \bigr) \\ & \quad
	+ \sum_{z\in P_n}\PP \bigl( M_{yR}(\Jz\setminus E_{z,n}^-) > a_n x,\, M_{yR}(\overline B_z^n) > a_n x \bigr) \\ &
	\le \sum_{z\in P_n} \PP\bigl(M_{yR}(E_{z,n}^-) > a_n x \bigr) \\ & \quad
	+ 2 \sum_{z\in P_n} \PP \bigl( M_{yR}(\Jz) > a_n x \bigr)\,
	\PP \bigl( M_{yR}(\overline B_z^n) > a_n x \bigr) + o(1) .
\end{align*}
The first term tends to $0$ by Lemma~\ref{lem:tailapprox}. Since $\abs{\overline B_z^n}=(3^d-1)\abs{\Jz}$, the second term can also be seen to converge to $0$ using Lemma~\ref{lem:ergodictheorem} of Appendix~\ref{app:ergodictheorem} and arguments similar to those in the proof of Lemma~\ref{lem:tailapprox} thus concluding the proof.
\end{proof}

\end{appendix}

\bibliographystyle{imsart-number} 

\end{document}